\theoremstyle{definition}
\numberwithin{equation}{section}
\newtheorem{theorem}[equation]{Theorem}
\newtheorem{lemma}[equation]{Lemma}
\newtheorem{proposition}[equation]{Proposition}
\newtheorem{corollary}[equation]{Corollary}
\newtheorem{definition}[equation]{Definition}
\newtheorem{example}[equation]{Example}
\newtheorem{remark}[equation]{Remark}
\newcommand{\Rep}{\operatorname{\mathsf{Rep}}}
\newcommand{\KK}{\mathbb{K}}
\newcommand{\ZZ}{\mathbb{Z}}
\newcommand{\NN}{\mathbb{N}}
\newcommand{\OO}{\mathcal{O}}
\newcommand{\Hom}{\operatorname{Hom}}
\newcommand{\Ind}{\operatorname{Ind}}
\newcommand{\Ext}{\operatorname{Ext}}
\newcommand\id{\operatorname{id}}
\newcommand\gr{\operatorname{gr}}
\newcommand\Spec{\operatorname{Spec}}
\newcommand\hp{\hphantom{x}}
\newcommand{\Par}{\operatorname{Par}}
\newcommand{\Stab}{\operatorname{Stab}}
\newcommand\F{\operatorname{F}}
\newcommand\J{\mathcal{J}}
\newcommand\N{\NN}
\newcommand\Z{\ZZ}
\newcommand\Prim{\operatorname{Pr}}
\newcommand\VA{\operatorname{V}}
\newcommand\m{\mathfrak{m}}
\newcommand\W{{\textbf{W}}}
\newcommand\K{\mathbb K}
\newcommand\ad{\operatorname{ad}}
\newcommand\Dim{\operatorname{Dim}}
\begin{document}

\title{Poisson traces and $D$-modules on Poisson varieties}

\author{Pavel Etingof and Travis Schedler \\
\\
with an appendix by Ivan Losev}
\date{April 10, 2009}
\maketitle
\begin{abstract} 
  To every Poisson algebraic variety $X$ over an algebraically closed
  field of characteristic zero, we canonically attach a right
  $D$-module $M(X)$ on $X$. If $X$ is affine, solutions of $M(X)$ in
  the space of algebraic distributions on $X$ are Poisson traces on
  $X$, i.e., distributions invariant under Hamiltonian flow. When $X$
  has finitely many symplectic leaves, we prove that $M(X)$ is
  holonomic.  Thus, when $X$ is affine and has finitely many
  symplectic leaves, the space of Poisson traces on $X$ is
  finite-dimensional. More generally, to any morphism $\phi: X \rightarrow Y$ and any
  quasicoherent sheaf of Poisson modules $N$ on $X$, we attach a right
  $D$-module $M_\phi(X, N)$ on $X$, and prove that it is holonomic if
  $X$ has finitely many symplectic leaves, $\phi$ is finite, and $N$
  is coherent.  
  
  As an application, we deduce that noncommutative filtered algebras,
  for which the associated graded algebra is finite over its center whose
  spectrum has finitely many symplectic leaves, have finitely many
  irreducible finite-dimensional representations. The appendix, by
  Ivan Losev, strengthens this to show that in such algebras, there
  are finitely many prime ideals, and they are all primitive. This
  includes symplectic reflection algebras.

  Furthermore, we describe explicitly (in the settings of affine
  varieties and compact $C^\infty$-manifolds) the finite-dimensional
  space of Poisson traces on $X$ when $X=V/G$, where $V$ is symplectic
  and $G$ is a finite group acting faithfully on $V$.
\end{abstract}
\bigskip

\centerline{\textbf{Table of Contents}}

$\hspace{15mm}${\small \parbox[t]{135mm}{
\noindent
\hp1.{ $\;\,$} Introduction\\ \hp2.{ $\;\,$} The
  $D$-module $M_\phi(X)$\\
\hp3.{ $\;\,$} The holonomicity theorem and applications\\
\hp4.{ $\;\,$} The structure of $M(X)$ when $X$ has finitely many
  symplectic leaves\\
\hp A.{ $\;\,$} Appendix: Prime and primitive ideals, by Ivan
  Losev  \\
\hp B.{ $\;\,$} Appendix: Possible generalizations} } \bigskip

\section{Introduction}

Let $A$ be a Poisson algebra. In this paper, we always work over an
algebraically closed field $\mathbb K$ of characteristic zero.  A {\it
  Poisson trace}
on $A$ is a linear functional $T: A\to \mathbb K$ such that
$T(\lbrace{a,b\rbrace})=0$ for all $a$ and $b$ in $A$, i.e., a Lie
algebra character of $A$.  Thus, the space of Poisson traces of $A$
coincides with the zeroth Lie algebra cohomology
$H^0_{\mathrm{Lie}}(A,A^*)$, and is dual to the zeroth Lie algebra homology
$H_0^{\mathrm{Lie}}(A,A)=A/\lbrace{ A,A\rbrace}$, which is also the {\it
  zeroth Poisson homology} $HP_0(A)$.

Unfortunately, in spite of the simplicity of their definition, Poisson
traces remain rather poorly understood.  This is partly because this
definition is non-local with respect to $\Spec(A)$, so the
machinery of algebraic geometry cannot be directly applied.

In this paper, we develop a general theory of $D$-modules on Poisson
varieties, which resolves this issue.  This theory canonically
attaches to every Poisson variety $X$ a certain right $D$-module
$M(X)$ on $X$, which is local with respect to $X$, and whose zeroth
cohomology is $HP_0(\OO_X)$.

More generally, in \S 2 we attach a right $D$-module $M_\phi(X)$ on
$X$ to every Poisson variety $X$ together with a morphism $\phi: X\to
Y$ to another variety $Y$. If $X=Y$ and $\phi=\id$, then
$M_\phi(X)=M(X)$. Still more generally, given a quasicoherent sheaf of
Poisson modules $N$ on $X$ (the notion dates to at least
\cite{RVWsg}; we recall the definition in \S \ref{pmdefs} below), we
define the right $D$-module $M_\phi(X, N)$ on $X$; if $N=\OO_X$, it
coincides with $M_\phi(X)$.

We show that, if $X$ and $Y$ are affine, the underived direct image of
$M_\phi(X)$ under the map of $X$ to a point is
$\OO_X/\lbrace{\OO_Y,\OO_X\rbrace}$; more generally, for $M_\phi(X,N)$ it is
$N/\lbrace{\OO_Y, N\rbrace}$. In particular, $HP_0(\OO_X)$ is the underived
  direct image of $M(X)$.

  Next, in \S 3, we show that, if $X$ has finitely many symplectic
  leaves and $\phi$ is a finite morphism, the $D$-module $M_\phi(X)$
  is holonomic, and so is $M_\phi(X, N)$ for a coherent sheaf of
  Poisson modules $N$. Since the direct image of a holonomic
  $D$-module is holonomic, this implies the following results.

\begin{theorem}\label{th1}
  Let $X$ be an affine Poisson variety, $Y$ be another affine variety,
  and $\phi: X\to Y$ be a morphism. If $X$ has finitely many
  symplectic leaves and $\phi$ is finite, then the space
  $\OO_X/\lbrace{\OO_Y,\OO_X\rbrace}$ is finite-dimensional.
  More generally, $N/\{\OO_Y, N\}$ is finite-dimensional for any 
  coherent sheaf of Poisson modules $N$ over $\OO_X$.
\end{theorem}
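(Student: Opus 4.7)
The plan is to derive Theorem~\ref{th1} immediately from three ingredients already announced in the introduction: the construction in \S2 of the right $D$-module $M_\phi(X, N)$ together with the identification of its underived pushforward to a point as $N/\{\OO_Y, N\}$; the holonomicity theorem of \S3; and the standard preservation of holonomicity under $D$-module direct images.

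First I would invoke the construction of \S2, which produces a right $D$-module $M_\phi(X, N)$ on $X$ whose underived direct image along the structure morphism $\pi : X \to \Spec \K$ is
\[
H^0 \pi_+ M_\phi(X, N) \;\cong\; N/\{\OO_Y, N\}.
\]
Specializing to $N = \OO_X$ recovers the first assertion for $\OO_X/\{\OO_Y, \OO_X\}$, so it suffices to prove finite-dimensionality in the general case.

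Next I would apply the holonomicity result of \S3: under the present hypotheses (namely, $X$ has finitely many symplectic leaves, $\phi$ is finite, and $N$ is coherent), $M_\phi(X, N)$ is holonomic. By Bernstein's theorem on the preservation of holonomicity under direct images, $\pi_+ M_\phi(X, N)$ is a bounded complex whose cohomology sheaves are holonomic $D$-modules on $\Spec \K$, i.e., finite-dimensional $\K$-vector spaces. In particular $H^0 \pi_+ M_\phi(X, N) \cong N/\{\OO_Y, N\}$ is finite-dimensional, which is the desired conclusion.

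The main obstacle is not in this deduction — the corollary is essentially formal — but in the two preceding assertions on which it rests. The substance lies in constructing $M_\phi(X, N)$ with the correct description of its underived pushforward (the task of \S2) and in proving holonomicity under the symplectic-leaf hypothesis (the subject of \S3). Once those ingredients are established, Theorem~\ref{th1} follows by simply pushing to a point.
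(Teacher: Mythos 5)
Your proposal is correct and follows exactly the paper's own route: the paper deduces Theorem \ref{th1} from Theorem \ref{holo} (holonomicity of $M_\phi(X,N)$ when $X$ has finitely many leaves, $\phi$ is finite, and $N$ is coherent), Proposition \ref{hp0modprop} (identification of the underived pushforward to a point with $N/\{\OO_Y,N\}$), and Corollary \ref{fidi} (direct images of holonomic modules are holonomic, hence the pushforward is finite-dimensional). You have correctly located where the real work lies, namely in \S2 and \S3, and the deduction you give is the same formal argument as in the paper.
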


\begin{corollary}\label{co1} 
  If $X$ is an affine Poisson variety which has finitely many
  symplectic leaves, then $HP_0(\OO_X)$ is finite-dimensional.
\end{corollary}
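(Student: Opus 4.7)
The corollary is an immediate specialization of Theorem \ref{th1}. The plan is simply to take $Y = X$ and $\phi = \id: X \to X$ in the theorem. Since the identity morphism is an isomorphism, it is in particular finite, and since $X$ is assumed affine, the hypotheses of Theorem \ref{th1} are satisfied (with the already-given assumption that $X$ has finitely many symplectic leaves).

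With this choice, the space $\OO_X/\{\OO_Y, \OO_X\}$ appearing in the conclusion of Theorem \ref{th1} becomes $\OO_X/\{\OO_X, \OO_X\}$, which is by definition $HP_0(\OO_X)$. Theorem \ref{th1} then asserts that this space is finite-dimensional, which is exactly the statement of the corollary.

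Since this is a one-line deduction from the theorem already stated, there is no substantive obstacle to overcome here; all the real work lies in proving Theorem \ref{th1} (and behind it, the holonomicity of $M(X) = M_{\id}(X)$ established in \S 3). The only minor thing to verify, which is immediate, is that the identity morphism counts as a finite morphism and that our definitions of $HP_0(\OO_X)$ and $\{\OO_X, \OO_X\}$ agree as given in \S 1.
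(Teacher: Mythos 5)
Your proposal is correct and matches the paper's own deduction: the paper likewise obtains Corollary \ref{co1} immediately from Theorem \ref{th1} by specializing to $Y=X$ and $\phi=\id$ (equivalently, using $M(X)=M_{\id}(X)$), so that $\OO_X/\{\OO_X,\OO_X\}=HP_0(\OO_X)$ is finite-dimensional. No gaps; all the substantive work indeed resides in Theorem \ref{th1} and the holonomicity theorem.
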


\begin{corollary}\label{co2}
  Let $X$ be an affine Poisson variety with finitely many symplectic
  leaves (for example, a symplectic variety) and $G$ be a finite group
  of Poisson automorphisms of $X$.  Then,
  $\OO_X/\lbrace{\OO_{X/G},\OO_X\rbrace}$ is finite-dimensional.  In
  particular, the subspace of $G$-invariants, $HP_0(\OO_{X/G})$, is
  finite-dimensional.
\end{corollary}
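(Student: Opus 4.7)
The plan is to derive Corollary \ref{co2} as a direct application of Theorem \ref{th1}, taking $Y=X/G$ and $\phi: X\to X/G$ the quotient map.

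First, I would verify that the quotient map $\phi: X \to X/G$ satisfies the hypotheses of Theorem \ref{th1}. Since $G$ is a finite group acting on an affine variety $X$, the quotient $X/G = \Spec(\OO_X^G)$ is affine, and the inclusion $\OO_X^G \hookrightarrow \OO_X$ makes $\OO_X$ into a finitely generated $\OO_X^G$-module; hence $\phi$ is a finite morphism. The Poisson structure on $X/G$ inherited from $X$ makes $\phi$ a morphism, but for Theorem \ref{th1} we only need $X$ to be Poisson with finitely many symplectic leaves, which is given. Thus $\OO_X/\{\OO_{X/G},\OO_X\}$ is finite-dimensional, which is the first assertion.

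Next, I would identify $HP_0(\OO_{X/G})$ with the $G$-invariants of this space. Since $G$ acts by Poisson automorphisms, the subspace $\{\OO_{X/G},\OO_X\} \subseteq \OO_X$ is $G$-stable, so the quotient $\OO_X/\{\OO_{X/G},\OO_X\}$ carries a $G$-action and taking $G$-invariants is exact (as $|G|$ is invertible in $\KK$). The key computation is that
\[
\{\OO_{X/G},\OO_X\}^G = \{\OO_{X/G},\OO_X^G\} = \{\OO_{X/G},\OO_{X/G}\}.
\]
The inclusion ``$\supseteq$'' is clear, while ``$\subseteq$'' follows from averaging: for $f \in \OO_{X/G}$ and $g \in \OO_X$, we have
\[
\frac{1}{|G|}\sum_{\sigma \in G} \sigma\bigl(\{f,g\}\bigr) = \frac{1}{|G|}\sum_{\sigma\in G}\{f,\sigma(g)\} = \bigl\{f,\tfrac{1}{|G|}\sum_\sigma \sigma(g)\bigr\},
\]
with the averaged argument lying in $\OO_X^G = \OO_{X/G}$. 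Taking $G$-invariants of the short exact sequence $0 \to \{\OO_{X/G},\OO_X\} \to \OO_X \to \OO_X/\{\OO_{X/G},\OO_X\} \to 0$ therefore yields
\[
\bigl(\OO_X/\{\OO_{X/G},\OO_X\}\bigr)^G \;\cong\; \OO_{X/G}/\{\OO_{X/G},\OO_{X/G}\} \;=\; HP_0(\OO_{X/G}),
\]
and the right-hand side is finite-dimensional as a subspace of a finite-dimensional space.

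There is no serious obstacle: the content is Theorem \ref{th1}. The only point requiring a small argument is the invariant-theoretic identification above, but this is routine in characteristic zero with $|G|$ invertible.
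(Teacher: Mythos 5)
Your proposal is correct and follows exactly the paper's route: Corollary \ref{co2} is deduced from Theorem \ref{th1} by taking $Y=X/G$ and $\phi:X\to X/G$ the projection, with your averaging argument simply making explicit the identification of $HP_0(\OO_{X/G})$ with the $G$-invariants that the paper leaves implicit. (The paper also remarks that the last statement follows alternatively from Corollary \ref{co1}, since $X/G$ again has finitely many symplectic leaves.)
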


Note that the last statement of Corollary \ref{co2} also follows from
Corollary \ref{co1}, since, if $X$ has finitely many symplectic
leaves, so does $X/G$.

Corollary \ref{co2} in the case when $X$ is a symplectic vector space
$V$ and $G$ acts linearly was proved in \cite[\S 7]{BEG} (the last
statement of the corollary in this case was previously conjectured by
Alev and Farkas in \cite{AlFa}). In fact, the proof in \cite{BEG} is
based on showing that the Fourier transform of the $D$-module
$M_\phi(V)$ (where $\phi: V\to V/G$ is the tautological map) is
generically defined by a holonomic system of differential equations,
and the method of this paper is, essentially, an extension of the
method of \cite{BEG} to general varieties.

Our results imply the following theorem in noncommutative algebra.

  Let $A$ be a nonnegatively filtered associative algebra, such that
  $A_0:=\gr(A)$ is a finitely generated module over its center $Z$, 
  and let $X:={\rm Spec}(Z)$ be the corresponding Poisson variety.
  Assume that $X$ has finitely many symplectic
  leaves. More precisely, we assume that for some integer $d>0$ there is a filtered linear lift 
$g: Z\hookrightarrow A$ of the inclusion $Z\hookrightarrow A_0$ such that 
$[g(Z^i),F_jA]\subset F_{i+j-d}A$. In this case, 
$Z$ acquires a natural homogeneous Poisson bracket $\lbrace,\rbrace$ 
of degree $-d$, independent on the choice of $g$, such that for $a\in Z^i$, $b\in Z^j$, 
$\lbrace a,b\rbrace$ is the image of $[g(a),g(b)]$ in 
$F_{i+j-d}(A)$, and we assume that $X$ has finitely many symplectic leaves with 
respect to this bracket. In particular, if $A_0$ is commutative, the requirement is 
that $[F_iA,F_jA]\subset F_{i+j-d}(A)$ (the number $d$ is then the maximal integer with this 
property.)\footnote{For convenience, let us forbid the trivial situation when $A$ is finite-dimensional.} 

\begin{theorem}\label{th2}
Under the above assumption, the space $HH_0(A)=A/[A,A]$ is finite-dimensional.  In
  particular, $A$ has finitely many irreducible finite-dimensional
  representations.  
\end{theorem}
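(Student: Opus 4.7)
The plan is to reduce Theorem \ref{th2} to Theorem \ref{th1} by passing to the associated graded, where the commutator in $A$ becomes a Poisson bracket on $A_0$. The first task is to equip $A_0 = \gr(A)$ with the structure of a coherent Poisson module over the Poisson algebra $Z = \OO_X$. Coherence is immediate from the hypothesis that $A_0$ is finitely generated over $Z$. The Lie action of $Z$ on $A_0$ is obtained from the lift $g$: for $z \in Z^i$ and $\bar a \in A_0^j$ with lift $\tilde a \in F_j A$, we declare $\{z, \bar a\}$ to be the principal symbol of the commutator $[g(z), \tilde a] \in F_{i+j-d}A$, viewed in the graded piece $A_0^{i+j-d}$. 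Because $d > 0$, this is independent of both $\tilde a$ and the particular choice of $g$, and the Leibniz and Jacobi identities for this action descend straightforwardly from the associative identities in $A$ together with the centrality of $Z$ in $A_0$; restricted to $Z \subset A_0$, this bracket recovers the Poisson structure on $Z$ specified in the hypotheses by construction.

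Next, the filtration on $A$ induces one on $A/[A,A]$, and strictness of the filtered exact sequence $0 \to [A,A] \to A \to A/[A,A] \to 0$ yields $\gr(A/[A,A]) = A_0 / \gr([A,A])$ as graded vector spaces. The same commutator identity $[g(z), \tilde a] \in F_{i+j-d}A \cap [A,A]$, whose principal symbol is $\{z, \bar a\}$, shows that $\{Z, A_0\} \subseteq \gr([A,A])$. Consequently $\gr(A/[A,A])$ is a quotient of $A_0/\{Z, A_0\}$. Applying Theorem \ref{th1} with $Y = X$, $\phi = \id_X$, and $N = A_0$, the space $A_0/\{Z, A_0\}$ is finite-dimensional, hence so is $\gr(A/[A,A])$. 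Because the filtration on $A$ is nonnegative and exhaustive, only finitely many graded pieces of $A/[A,A]$ are nonzero, which forces $A/[A,A] = HH_0(A)$ itself to be finite-dimensional.

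For the second assertion, each irreducible finite-dimensional representation $V$ of $A$ gives a character $\chi_V := \operatorname{tr}_V : A \to \KK$ that vanishes on $[A,A]$ and thus descends to a linear functional on $HH_0(A)$. If $V_1, \ldots, V_k$ are pairwise non-isomorphic irreducibles, then Jacobson density makes the map $A \to \prod_i \operatorname{End}(V_i)$ surjective, so $\chi_{V_1}, \ldots, \chi_{V_k}$ are linearly independent functionals on $A/[A,A]$. Hence the number of isomorphism classes of finite-dimensional irreducibles is bounded by $\dim_\KK HH_0(A) < \infty$.

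The main obstacle is verifying in the first step that $A_0$ genuinely carries a well-defined coherent Poisson $\OO_X$-module structure compatible with the Poisson bracket on $Z$; this is the essential use of the filtration hypothesis $[g(Z^i), F_j A] \subset F_{i+j-d}A$ with $d > 0$. Once this is in place, the rest is the formal filtered-to-graded bookkeeping above, plus a direct application of Theorem \ref{th1} and the standard density argument in representation theory.
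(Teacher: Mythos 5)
Your second and third steps (the filtered-to-graded bookkeeping and the character/density argument) are fine and match the paper, but the first step contains a genuine gap, and it is exactly the point where the paper takes a different object. You claim that $\{z,\bar a\}:=$ the symbol of $[g(z),\tilde a]$ makes $A_0$ itself a coherent Poisson module over $Z$, with independence of $g$ and with the Leibniz and Jacobi identities descending "straightforwardly." Independence of the lift $\tilde a$ is indeed automatic, but independence of $g$ and the remaining axioms are not, because $A_0$ is in general noncommutative (it is only assumed finite over its center $Z$). For the Jacobi identity one must compare $g(\{z,w\})$ with $[g(z),g(w)]$: their difference lies only in $F_{i+k-d-1}A$, and its commutator with $\tilde m\in F_jA$ lies in $F_{i+j+k-d-1}A$, whose symbol in the relevant degree $i+j+k-2d$ is in general a nonzero commutator term in $A_0$; similarly the rule $\{zw,m\}=z\{w,m\}+w\{z,m\}$ fails by the inner-derivation discrepancy coming from $g(zw)-g(z)g(w)$, and changing $g$ changes $D_z$ by inner derivations of $A_0$. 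This is precisely what the paper records in Example \ref{pmexams}(\ref{a0z0exam}): the derivations $D_z$ of $A_0$ are only well defined up to inner derivations, so what carries a canonical Poisson module structure over $Z$ is not $A_0$ but its Hochschild homology, in particular $A_0/[A_0,A_0]$. Consequently your appeal to Theorem \ref{th1} with $N=A_0$ is not licensed, and the statement you actually need, $\dim A_0/\{Z,A_0\}<\infty$, is strictly stronger than what the paper proves and is left unproved by your argument. (When $A_0$ is commutative, i.e.\ $A_0=Z$, your argument is correct and reduces to the Brylinski spectral sequence remark in the paper.)

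The repair is the paper's route: observe that $\gr([A,A])\supseteq \{Z,A_0\}+[A_0,A_0]$ (this is Proposition \ref{ssprop}, and your symbol computation already gives the first inclusion; the second comes from symbols of commutators $[a,b]$), so $\gr(A/[A,A])$ is a quotient of $HP_0(Z,A_0/[A_0,A_0])$. Since $A_0/[A_0,A_0]$ is a genuine coherent Poisson module over $Z$ (inner derivations act trivially on it, so the $g$-ambiguity and the failures of Jacobi/Leibniz disappear), Theorem \ref{th1} applied with $N=A_0/[A_0,A_0]$, $Y=X$, $\phi=\id$ gives finite-dimensionality, and then your filtration and character arguments go through unchanged.
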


\begin{example} Let $V$ be a symplectic vector space and $G \subset
  Sp(V)$ a finite subgroup.  For every conjugation-invariant function
  $c$ on the set of symplectic reflections of $G$, one may form the
  symplectic reflection algebra $A = H_{1,c}$ \cite{EGch} (deforming
  the algebra $\mathrm{Weyl}(V) \rtimes \KK[G]$), equipped with its
  natural filtration. The associated graded algebra, $A_0 = \gr
  H_{1,c}$ is isomorphic to $\operatorname{Sym} V \rtimes \KK[G]$, and
  hence is finite over its center $(\operatorname{Sym} V)^G$ whose
  spectrum $V/G$ is a Poisson variety with finitely many symplectic
  leaves (with Poisson bracket of degree $-2$, i.e., $d=2$). 
Hence, $A$ has finitely many irreducible finite-dimensional
  representations.\footnote{For rational Cherednik algebras, this is
    well-known: all finite-dimensional representations belong to
    category $\OO$, so the number of irreducible finite-dimensional
    representations is dominated by the number of irreducible
    representations of $G$.}
\end{example}

\begin{remark}\label{hbrem}
  Theorem \ref{th2} extends (with essentially the same proof) to the
  case of \emph{deformation quantizations} of $A_0$. That is, we
  replace $A$ by a $\KK[[\hbar]]$-algebra $A_\hbar$ such that $A_\hbar
  \cong A_0[[\hbar]] = \{\sum_{i \geq 0} a_i \hbar^i: a_i \in A_0\}$
  as an $\KK[[\hbar]]$-module, $A_\hbar/(\hbar) \cong A_0$ as an
  algebra, $A_0$ is a finitely generated module over its center $Z$,
  and $\Spec Z$ is a Poisson variety with finitely many symplectic
  leaves.\footnote{More precisely, similarly to above, we assume that
    for some integer $d>0$, there is a linear lift $g: Z\to A$ such
    that for any $z\in Z$ and $a\in A$, $[g(z),a]$ is divisible by
    $\hbar^d$ (this condition is equivalent to the condition that the
    center of $A/\hbar^d A$ is free as a $\Bbb
    K[h]/\hbar^d$-module). In this case, $Z$ acquires a natural
    Poisson bracket $\lbrace a,b\rbrace :=\hbar^{-d}[g(a),g(b)]\text{
      mod }\hbar$ independent of the lift, and we assume that $\Spec
    Z$ has finitely many symplectic leaves with respect to this
    bracket.  In the case when $A_0$ is commutative, the requirement
    is that $[A,A]\subset \hbar^d A$, and $d$ is the maximal integer
    with this property.}  Then, $HH_0(A_\hbar[\hbar^{-1}])$ is
  finite-dimensional over $\KK((\hbar))$, and in particular,
  $A_\hbar[\hbar^{-1}]$ has finitely many irreducible
  finite-dimensional representations (over $\KK((\hbar))$).  Note that
  this subsumes the above results, since if $A$ is a filtered
  quantization of $A_0$, then the completed Rees algebra $A_\hbar :=
  \widehat{\bigoplus}_{i \geq 0} (F_i A) \cdot \hbar^i$ is a
  deformation quantization of $A_0$. This setting applies to much more
  general $A_0$, since it need not be graded, and even if it is,
  $\{-,-\}$ is allowed to be homogeneous of nonnegative degree.
\end{remark}

\begin{remark}
  As explained in \S \ref{sche}, these results extend in a
  straightforward manner from the setting of varieties to the
more general setting of (not necessarily reduced) separated schemes of finite 
type over $\mathbb K$.  In \S \ref{fsche} we 
  comment on the case of formal schemes.
\end{remark}

In \S 4 we study the structure of the $D$-module $M(X)$ when $X$ has
finitely many symplectic leaves.  In particular, we determine the
structure of $M(X)$ for $X=V/G$, where $V$ is a symplectic variety,
and $G$ is a finite group of symplectic transformations of $V$.  We
also compute the space of distributions on a compact symplectic
manifold $V$ with an action of a finite group $G$ which are preserved
by $G$-invariant Hamiltonian vector fields, and in particular show
that this space is finite-dimensional.

The appendix, written by Ivan Losev, contains a generalization of
Theorem \ref{th2}, which states that, under the same hypotheses on
$A$, it has finitely many prime ideals, and they are all
primitive. Moreover, the result extends to deformation
quantizations $A_\hbar[\hbar^{-1}]$ as in Remark \ref{hbrem}, as algebras
over $\KK((\hbar))$.  

Finally, in a second appendix, we pose some questions (suggested by
Losev) which would generalize the results of Losev's appendix.

\textbf{Acknowledgments.}  We are grateful to Eric Rains for a
discussion in December 2008 which triggered this work. We thank Dennis
Gaitsgory for clarifying many technical points (in particular, for
supplying the proofs of Lemmas \ref{push} and \ref{semis}), and Ivan
Losev, Dmitry Kaledin, and David Vogan for useful discussions. We are
grateful to Losev for contributing his appendix to this paper. The
first author's work was partially supported by the NSF grant
DMS-0504847. The second author is a five-year fellow of the American
Institute of Mathematics, and was partially supported by the
ARRA-funded NSF grant DMS-0900233.

\section{The $D$-module $M_\phi(X)$}

\subsection{The smooth affine case}

Let $X$ be a smooth affine Poisson algebraic variety, and $b$ be the
corresponding Poisson bivector (i.e., biderivation of $\OO_X$).  Let
$D_X$ denote the algebra of differential operators on $X$.

For any $f \in \OO_X$, define the Hamiltonian vector field $\xi_f \in
\text{Vect}(X) = \operatorname{Der}(\OO_X)$ by
\begin{equation}
\xi_f(g) = \{f, g\}.
\end{equation}
Let $\text{HVect}(X) \subseteq \text{Vect}(X)$ denote the subspace of Hamiltonian vector fields.  

\begin{definition}
  The right $D$-module $M(X)$ attached to $X$ is the
  quotient
\begin{equation}
M(X) := \langle \text{HVect}(X)\rangle \setminus D_X
\end{equation}
of $D_X$ by the right ideal generated by $\text{HVect}(X)$.
\end{definition}
Next, let $Y$ be another affine variety, and $\phi: X \rightarrow Y$ a morphism.
Let $\text{HVect}(X, \phi^* \OO_Y) \subseteq \text{HVect}(X)$ denote the subspace of Hamiltonian vector fields of functions pulled back from $Y$.
\begin{definition}
\begin{equation}
M_\phi(X) := \langle\text{HVect}(X, \phi^*(\OO_Y))\rangle \setminus D_X.
\end{equation}
\end{definition}
Note that $M(X) = M_{\id}(X)$.
\begin{example}
  Let $X$ be a symplectic variety.  Then, we claim that $M(X)$ is
  canonically isomorphic to $\Omega$, the right $D$-module of volume
  forms on $X$.  Indeed, there is a homomorphism
  $\eta: M(X)\to \Omega$ sending $1$ to the canonical volume form on
  $X$ (it exists because the volume is preserved by Hamiltonian vector
  fields).  Since $\Omega$ is irreducible, $\eta$ is surjective, and
  it is easily seen to be injective by considering the associated
  graded morphism.
\end{example}

\begin{remark} 
  It is clear that $M_\phi(X)$ is independent of $Y$ in the following
  sense. Suppose that $\iota: Y\to Y'$ is a closed embedding.  Then,
  $M_\phi(X)=M_{\iota \circ \phi}(X)$.  So, we may assume that $\phi$
  is a dominant morphism (taking the minimal possible $Y$). This
  corresponds to the situation when the morphism
  $\phi^*: \OO_Y\to \OO_X$ is injective.
\end{remark}

\subsection{The singular case} 

Suppose now that $X$ is not
necessarily smooth, but still affine.  Then, while right
$D$-modules are not the same as modules over an algebra of
differential operators, there is
still a canonical right $D$-module, $D_X$, which represents the
functor of global sections on the category of right $D$-modules on
$X$.  

The $D$-module $D_X$ carries an action of $\OO_X$ and a compatible
Lie algebra action of ${\rm Vect}(X)$ by endomorphisms.  
Thus, the above definitions go through verbatim.

Explicitly, if $i: X \to V$ is an embedding of $X$ into a smooth
affine variety $V$, then by Kashiwara's theorem, the functor
$i_*$ defines an equivalence between the category of right
$D$-modules on $X$ and the category of right $D$-modules
on $V$ supported on $X$. Under this equivalence, $D_X$ maps to the
quotient $\langle I_X\rangle \setminus D_V$ of $D_V$ by the right
ideal generated by the defining ideal $I_X\subset \OO_X$ of $X$,
and it is easy to show that $M(X)$ maps to the quotient of $D_V$ by the
right ideal generated by $I_X$ and vector fields on $V$ tangent
to $X$ whose restrictions to $X$ are Hamiltonian. 
We denote this $D$-module on $V$ by $M(X,i)$.
Similarly, given $\phi: X \rightarrow Y$ as before, $M_\phi(X)$ maps
to the quotient of $D_V$ by the right ideal generated 
$I_X$ and vector fields on $V$
tangent to $X$ and specializing at $X$ to $\xi_f$ for some 
$f\in \OO_Y$. We denote this
$D$-module by $M_\phi(X,i)$.

\subsection{Compatibility with algebraic group actions}

Now suppose that, in the setting of the previous subsection, $X$
carries an algebraic action of an affine algebraic group $G$
preserving the Poisson bracket, and that the map $\phi$ is
$G$-invariant (where $G$ acts on $Y$ trivially). Then, it is easy to
see that $M_\phi(X)$ has a natural structure of weakly
$G$-equivariant $D$-module on $X$.  So, if $G$ is finite, $M_\phi(X)$
is naturally a $G$-equivariant $D$-module.

\begin{remark} 
  Note that, since the $D$-module $M_\phi(X)$ does not change when the
  Poisson bracket is rescaled, these statements remain true when $G$
  preserves the Poisson bracket on $X$ only up to scaling.
\end{remark}

\subsection{The behavior of $M_\phi(X)$ under taking quotients by finite groups} 

Let $G$ be a finite group, $X$ be an affine $G$-variety, and
$\pi: X\to X/G$ be the tautological map.  Let $\pi_*^G=\pi_!^G$ be the
\emph{equivariant pushforward} functor from the category of
$G$-equivariant right $D$-modules on $X$ to the category of right
$D$-modules on $X/G$ (it is the composition of taking the direct image
and passing to $G$-invariants).

\begin{lemma}\label{push}
  $\pi_*^G(D_X)=D_{X/G}$.
\end{lemma}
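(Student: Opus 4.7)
The plan is to construct a canonical morphism $\alpha: D_{X/G}\to \pi_*^G D_X$ and show it is an isomorphism. Since $D_{X/G}$ represents the global-sections functor on right $D$-modules on $X/G$, producing $\alpha$ amounts to giving a global section of $\pi_*^G D_X$; the image of the canonical generator $1\in D_X$ is $G$-invariant and provides exactly such a section.

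To verify $\alpha$ is an isomorphism I would reduce to a smooth ambient calculation via Kashiwara. Choose a $G$-equivariant closed embedding $X\hookrightarrow V$ into a smooth affine $G$-variety (obtained by averaging generators of $\OO_X$), and a closed embedding $X/G\hookrightarrow W$ into a smooth affine variety $W$; after enlarging $W$ if necessary, lift $\pi$ to a $G$-invariant morphism $\tilde\pi: V\to W$ factoring through $V/G$. Kashiwara identifies $D_X$ with $D_V/\langle I_X\rangle$ on $V$ and $D_{X/G}$ with $D_W/\langle I_{X/G}\rangle$ on $W$, so the claim becomes
$$\tilde\pi_*^G\bigl(D_V/\langle I_X\rangle\bigr)\;\cong\;D_W/\langle I_{X/G}\rangle$$
as right $D_W$-modules supported on $X/G$. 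The core case is the smooth one, $X=V$ and $I_X=0$, i.e., $\tilde\pi_*^G D_V\cong D_W/\langle I_{V/G}\rangle$; from this the general case follows by applying the exact functor $\tilde\pi_*^G$ (exact because $\tilde\pi$ is finite affine and $G$-invariants are exact in characteristic zero, by Maschke) to $0\to \langle I_X\rangle\to D_V\to D_V/\langle I_X\rangle\to 0$ and tracking relations.

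For the smooth case itself, I would use the explicit description of the right-$D$-module pushforward along the finite morphism $\tilde\pi$ via the transfer bimodule, combined with the identification $\OO_V^G=\OO_W/I_{V/G}$, to match the $G$-invariants of the pushforward against the relations $\langle I_{V/G}\rangle$ on the generator. The main obstacle I anticipate is precisely this matching: showing that $G$-invariants impose exactly the relations $\langle I_{V/G}\rangle$ on the generator of $\tilde\pi_*^G D_V$, with no extra $G$-invariant relations appearing. Because $V/G$ is typically singular, this is a nontrivial statement about how $G$-invariant differential operators on $V$ interact with the singular geometry of $V/G$ inside the smooth ambient $W$, and it is presumably here that the detailed argument supplied by Gaitsgory does its work, likely by filtering $D_V$ by order and reducing the $G$-invariant computation to the $\OO$-module statement $\OO_V^G=\OO_{V/G}$.
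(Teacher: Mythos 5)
Your outline defers its essential content. The smooth-case identification $\tilde\pi_*^G D_V\cong D_W/\langle I_{V/G}\rangle$ --- equivalently, the claim that passing to $G$-invariants imposes exactly the relations $\langle I_{V/G}\rangle$ on the generator and nothing more --- \emph{is} the lemma in your embedded picture, and you explicitly leave it to an argument you presume exists (``filtering $D_V$ by order,'' etc.). The reduction of the singular case to the smooth one by pushing forward $0\to\langle I_X\rangle\to D_V\to D_V/\langle I_X\rangle\to 0$ and ``tracking relations'' is a second unproved matching of the same kind. So as written this is a plan with its central step missing, not a proof.

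The gap does close, and in a way that makes the embeddings unnecessary; this is the paper's actual argument. Note that $D_X$ is the induced module $\Ind(\OO_X)=\OO_X\otimes_{\OO_X}D_X$ (in your embedded picture, $D_V/\langle I_X\rangle\cong \OO_X\otimes_{\OO_V}D_V$), and for a proper --- here finite --- morphism the pushforward of an induced module is induced: $\pi_*\Ind(\OO_X)=\Ind(\pi_*\OO_X)$, which in coordinates is just the transfer-bimodule identity $\pi_*\bigl(\OO_V\otimes_{\OO_W}D_W\bigr)=\pi_*\OO_V\otimes_{\OO_W}D_W$ with no higher direct images since $\pi$ is affine. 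Since $G$ acts trivially on $D_W$ and the averaging idempotent splits $\pi_*\OO_X$ as $\OO_{X/G}$ plus a $G$-stable complement ($\operatorname{char}\KK=0$), taking $G$-invariants commutes with $\otimes_{\OO_W}D_W$, so $\pi_*^G(D_X)=\Ind\bigl((\pi_*\OO_X)^G\bigr)=\Ind(\OO_{X/G})=D_{X/G}$. Thus the ``extra invariant relations'' you worry about cannot occur, no order filtration is needed, and the singular case is handled uniformly --- the only input beyond functoriality is the $\OO$-level fact $(\pi_*\OO_X)^G=\OO_{X/G}$, exactly as you guessed at the end, but it enters through the projection formula rather than through a relation-by-relation comparison inside $D_W$.
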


\begin{proof}
  Let $\Ind$ be the induction functor from $\OO$-modules to right
  $D$-modules on any algebraic variety $Y$, i.e.,
  $\Ind(M)=M\otimes_{\OO_Y}D_Y$. Then, $\Ind(\OO_Y)=D_Y$.
  Also, $\Ind$ commutes with direct images, since
  $\Ind\circ \pi_*=\pi_*\circ \Ind$ for all proper morphisms
  $\pi$.  Therefore,
  \[
  \pi_*^G(D_X)=\pi_*^G(\Ind(\OO_X))={\rm
    Ind}(\pi_*(\OO_X)^G)=\Ind(\OO_{X/G})=D_{X/G}. \qedhere 
  \]
\end{proof}
Next, let $X$ be Poisson and $G$ act by Poisson automorphisms.  Let
$Y$ be another affine variety and $\phi: X\to Y$ be a $G$-equivariant
morphism (where $G$ acts trivially on $Y$).

\begin{corollary}
  \label{pushpois} $\pi_*^G(M_\phi(X))=M_\phi(X/G)$.
\end{corollary}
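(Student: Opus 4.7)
My plan is to apply the equivariant pushforward $\pi_*^G$ to the defining short exact sequence
\begin{equation*}
0 \to J_X \to D_X \to M_\phi(X) \to 0,
\end{equation*}
where $J_X := \langle \text{HVect}(X,\phi^*\OO_Y)\rangle$ is the right ideal from the definition of $M_\phi(X)$. Since $\pi$ is finite and $|G|$ is invertible in $\KK$, the functor $\pi_*^G$ is exact. Using Lemma \ref{push} to identify $\pi_*^G(D_X) = D_{X/G}$, the task reduces to showing that $\pi_*^G(J_X)$ coincides with the right ideal $J_{X/G} := \langle \text{HVect}(X/G, \bar\phi^*\OO_Y)\rangle \subset D_{X/G}$, where $\bar\phi : X/G \to Y$ is the morphism induced by $\phi$ (well-defined because $G$ acts trivially on $Y$).

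The first step is to match generators. Since $\phi$ is $G$-equivariant and $G$ acts trivially on $Y$, one has $\phi^*\OO_Y \subset \OO_X^G = \pi^*\OO_{X/G}$, and $G$ acting by Poisson automorphisms forces each Hamiltonian vector field $\xi_f$ with $f \in \phi^*\OO_Y$ to be $G$-invariant. Restricting such a $\xi_f$ to the subalgebra $\OO_X^G$ recovers the Hamiltonian vector field on $X/G$ of the corresponding element of $\bar\phi^*\OO_Y$, so the identification $D_X^G \cong D_{X/G}$ underlying Lemma \ref{push} sends $\text{HVect}(X, \phi^*\OO_Y)$ bijectively onto $\text{HVect}(X/G, \bar\phi^*\OO_Y)$.

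The second step is to compute $J_X^G$. Any $a \in J_X^G$ can be written $a = \sum_i \xi_{f_i} d_i$ with $f_i \in \phi^*\OO_Y$ and $d_i \in D_X$, and, being $G$-invariant, equals its own average over $G$. Using the $G$-invariance of each $\xi_{f_i}$, this average collapses to
\begin{equation*}
a = \sum_i \xi_{f_i} \cdot \Bigl(\frac{1}{|G|}\sum_{g \in G} g \cdot d_i\Bigr),
\end{equation*}
so $J_X^G = \text{HVect}(X, \phi^*\OO_Y) \cdot D_X^G$. Combined with the first step and Lemma \ref{push}, this is exactly $J_{X/G}$, and the corollary follows.

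The only subtle point, which I expect to be the main obstacle, is checking that the abstract isomorphism of Lemma \ref{push} is compatible with the concrete generator-level picture — that a $G$-invariant Hamiltonian vector field $\xi_f$ is carried, under $D_X^G \cong D_{X/G}$, to the Hamiltonian vector field on $X/G$ of the function corresponding to $f$. This is transparent from the proof of Lemma \ref{push}: the isomorphism arises by applying $\Ind$ to $\pi_*(\OO_X)^G = \OO_{X/G}$, hence is manifestly equivariant for the natural action of $G$-invariant vector fields restricted to $\OO_X^G$.
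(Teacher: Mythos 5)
Your proposal is correct and is essentially the paper's own argument, just spelled out: the paper likewise deduces the corollary from Lemma \ref{push} together with the observation that both $M_\phi(X)$ and $M_\phi(X/G)$ are coinvariants of $D$-modules under the same $G$-invariant Hamiltonian vector fields of functions pulled back from $Y$ (exactness of $\pi_*^G$ and the averaging computation you write out are the implicit mechanism). The compatibility point you flag at the end is indeed the only thing to check, and your resolution via the functoriality of $\Ind$ in the proof of Lemma \ref{push} is the intended one.
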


\begin{proof}
  This follows from Lemma \ref{push}, since both $M_\phi(X)$ and
  $M_\phi(X/G)$ are the $D$-modules of coinvariants of Hamiltonian
  vector fields associated to functions pulled back from $Y$.
\end{proof}

\subsection{The case of general varieties} 

Next, we generalize the definition of $M_\phi(X)$ to the case when X
is not necessarily affine. We use the same definition, except where
$\text{HVect}(X)$ is viewed as a subsheaf of the tangent sheaf on $X$,
and similarly $\text{HVect}(X, \phi^* \OO_Y)$ in the case of a
morphism $\phi:X \rightarrow Y$. In particular, the definition is
local, in the following sense:
\begin{lemma}\label{locpm}  If $j: U \to X$ is an open embedding of Poisson
varieties, then
  $M_\phi(U) = j^!M_\phi(X) = j^*M_\phi(X)$ is the restriction of $M_\phi(X)$ to $U$.
\end{lemma}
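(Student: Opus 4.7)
The plan is to exploit the fact that all ingredients of the definition of $M_\phi$ are local on $X$, and that for an open embedding $j: U \to X$ the pullback $j^*$ of $D$-modules coincides with sheaf-theoretic restriction and is exact. Thus, the cokernel defining $M_\phi(X)$ restricts to the cokernel defining $M_\phi(U)$. The equality $j^!=j^*$ is the general fact that for an open embedding no shift or derived correction is needed (since $j$ is smooth of relative dimension zero), so it suffices to establish the $j^*$ statement.

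In more detail, the first step is to record that $j^*D_X$ is canonically identified with $D_U$, since $D_X$ is built sheaf-theoretically from $\OO_X$ and $\text{Vect}(X)$, both of which restrict correctly to $U$. Next, the subsheaf $\text{HVect}(X,\phi^*\OO_Y)\subseteq \text{Vect}(X)$ restricts to $\text{HVect}(U,(\phi\circ j)^*\OO_Y)\subseteq\text{Vect}(U)$, because the formula $\xi_f(g)=\{f,g\}$ is local in $X$ and $j^*(\phi^*\OO_Y)=(\phi\circ j)^*\OO_Y$. Applying the exact functor $j^*$ to the defining right exact sequence
$$\text{HVect}(X,\phi^*\OO_Y)\otimes_{\OO_X}D_X\longrightarrow D_X\longrightarrow M_\phi(X)\longrightarrow 0$$
then yields the analogous right exact sequence on $U$, whose cokernel is $M_\phi(U)$.

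The main technical point to verify is that the right ideal generated by $\text{HVect}(X,\phi^*\OO_Y)$ inside $D_X$ has the expected restriction, i.e., that ideal generation commutes with $j^*$. This holds because the ideal is the image of the displayed map above and $j^*$ preserves images (being exact). In the possibly singular setting, where $D_X$ is only described explicitly after a local embedding into a smooth ambient variety via Kashiwara's theorem (as in the earlier discussion of $M(X,i)$), one covers $X$ by open affines, applies the argument on each patch, and glues; beyond this bookkeeping, no new input is required.
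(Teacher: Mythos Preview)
Your proposal is correct. The paper itself gives no explicit proof of this lemma: it is stated immediately after the remark that the definition of $M_\phi(X)$ in the non-affine case uses the subsheaf $\text{HVect}(X,\phi^*\OO_Y)$ of the tangent sheaf, and the lemma is presented as the precise meaning of the claim that ``the definition is local.'' Your argument---that $j^*$ is exact for an open embedding, that $j^*D_X=D_U$, that the Hamiltonian-vector-field subsheaf restricts correctly, and that therefore the defining cokernel restricts to the defining cokernel on $U$---is exactly the natural unpacking of this one-line justification, so your approach matches the paper's (implicit) one.
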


\subsection{The behavior of $M_\phi(X)$ under closed Poisson embeddings}

\begin{proposition}\label{closedemb}
  Let $i: Z\to X$ be a closed Poisson embedding and $\phi: X\to Y$ be
  a morphism. Then, there is a natural epimorphism
  $\theta_i: M_\phi(X)\to i_*M_\phi(Z)$.
\end{proposition}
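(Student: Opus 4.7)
The plan is to realize both $M_\phi(X)$ and $i_*M_\phi(Z)$ as explicit quotients of a common ring of differential operators, and obtain $\theta_i$ by passing to a further quotient. By Lemma~\ref{locpm} the formation of $M_\phi$ commutes with restriction to open subsets, so we may work locally and assume $X$ is affine. Choose a closed embedding $j:X\hookrightarrow V$ into a smooth affine variety and set $j' := j\circ i : Z\hookrightarrow V$. By the Kashiwara description recalled in \S 2.2,
\[
M_\phi(X) \simeq J_X \setminus D_V, \qquad i_*M_\phi(Z) \simeq J_Z \setminus D_V,
\]
where $J_X$ is the right ideal of $D_V$ generated by $I_X$ and by those $\widetilde{\xi}\in\operatorname{Vect}(V)$ that are tangent to $X$ and satisfy $\widetilde{\xi}|_X = \xi_{\phi^*f}$ for some $f\in\OO_Y$, and $J_Z$ is the analogous ideal built from $I_Z$ and from $Z$-tangent vector fields on $V$ restricting on $Z$ to $\xi_{(\phi\circ i)^*f}$. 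Producing $\theta_i$ amounts to verifying the inclusion $J_X\subset J_Z$.

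The containment $I_X\subset I_Z$ is automatic. The substantive step is that each generator $\widetilde{\xi}$ of $J_X$ of the second type already lies in $J_Z$, i.e., is tangent to $Z$ and restricts on $Z$ to $\xi_{(\phi\circ i)^*f}$. This is exactly where the Poisson hypothesis enters: $i$ is a closed \emph{Poisson} embedding, so $I_{Z/X} := I_Z/I_X\subset\OO_X$ is a Poisson ideal. For any $g\in I_Z$, the element $\widetilde{\xi}(g)\in\OO_V$ reduces modulo $I_X$ to $\xi_{\phi^*f}(g|_X) = \{\phi^*f,\, g|_X\}_X \in I_{Z/X}$, so $\widetilde{\xi}(g)\in I_Z$ and $\widetilde{\xi}$ is tangent to $Z$. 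For the restriction identity, the Poisson-subvariety structure gives $\{\phi^*f,\,-\}_X|_Z = \{(\phi\circ i)^*f,\,-\}_Z$, hence $\widetilde{\xi}|_Z = \xi_{\phi^*f}|_Z = \xi_{(\phi\circ i)^*f}$. Together these yield $J_X\subset J_Z$.

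Granted the inclusion, the identity on $D_V$ induces a canonical surjection $J_X\setminus D_V \twoheadrightarrow J_Z\setminus D_V$, which under Kashiwara's equivalence is precisely $\theta_i:M_\phi(X)\twoheadrightarrow i_*M_\phi(Z)$. Independence of the auxiliary embedding $j$ (and hence the ability to glue in the non-affine case) follows from the fact that the two sides only depend on the ideals generated by $I_X$ (resp.\ $I_Z$) and by Hamiltonian lifts, both of which pull back canonically along open restrictions. The only real obstacle is the Poisson-ideal calculation of the second paragraph; once that is in hand, the rest is a formal manipulation with quotient $D$-modules.
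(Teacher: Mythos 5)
Your proof is correct and is essentially the paper's argument: the whole content is that, because $i$ is a closed \emph{Poisson} embedding, the Hamiltonian vector fields $\xi_{\phi^*f}$ are tangent to $Z$ and restrict there to $\xi_{(\phi\circ i)^*f}$, so the right ideal presenting $i_*M_\phi(Z)$ contains the one presenting $M_\phi(X)$. The only difference is presentational: the paper runs this directly on the canonical right $D$-module $D_X$ (which carries the $\OO_X$- and $\operatorname{Vect}(X)$-actions even for singular $X$), whereas you pass through an auxiliary smooth embedding $j:X\to V$ and Kashiwara's equivalence, which is exactly the extrinsic description of $M_\phi(X,i)$ already recorded in \S 2.2.
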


\begin{proof}
  By definition, $i_*M_\phi(Z)$ is the quotient of $D_X$ by the right
  ideal generated by regular functions vanishing on $Z$ and vector
  fields on $X$ which are tangent to $Z$ and specialize there to
  Hamiltonian vector fields associated to functions pulled back from
  $Y$.  This ideal contains the Hamiltonian vector fields on $X$
  associated to functions pulled back from $Y$, since they are all
  tangent to $Z$ (by definition of a Poisson embedding).
\end{proof}

\subsection{The direct image of $M_\phi(X)$ to the point}

Let $X$ and $Y$ be affine.

\begin{proposition}\label{hp0} 
  Let $p: X\to pt$ be the projection.  The space
  $\OO_X/\lbrace{\OO_Y,\OO_X\rbrace}$ is the (underived) direct image
  $p_0(M_\phi(X))$ of $M_\phi(X)$ under $p$.  In particular,
  $p_0(M(X))$ is the zeroth Poisson homology $HP_0(\OO_X)$.
\end{proposition}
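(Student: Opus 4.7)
The plan is to unwind the definitions, handling first the smooth affine case directly and then reducing the general (singular) case to it via the Kashiwara embedding described in \S 2.2.

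In the smooth affine case, for a right $D_X$-module $M$ the underived direct image to a point is $p_0(M) = M \otimes_{D_X} \OO_X = M / M\cdot \text{Vect}(X)$, using the presentation $\OO_X = D_X / D_X\cdot \text{Vect}(X)$ as a left $D_X$-module. Writing $M_\phi(X) = D_X/J$ with $J$ the right ideal generated by $\text{HVect}(X, \phi^*\OO_Y)$, one gets
$$p_0(M_\phi(X)) = D_X / \bigl(J + D_X\cdot \text{Vect}(X)\bigr).$$
The quotient map $D_X \twoheadrightarrow \OO_X$ is $D \mapsto D(1)$ (well-defined because every derivation kills the constant $1$). For a generator $\xi_{\phi^* f}\cdot D$ of $J$, one computes $(\xi_{\phi^* f}\cdot D)(1) = \xi_{\phi^* f}(D(1)) = \{\phi^* f,\, D(1)\}$. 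As $D$ varies, $D(1)$ ranges over all of $\OO_X$, so the image of $J$ in $\OO_X$ is exactly $\{\phi^*\OO_Y, \OO_X\} = \{\OO_Y, \OO_X\}$, giving the claim.

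For the singular case, fix a closed embedding $i\colon X \hookrightarrow V$ into a smooth affine $V$. By Kashiwara's theorem $i_*$ is an exact equivalence between right $D$-modules on $X$ and those on $V$ supported on $X$, and under this equivalence $M_\phi(X)$ corresponds to the $D_V$-module $M_\phi(X,i)$ as in \S 2.2. Since $i_*$ is exact and $p_X = p_V\circ i$, we have $p_0(M_\phi(X)) = p_0(M_\phi(X,i))$, reducing the problem to the smooth calculation on $V$. Running the same argument there: $p_0(M_\phi(X,i))$ is $\OO_V$ modulo the image under $D\mapsto D(1)$ of the right ideal generated by $I_X$ together with the tangent-to-$X$ lifts $\tilde\xi$ of Hamiltonian vector fields $\xi_{\phi^* f}$. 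The image of $I_X\cdot D_V$ in $\OO_V$ is $I_X$, producing $\OO_V/I_X = \OO_X$; and $\tilde\xi(g)$ reduced modulo $I_X$ equals $\xi_{\phi^* f}(\bar g) = \{\phi^* f, \bar g\}$, where $\bar g$ is the restriction of $g$ to $X$. Since $\OO_V \twoheadrightarrow \OO_X$ is surjective, this recovers all of $\{\phi^*\OO_Y, \OO_X\}$, finishing the proof.

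The main point requiring care is bookkeeping with the left and right actions when identifying $p_0(M) = M/M\cdot\text{Vect}(X)$ and computing the images of the generators of $J$; the singular reduction via Kashiwara is then routine, using only the standard fact that every vector field on $X$ lifts to a tangent-to-$X$ vector field on the smooth ambient $V$. I do not anticipate a serious obstacle beyond this careful unwinding.
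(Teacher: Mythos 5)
Your proof is correct and follows essentially the same route as the paper: the paper's argument is precisely to choose a closed embedding $i: X \to V$ into a smooth affine variety and observe that $\OO_X/\lbrace\OO_Y,\OO_X\rbrace = M_\phi(X,i)\otimes_{D_V}\OO_V$, which is the computation you carry out explicitly via $D \mapsto D(1)$. You have merely spelled out the details the paper leaves as "easy to see" (including the lifting of $\xi_{\phi^*f}$ to a tangent vector field on $V$, which holds since $\Omega_V$ is projective for smooth affine $V$).
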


\begin{proof}
  Let $i :X\to V$ be a closed embedding of $X$ into a smooth affine
  variety $V$.  It is easy to see that
  $\OO_X/\lbrace{\OO_Y,\OO_X\rbrace}=M_\phi(X,i)\otimes_{D_V}\OO_V$,
  which implies the statement.
\end{proof}
 
\begin{remark}\label{pdr}
  Define the Poisson-de Rham homology of an affine Poisson variety $X$
  to be the full direct image $p_*(M(X))$ (the de Rham cohomology of
  $M(X)$), i.e., $HP^{DR}_j(X):=L_jp_0(M(X))$, $j\ge 0$.  This gives a
  new homology theory of affine Poisson varieties (which can be
  extended in an obvious way to non-affine varieties, with homology
  living in degrees $-\dim X\le d\le \dim X$).  If $X$ is symplectic
  of dimension $n$, then
  $HP^{DR}_j(X)=HP_j(\OO_X)=H^{n-j}(X,{\mathbb K})$.\footnote{The
    equality $HP_j(\OO_X)=H^{n-j}(X,{\mathbb K})$ also holds for smooth
    and complex analytic symplectic manifolds (\cite{Br}). This
    implies that for such manifolds, Poisson homology (in particular,
    $HP_0$) need not be finite-dimensional.  For instance, consider
    the complex analytic manifold
    $X=\mathbb C^*\times (\mathbb C\setminus \mathbb Z)\subset \mathbb C^2$ with
    the standard symplectic structure $dz\wedge dw$; its $H^2$ is
    infinite-dimensional, so $\dim HP_0(\OO_X)=\infty$.}
  Furthermore, Proposition \ref{hp0} implies that, for arbitrary
  affine $X$, $HP^{DR}_0(X)=HP_0(\OO_X)$.  However, this is not true
  for higher homology, in general.
\end{remark}

\begin{corollary}\label{fidi}
  If $M_\phi(X)$ is holonomic, then
  $\OO_X/\lbrace{\OO_Y,\OO_X\rbrace}$ is finite-dimensional.
\end{corollary}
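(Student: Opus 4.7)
The plan is very short because essentially all the work has already been done in Proposition \ref{hp0}. First, I would invoke Proposition \ref{hp0} to rewrite
\[
\OO_X/\{\OO_Y,\OO_X\} = p_0(M_\phi(X)),
\]
where $p: X \to \mathrm{pt}$ is the projection to a point. This reduces the problem to showing that $p_0(M_\phi(X))$ is finite-dimensional.

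Next, I would appeal to the standard fact from $D$-module theory that holonomicity is preserved under (derived) direct image along an arbitrary morphism of (finite-type) varieties: if $M_\phi(X)$ is holonomic on $X$, then each cohomology sheaf $L_j p_0(M_\phi(X)) = p_j(M_\phi(X))$ of $p_* M_\phi(X)$ is a holonomic $D$-module on the point. But a holonomic $D$-module on a point is simply a finite-dimensional $\KK$-vector space, so in particular $p_0(M_\phi(X))$ is finite-dimensional. This yields the claim.

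There is essentially no obstacle here: the corollary is a direct combination of Proposition \ref{hp0} with the preservation of holonomicity under direct image, together with the trivial observation that holonomic $D$-modules on a point are finite-dimensional. The only mild subtlety is that $X$ may be singular, but by choosing a closed embedding $i: X \hookrightarrow V$ into a smooth affine $V$ and working with $M_\phi(X,i)$ as in the singular case discussion, the direct image to a point can be computed on $V$, where the usual holonomic direct image theorem applies verbatim; alternatively, one can cite this preservation statement directly in the category of $D$-modules on (possibly singular) separated schemes of finite type.
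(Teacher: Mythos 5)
Your argument is correct and is exactly the paper's proof: Proposition \ref{hp0} identifies $\OO_X/\lbrace{\OO_Y,\OO_X\rbrace}$ with $p_0(M_\phi(X))$, and then one cites the preservation of holonomicity under direct image (plus the observation that a holonomic $D$-module on a point is finite-dimensional). The remark about computing via an embedding $i: X \hookrightarrow V$ into a smooth variety is the same device the paper uses throughout for singular $X$, so there is nothing to add.
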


\begin{proof} 
  The direct image of a holonomic $D$-module is holonomic (see
  \cite{Kash}).
\end{proof} 

\subsection{The case of Poisson modules} \label{pmdefs}
The above definitions and statements naturally generalize to the context of Poisson $\OO_X$-modules.  We first recall their definition.  

Let $B$ be a Poisson algebra. 
\begin{definition} A $B$-module $M$ is called a Poisson module 
if it is endowed with a Lie algebra action $\lbrace{,\rbrace}:
B\otimes M\to M$ that satisfies the Leibniz rules:
$$ 
\lbrace{a,bm\rbrace}=\lbrace{a,b\rbrace}m+b\lbrace{a,m\rbrace},\
\lbrace{ab,m\rbrace}=a\lbrace{b,m\rbrace}+b\lbrace{a,m\rbrace}.
$$
\end{definition}

\begin{example}\label{pmexams}\begin{enumerate}
\item $B$ is a Poisson module over itself. 
More generally, if $B$ is contained in a 
larger Poisson algebra $C$, then $C$ is a Poisson module over
$B$. 

\item\label{pmdmex} Let $B=\OO_X$, where $X$ is a symplectic variety.  Then a
  Poisson module over $B$ is the same thing as a (left) $D$-module on
  $X$.  More generally, if $X$ is an arbitrary affine Poisson variety,
  then a Poisson module over $\OO_X$ is an $\OO_X$-module $M$ with 
  flat connections along symplectic leaves (which glue together to give
  a global action of Hamiltonian vector fields).  In particular, $D_X$
  is always a Poisson module over $\OO_X$, and for smooth $X$, if $M$
  is a left $D$-module on $X$, then $M$ is naturally a Poisson module
  over $\OO_X$ (but not conversely).

\item \label{mbexam} Let $A_\hbar$ be a (not necessarily commutative)
  flat deformation of a commutative algebra $A_0$ over
  $\KK[[\hbar]]$. Let $M_\hbar$ be an $A_\hbar$-bimodule which is
  topologically free over $\KK[[\hbar]]$ and almost symmetric,
  i.e. such that for any $a\in A_\hbar$ and $m\in M_\hbar$, $am \equiv
  ma \pmod \hbar$. Then $M_0=M_\hbar/\hbar M_\hbar$ is naturally a
  Poisson module over $A_0$.

\item \label{a0z0exam} Let $A_0$ be an associative algebra,
and $A_\hbar$ a flat deformation of $A_0$ over $\KK[[\hbar]]$.
Let $Z$ be the center of $A_0$, and assume that $Z$ admits a linear lift $g: Z\to A$ 
which is central modulo $\hbar^d$. Then $Z$ inherits a Poisson structure 
from the deformation, as in Remark \ref{hbrem}, and the Hochschild homology and cohomology
of $A_0$ are Poisson modules over $Z$. In particular, $A_0/[A_0,A_0]$ is a Poisson module over $Z$.
Indeed, let ${\bold d}: Z=HH^0(A_0)\to HH^1(A_0)$ be the first  
nonzero differential in the spectral sequence computing the Hochschild cohomology of $A$
(i.e., ${\bold d}={\bold d}_d$). Then the Lie algebra action of $Z$ on the Hochschild 
homology and cohomology of $A_0$ is given by the formula 
$\lbrace{z,a\rbrace}=L_{{\bold d}z}(a)$, where $L_{{\bold d}z}$ is the Lie
derivative with respect to the outer derivation ${\bold d}z$ of $A_0$. 

\item Generalizing the previous two examples, let $A_\hbar, A_0$, and
  $Z$ be as in Example \ref{a0z0exam}, and $M$ an $A$-bimodule which
  is topologically free over $\KK[[\hbar]]$ and almost symmetric of degree $d$, in
  the sense that $g(z)m \equiv mg(z) \pmod{\hbar^d}$ for all $m \in M$ and $z \in Z$.  
  Then, the Hochschild homology and cohomology of $A_0$
  with coefficients in $M_0$ are Poisson modules over $Z$. Indeed,
  both $A_0$ and $M_0$ are $Z$-modules, and to each $z \in Z$
  one can attach a simultaneous derivation $D_z$ of $A_0$ and
  $M_0$ which is defined up to adding inner derivations, by $D_z(a_0) =
  \hbar^{-d}[g(z), a] \pmod{\hbar}, D_z(m_0) = \hbar^{-d}[g(z), m] \pmod{\hbar}$ for any
  lifts $a \in a_0 + (\hbar)$, and $m \in m_0
  +(\hbar)$ of $a_0$ and $m_0$ to $A$ and $M$ modulo
  $\hbar$. Here, being simultaneous means that 
$$
D_z(a_0 m_0) =
  D_z(a_0) m_0 + a_0 D_z(m_0),\ 
D_z(m_0 a_0) =
  D_z(m_0) a_0 + m_0 D_z(a_0).
$$
Simultaneous derivations act on Hochschild (co)homology of $A_0$ with
coefficients in $M_0$, and inner derivations act trivially.  Hence,
the map $z \mapsto D_z$ yields a well-defined derivation on this
Hochschild (co)homology (as a module over $Z$), which gives a Lie
algebra action of $Z$ on the cohomology.  Moreover,
$D_{zw}=wD_{z} + z D_{w}$, which implies that the action
induces a Poisson module structure over $Z$.
\end{enumerate}
\end{example}
 
\begin{definition} Let $M$ be a Poisson module over a Poisson
algebra $B$. The zeroth Poisson homology $HP_0(B,M)$ 
is the zeroth Lie algebra homology, i.e., the space of coinvariants 
$M/\lbrace{B,M\rbrace}$.
\end{definition}  

\begin{proposition} \label{ssprop} Let $A$ be a nonnegatively filtered algebra 
with ${\rm gr}(A)=A_0$, and let $Z$ be the center of $A_0$. 
Then the space ${\rm gr}(A/[A,A])$ is a quotient of
$HP_0(Z,A_0/[A_0,A_0])$. 
\end{proposition}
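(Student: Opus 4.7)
The plan is to put the natural quotient filtration on $A/[A,A]$, so that $F_i(A/[A,A])$ is the image of $F_i A$, and to identify ${\rm gr}(A/[A,A]) = A_0/K$, where $K \subseteq A_0$ is the graded subspace whose degree-$i$ component $K_i$ is the image in $A_0^i = F_i A / F_{i-1} A$ of $F_i A \cap [A,A]$. I will then construct the desired surjection from $HP_0(Z, A_0/[A_0, A_0])$ onto $A_0/K$ by realizing its defining relations as symbols of honest commutators in $A$.

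First I would check that $[A_0, A_0] \subseteq K$: for $\bar a \in A_0^i$ and $\bar b \in A_0^j$ with arbitrary lifts $\tilde a \in F_i A$, $\tilde b \in F_j A$, the commutator $[\tilde a, \tilde b]$ lies in $F_{i+j} A \cap [A,A]$ and has symbol $[\bar a, \bar b] \in A_0^{i+j}$. This already produces a surjection $A_0/[A_0, A_0] \twoheadrightarrow A_0/K = {\rm gr}(A/[A,A])$. Next I would check that this surjection kills the subspace $\{Z, A_0/[A_0,A_0]\}$. For $z \in Z^i$ with central-mod-$d$ lift $g(z) \in F_i A$, and for $\bar a \in A_0^j$ with any lift $\tilde a \in F_j A$, the lifting hypothesis gives $[g(z), \tilde a] \in F_{i+j-d} A \cap [A,A]$, and its symbol at degree $i+j-d$ is, by the construction recalled in Example \ref{a0z0exam}, a representative of $\{z, \bar a\} \in A_0/[A_0,A_0]$. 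Hence $\{z, \bar a\}$ lifts into $K_{i+j-d}$ modulo $[A_0, A_0]$, and the desired factoring through $HP_0(Z, A_0/[A_0,A_0]) = (A_0/[A_0,A_0])/\{Z, A_0/[A_0,A_0]\}$ follows.

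The main subtlety, which I expect to be the only delicate point, is that $\{z, \bar a\}$ as a literal element of $A_0$ depends on the lift of $\bar a$, whereas only its class modulo $[A_0, A_0]$ is intrinsic; since I need a symbol to sit inside $K$, this dependence must be tracked carefully. Replacing $\tilde a$ by $\tilde a + c$ with $c \in F_{j-1} A$ changes $[g(z), \tilde a]$ by $[g(z), c] \in F_{i+j-d-1} A$, which is invisible at filtration level $i+j-d$. Similarly, changing $\bar a$ by an element of $[A_0, A_0]$ amounts to adding a lift of the form $\sum_k [\tilde p_k, \tilde q_k]$, and the Jacobi identity rewrites $[g(z), [\tilde p_k, \tilde q_k]]$ as a sum of nested commutators in $A$ whose symbol at degree $i+j-d$ lies in $[A_0, A_0]$. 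This confirms that the assignment descends to the asserted well-defined surjection $HP_0(Z, A_0/[A_0, A_0]) \twoheadrightarrow {\rm gr}(A/[A,A])$.
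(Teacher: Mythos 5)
Your proof is correct and takes essentially the same route as the paper: both identify $\gr(A/[A,A])$ with $A_0/\gr([A,A])$ and then show $\gr([A,A]) \supseteq [A_0,A_0] + \{Z, A_0\}$ by taking symbols of the honest commutators $[\tilde a, \tilde b]$ and $[g(z), \tilde a]$. Your extra verification that the assignment is independent of the lifts and of representatives modulo $[A_0,A_0]$ is harmless but not needed, since only the existence of \emph{some} representative of each class $\{z,\bar a\}$ inside $\gr([A,A])$ is required, and the well-definedness of the bracket itself is part of the Poisson-module structure already recalled in Example \ref{a0z0exam}.
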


Note that, in the case when $A_0$ is commutative, Proposition \ref{ssprop} says
that $\gr HH_0(A)$ is a quotient of $HP_0(A_0)$. This follows from
the Brylinski spectral sequence.

\begin{remark}\label{ssproprem}
  A similar proposition holds, with the same proof, in the more
  general setting of a flat deformation $A_\hbar$ of $A_0$ over
  $\KK[[\hbar]]$.
\end{remark}

\begin{proof}
  It suffices to show that $\gr([A,A]) \supseteq \{Z, A_0\} + [A_0,
  A_0]$. This follows since, for homogeneous $a_0,b_0 \in A_0$, if $[a_0,b_0] \neq 0$ then
  $\gr[a, b] = [a_0,b_0]$ for any lifts $a, b$
  of $a_0, b_0$ to $A$; similarly, if $\{z, a_0\} \neq 0$ for
homogeneous $z \in Z, a_0
  \in A_0$, then $\gr [g(z), a] = \{z, a_0\}$ for any lift
  $a$ of $a_0$ to $A$.
\end{proof}

\begin{remark} Alternatively, the result follows using the
  spectral sequence for the Hochschild homology of $A$. The zeroth
  homology of the first page is $A_0/[A_0, A_0]$. The first nonzero
  differential, ${\bold d}={\bold d}_d: HH_1(A_0) \rightarrow
  A_0/[A_0, A_0]$, sends the cocycle $z \otimes a \in Z^1(A_0)$
  for $z \in Z, a \in A_0$ to the element $\{z, a\}$, and hence the
  image of ${\bold d}$ contains $\{Z, A_0/[A_0, A_0]\} \subseteq A_0/[A_0,
  A_0]$.  In the case that $A_0 = Z$, this is just the Brylinski
  spectral sequence.
\end{remark} 

For any Poisson variety $X$, we will abbreviate ``quasicoherent sheaf
of Poisson modules over $\OO_X$'' as ``Poisson module on $X$,'' and
``coherent sheaf of Poisson modules over $\OO_X$'' as ``coherent
Poisson module,'' analogously to the terminology for $D$-modules.
\begin{definition} 
Let $X$ be a Poisson variety, and 
$N$ be a 
Poisson module on $X$. Then the right $D$-module 
$M(X,N)$ on $X$ is defined by the formula 
$$
M(X,N)=\langle {\rm HVect}(X)\rangle \setminus N\otimes_{\OO_X}D_X,
$$
where the action of vector fields is diagonal.

More generally, if $Y$ is another variety and $\phi: X \rightarrow Y$
is a map, one may define
$$
M_\phi(X, N) = \langle {\rm HVect}(X,\phi^*(\OO_Y)) \rangle \setminus N \otimes_{\OO_X} D_X.
$$
\end{definition}

\begin{remark}
  Let $i: X\to V$ be an embedding of $X$ into a smooth variety.  Then
  $i_*M(X,N)$ can be defined explicitly as the quotient of
  $N\otimes_{\OO_V} D_V$ by the left action of vector fields on $V$
  that preserve the ideal of $X$ and restrict to Hamiltonian vector
  fields on $X$, and by the left action of functions that vanish on
  $X$.  Similarly, $i_* M_\phi(X,N)$ is the quotient of $N
  \otimes_{\OO_V} D_V$ by the left action of vector fields on $V$ that
  preserve the ideal of $X$ and restrict to Hamiltonian vector fields
  on $X$ associated to functions pulled back from $Y$, and by the left
  action of functions that vanish on $X$.
\end{remark}

\begin{example}
$M_\phi(X,\OO_X)=M_\phi(X)$. 
\end{example}

The following proposition is immediate from (and motivates) the definitions.
\begin{proposition}\label{hp0modprop}
  Let $X$ be affine, and $p$ be the map of $X$ to a point. Then the
  underived direct image $p_0(M(X,N))$ is naturally isomorphic to
  $HP_0(\OO_X,N)$. Similarly, given a Poisson map $\phi: X
\rightarrow Y$ of affine Poisson varieties,
  $p_0(M_\phi(X, N)) \cong HP_0(\OO_Y, N)$.  In particular, if
  $M_\phi(X,N)$ is holonomic, then $HP_0(\OO_Y, N)$ is finite-dimensional.
\end{proposition}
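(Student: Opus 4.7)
The proof will closely mirror that of Proposition \ref{hp0}, with the Poisson module $N$ replacing $\OO_X$ throughout; essentially, unwinding the explicit presentation of $M_\phi(X,N)$ in the remark preceding the proposition does all the work.

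First, I reduce to the smooth situation by choosing a closed embedding $i: X \hookrightarrow V$ into a smooth affine variety $V$ and invoking Kashiwara's theorem, which lets me replace $M_\phi(X,N)$ by $i_* M_\phi(X,N)$. The preceding remark then presents $i_* M_\phi(X,N)$ explicitly as the quotient $(N\otimes_{\OO_V} D_V)/R_\phi$, where $R_\phi$ is generated by (a) the left action of functions on $V$ vanishing on $X$, and (b) the left action of vector fields $\xi$ on $V$ that are tangent to $X$ and whose restriction equals $\xi_f$ for some $f\in\OO_Y$.

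Next, I apply the standard identity $p_0(P) = P\otimes_{D_V}\OO_V$ for right $D$-modules $P$ on smooth affine $V$. Since $D_V\otimes_{D_V}\OO_V = \OO_V$, the computation reduces to
$$
(N\otimes_{\OO_V} D_V)\otimes_{D_V}\OO_V \;=\; N\otimes_{\OO_V}\OO_V \;=\; N,
$$
and it remains to identify the image of $R_\phi$ inside $N$. Relations of type (a) act trivially because $I_X$ annihilates $N$. For a type (b) generator, the diagonal left action sends $n\otimes 1$ to $\{f,n\}\otimes 1 + n\otimes \xi$, in which the second summand dies after $\otimes_{D_V}\OO_V$ because $\xi\cdot 1 = 0$ in $\OO_V$. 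Hence the image of $R_\phi$ is exactly $\{\phi^*\OO_Y, N\}$, giving
$$
p_0(M_\phi(X,N))\;\cong\; N/\{\phi^*\OO_Y, N\}\;=\;HP_0(\OO_Y, N).
$$
Specializing to $Y=X$, $\phi=\id$ recovers the statement for $M(X,N)$. The final finite-dimensionality assertion follows immediately from the fact that the direct image of a holonomic $D$-module is holonomic (cf.\ Corollary \ref{fidi}), and a holonomic $D$-module on a point is a finite-dimensional vector space.

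The only mildly delicate point, and the one where the Poisson module hypothesis is genuinely used, lies in Step 3: a vector field $\xi$ on $V$ tangent to $X$ with $\xi|_X = \xi_f$ is not uniquely determined by $f$, but any two choices differ by a vector field vanishing on $X$, which acts trivially on $N$ after passage to $N\otimes_{\OO_V}\OO_V$. Thus the formula $\xi\cdot n = \{f,n\}$ descends unambiguously to $N$, and the verification is entirely parallel to the one in Proposition \ref{hp0}.
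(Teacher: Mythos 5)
Your proposal is correct and follows the same route the paper intends: the paper declares Proposition \ref{hp0modprop} immediate from the definitions, and its proof of the special case (Proposition \ref{hp0}) is precisely your computation, namely embedding $i: X\to V$ into a smooth affine variety and identifying $p_0$ with $-\otimes_{D_V}\OO_V$ applied to the explicit presentation of $i_*M_\phi(X,N)$, with the Leibniz rule for the Poisson module showing the relations map onto exactly $\{\phi^*\OO_Y,N\}$. The only nitpick is that to see the image of the relation submodule is contained in (not just contains) $\{\phi^*\OO_Y,N\}$ one should note that a general relation element $\xi\cdot(n\otimes D)\otimes h$ maps to $D(h)\{f,n\}+\xi(D(h))n=\{f,D(h)n\}$, which your well-definedness remark essentially covers.
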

Similarly, the other results of this section all extend to the setting of Poisson modules:
\begin{proposition}
\begin{enumerate}
\item[(i)] If $G$ is a finite group acting on $X$ by Poisson automorphisms, $\pi: X \rightarrow X/G$ the quotient map, $\phi: X \rightarrow Y$ a morphism factoring through $\pi$, and $N$ a $G$-equivariant Poisson module, then $\pi^G_*(M_\phi(X,N)) = M_\phi(X/G,\pi_*(N)^G)$.
\item[(ii)] Given an open subset $U \subseteq X$, $M_\phi(U,N)$ is the restriction of $M_\phi(X, N)$ to $U$.
\item[(iii)] If $i: Z \to X$ is a closed Poisson embedding, then there is
a natural epimorphism $\theta_i: M_\phi(X, N) \to i_* M_\phi(Z,i^* N)$.
\end{enumerate}
\end{proposition}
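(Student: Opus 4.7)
All three assertions are Poisson-module analogues of Corollary \ref{pushpois}, Lemma \ref{locpm}, and Proposition \ref{closedemb} respectively, and my plan is to follow each of those proofs with $D_X$ replaced by $N \otimes_{\OO_X} D_X$ throughout. Part (ii) is in fact immediate from the sheaf-theoretic nature of the construction: each of $\OO_X$, $D_X$, $N$, the Poisson bivector, and the subsheaf $\phi^*\OO_Y$ restricts to $U$, and consequently so does the subsheaf $\text{HVect}(X, \phi^*\OO_Y)$, which pulls back to $\text{HVect}(U, (\phi|_U)^*\OO_Y)$.

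For part (i), the key step is to upgrade Lemma \ref{push} to the identity
$$\pi_*^G(N \otimes_{\OO_X} D_X) \;=\; \pi_*(N)^G \otimes_{\OO_{X/G}} D_{X/G}.$$
Writing $\Ind(M) := M \otimes_{\OO} D$ for the induction functor from $\OO$-modules to right $D$-modules, we have $N \otimes_{\OO_X} D_X = \Ind(N)$. Since $\pi$ is finite (hence proper), the argument from Lemma \ref{push} gives $\pi_* \circ \Ind = \Ind \circ \pi_*$, and taking $G$-invariants commutes with $\Ind$ (since $G$ acts by $\OO_{X/G}$-linear maps on $\pi_* N$ and trivially on $\OO_{X/G}$); combining these identifies $\pi_*^G(\Ind(N))$ with $\Ind(\pi_*(N)^G)$. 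Both $\pi_*^G(M_\phi(X,N))$ and $M_\phi(X/G, \pi_*(N)^G)$ are then obtained from this common object by quotienting by the action of the Hamiltonian vector fields associated to $\phi^*\OO_Y$; because $\phi$ factors through $\pi$, the relevant functions and vector fields match on the two sides, so the quotients coincide.

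For part (iii), I would reproduce the proof of Proposition \ref{closedemb}. Choosing a closed embedding $X \hookrightarrow V$ into a smooth affine $V$, both $M_\phi(X,N)$ and $i_*M_\phi(Z,i^*N)$ are realized as quotients of $N \otimes_{\OO_V} D_V$, and it suffices to check that the relations defining the latter contain those defining the former. The new relations on the $Z$-side are generated by sections of $I_{Z/V}/I_{X/V} \cong I_{Z/X}$ (implementing the passage $N \rightsquigarrow i^*N$) and by vector fields on $V$ tangent to $Z$ whose restriction to $Z$ is Hamiltonian for some function in $\phi^*\OO_Y$. Since $i$ is a Poisson embedding, every Hamiltonian vector field on $X$ arising from $\phi^*\OO_Y$ is automatically tangent to $Z$ and restricts to the Hamiltonian vector field of the same pulled-back function, yielding the natural surjection $\theta_i$. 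The main point of technical care lies in part (i), where one must verify that the isomorphism $\pi_*^G(\Ind(N)) \cong \Ind(\pi_*(N)^G)$ is compatible with the Hamiltonian vector field action used to form the quotient; the remaining parts are essentially formal.
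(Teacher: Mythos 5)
Your proposal is correct and is exactly the argument the paper intends: the paper states this proposition without proof, presenting it as the routine extension of Lemma \ref{push}, Corollary \ref{pushpois}, Lemma \ref{locpm}, and Proposition \ref{closedemb} obtained by replacing $D_X$ with $N\otimes_{\OO_X}D_X$, which is precisely what you carry out (including the key identification $\pi_*^G(\Ind(N))\cong\Ind(\pi_*(N)^G)$ and its compatibility with the Hamiltonian vector fields coming from $\phi^*\OO_Y$). No gaps to flag.
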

\begin{remark}
  Analogously to Remark \ref{pdr}, one may define the Poisson-de Rham
  homology of $X$ with coefficients in a Poisson module as
  $HP^{DR}_j(X, N) := L_j p_0 M(X,N)$, i.e., the $j$-th derived
  functor of $HP_0(X, -)$ on the category of Poisson modules, applied
  to $N$.  Again, when $X$ is symplectic, this coincides with the de
  Rham cohomology of $X$ with coefficients in the $D$-module $N$
  (cf.~Example \ref{pmexams}.\ref{pmdmex}).
\end{remark}

\section{The holonomicity theorem and applications} 

\subsection{The holonomicity theorem}

\begin{theorem}\label{holo}
  Assume that $X$ has finitely many symplectic leaves.  Then, for any
  finite morphism $\phi: X\to Y$ and any coherent Poisson module $N$
   on $X$, the $D$-module $M_\phi(X,N)$ is holonomic.
In particular, the $D$-module $M_\phi(X)$ is holonomic for any finite 
morphism $\phi$.  
\end{theorem}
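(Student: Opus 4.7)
My plan is to bound the characteristic variety of $M_\phi(X,N)$ by a union of conormal bundles to the (finitely many) symplectic leaves of $X$, which will give holonomicity. First I embed $i\colon X\hookrightarrow V$ into a smooth affine variety and work with $i_*M_\phi(X,N)$, a coherent right $D_V$-module as described in \S 2.2; the two key tools will be a direct symbol computation exploiting the finiteness of $\phi$, and the Gabber--Kashiwara involutivity theorem.

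Filtering by the order filtration on $D_V$, the symbol ideal in $\OO_{T^*V}$ contains $I_X$ together with the linear functions $\sigma_f(x,\eta)=\langle\eta,\xi_f(x)\rangle$ for $f\in\phi^*\OO_Y$, so
\[
\operatorname{SS}(M_\phi(X,N))\subseteq\{(x,\eta)\in T^*V:x\in X,\ \eta\perp H^\phi_x\},\qquad H^\phi_x:=\operatorname{span}\{\xi_f(x):f\in\phi^*\OO_Y\}.
\]
Let $C$ be an irreducible component and $W:=\pi(C)\subseteq X$. By involutivity, $\operatorname{SS}$ is preserved by the Hamiltonian flow on $T^*V$ of each $\sigma_f$, which projects along $\pi$ to the flow of $\xi_f$ on $V$; hence $W$ is preserved by the $\phi^*\OO_Y$-Hamiltonian flow on $X$. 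Because $\phi$ is finite and every symplectic leaf $L$ is smooth, $\phi|_L$ is generically \'etale on each leaf in characteristic zero: writing $dg=\sum_k b_k\,d(\phi^*f_k)$ on the \'etale locus of $\phi|_L$ and contracting with the Poisson bivector yields $\xi_g=\sum_k b_k\,\xi_{\phi^*f_k}$, so $H^\phi_x=T_xL$ at generic $x\in L$. Combined with the finiteness of symplectic leaves (which makes each leaf Zariski locally closed), the irreducible $\phi^*\OO_Y$-invariant subvariety $W$ must be the closure $\overline L$ of a single symplectic leaf, with $\dim W=\dim L$. Then at generic $x\in L\cap W$ one has $\dim H^\phi_x=\dim W$ and $C_x\subseteq(H^\phi_x)^\perp$ has dimension at most $\dim V-\dim W$, whence
\[
\dim C=\dim W+\dim C_x\leq\dim V,
\]
and since $\operatorname{SS}$ has finitely many components, $\dim\operatorname{SS}\leq\dim V$, so $M_\phi(X,N)$ is holonomic.

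The trickiest step will be the identification $W=\overline L$ for a symplectic leaf $L$: the invariance of $W$ under the $\phi^*\OO_Y$-Hamiltonian flow comes cleanly from involutivity and the projection formula, but showing that this forces $W$ to be a single leaf closure requires checking that $\phi^*\OO_Y$-Hamiltonian orbits coincide generically with symplectic leaves (by the \'etaleness argument above), so that the finiteness of leaves translates to finiteness of irreducible invariant subvarieties. Once this is in place, the dimension computation is routine, and the argument transparently reduces holonomicity to the two hypotheses that $X$ has finitely many symplectic leaves and that $\phi$ is finite.
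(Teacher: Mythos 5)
Your first step is the same as the paper's: embed $X$ in a smooth $V$ and bound the singular support of $i_*M_\phi(X,N)$ by $Z=\{(x,\eta):x\in X,\ \eta\perp H^\phi_x\}$ (the paper gets this directly by taking associated graded of the defining relations; Gabber's involutivity theorem is not needed). The genuine gap is the key step where you claim that an irreducible closed subvariety $W=\pi(C)$ that is invariant under the vector fields $\xi_{\phi^*f}$, $f\in\OO_Y$, must be the closure of a single symplectic leaf with $\dim W=\dim L$. That claim is false: these vector fields span $T_xL$ only at \emph{generic} points of a leaf $L$, and an invariant $W$ may sit entirely inside the degenerate locus where the rank of $d\phi(x)|_{T_xL}$ drops, so your generic-\'etaleness observation says nothing about generic points of $W$. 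Concretely, let $X=V$ be a symplectic vector space with the action of $G=\{\pm 1\}\subset Sp(V)$ and $\phi:V\to V/G$ the quotient (a finite map, and $V$ has a single leaf). Every $f\in\OO_{V/G}=\OO_V^G$ has vanishing differential at $0$, so $W=\{0\}$ is invariant but is not a leaf closure; more generally each fixed space $V^K$ is invariant, and Theorem \ref{quotsin} shows that the singular support of $M_\phi(V)$ really does contain the conormals to all the $V^K$, whose projections are not leaf closures. So the equality $\dim H^\phi_x=\dim W$ at generic $x\in W$, on which your final count rests, is exactly what remains to be proved.

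What your argument needs is the inequality $\dim H^\phi_x\ge\dim W$ at generic $x\in W$; since $\dim H^\phi_x$ equals the rank of $d\phi(x)|_{T_xL}$, this is equivalent to the paper's key lemma that the locus $X_j^r\subseteq X_j$ where this rank equals $r$ has dimension at most $r$. This is precisely where the finiteness of $\phi$ enters in the paper: if $X'$ is the closure of $X_j^r$ and $\dim X'=s>r$, then $\phi|_{X'}:X'\to\phi(X')$ is a finite dominant morphism of $s$-dimensional varieties, so by generic smoothness in characteristic zero $d\phi|_{T_xX'}$ has rank $s$ at a generic point, and since $T_xX'\subseteq T_xX_j$ this forces the rank of $d\phi(x)|_{T_xX_j}$ to be at least $s>r$, a contradiction. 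With this lemma in place (applied either to the rank loci, as in the paper, or to $W=\pi(C)$ itself, noting $\phi|_W$ is finite onto its image), your fiber-dimension count $\dim C\le\dim W+(\dim V-\dim H^\phi_x)\le\dim V$ goes through, and in fact the invariance of $W$ under the flows and the appeal to involutivity become unnecessary.
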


\begin{remark}
  Curiously, \cite{Kalss} calls varieties with finitely many
  symplectic leaves ``holonomic''.  This terminology fits perfectly
  with Theorem \ref{holo}.
\end{remark}

\begin{proof} 
  The question is local, so we may assume that $X$ and $Y$ are affine.
  Denote the symplectic leaves of $X$ by $X_j$, for $j=1,...,m$.  Let
  $X_j^r$ be the set of points $x\in X_j$ such that the rank of the
  linear map $d\phi(x)|_{T_xX_j}$ equals $r$.

  Fix an embedding $i: X\to V$ of $X$ into a smooth affine variety
  $V$. It suffices to show that the $D$-module $i_*M_\phi(X,N)$ is
  holonomic. To this end, we will show that the singular support of
  $i_*M_\phi(X,N)$ is Lagrangian in $T^*V$.

  Let $Z\subset T^*V$ be the variety of pairs $(x,p)$ such that
  $x\in X$ and $b(p,d\phi^*(f)(x))=0$ for every $f\in \OO_Y$ (where $b$ denotes 
the Poisson bivector of $X$). Then, it
  is easy to see (by taking the associated graded of the equations
  defining $i_*M_\phi(X,N)$) that the singular support of $i_*M_\phi(X,N)$
  is contained in $Z$. So, it suffices to show that all components of
  $Z$ have dimension at most $d=\dim(V)$.

  Let $\widetilde X^r_j$ be the preimage of $X^r_j$ in $Z$.  Then,
  $\widetilde X^r_j$ is a vector bundle over $X^r_j$ of rank
  $d-r$. The variety $Z$ is the union of the $\widetilde X^r_j$, so it
  suffices to show that $\dim X^r_j\le r$ for all $r$ and $j$.

  It suffices to restrict our attention to a single symplectic leaf,
  i.e., to assume that $X$ is symplectic. As before, denote by $X^r$
  the set of points of $X$ where $d\phi$ has rank $r$.

  Suppose that, for some $r$, the dimension of $X^r$ is $s>r$.  Let
  $X'$ be the closure of $X^r$, and set $Y':=\phi(X')$.  Then,
  $\phi: X'\to Y'$ is a finite map of $s$-dimensional varieties.
  Thus, if $y\in Y'$ is a generic point, for every $x\in
  \phi^{-1}(y)$,
  $d\phi(x)|_{T_xX'}$ is an isomorphism $T_xX'\to T_yY'$.  So, the
  rank of $d\phi(x)$ is at least $s$, which is a contradiction, since
  it can be at most $r$.
\end{proof}

\subsection{Proofs of Theorems \ref{th1} and \ref{th2} and
Corollaries \ref{co1} and \ref{co2}}

Theorem \ref{holo} together with Corollary \ref{fidi} and Proposition \ref{hp0modprop} implies Theorem
\ref{th1}. Corollary \ref{co1} follows immediately, as does Corollary
\ref{co2}, once we set $Y=X/G$, and let $\phi: X\to X/G$ be the
projection.

To prove Theorem \ref{th2},
note that Theorem \ref{th1} implies that
$HP_0(Z,A_0/[A_0, A_0])$ is finite-dimensional. Then,
$A/[A,A]$ is finite-dimensional by Proposition \ref{ssprop}.
We deduce that $A$ has finitely many irreducible finite-dimensional
representations because the characters of distinct irreducible
finite-dimensional representations of $A$ are linearly independent
linear functionals on $A/[A,A]$.

\subsection{Invariant distributions supported at a point}\label{invdis}

Let $X$ be an algebraic variety, and $x\in X$ be a point.  Define a
{\it distribution on $X$ supported at $x$} to be a linear functional
$\xi: \OO_{X,x}\to \mathbb K$ on the local ring of $X$ at $x$ which
annihilates a power of the maximal ideal ${\mathfrak m}_x$.

Now, let $X$ be a Poisson variety, $Y$ be another variety, and
$\phi: X\to Y$ be a morphism.  Let ${\mathcal M}_\phi(X,x)$ be the
space of distributions on $X$ supported at $x$ which are invariant
under Hamiltonian vector fields associated to functions pulled back
from $Y$. In the special case $X=Y$, $\phi=\id$, we denote
${\mathcal M}_\phi(X,x)$ by ${\mathcal M}(X,x)$. The following
proposition follows directly from definitions.

\begin{proposition}\label{invdi1}
  There is a natural isomorphism
  ${\mathcal M}_\phi(X,x)\cong \Hom(M_\phi(X),\delta_x)$, where
  $\delta_x$ is the delta-function $D$-module of $x \in X$.
  In particular, ${\mathcal M}(X,x)\cong \Hom(M(X),\delta_x)$.
\end{proposition}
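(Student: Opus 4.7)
The plan is to unravel the right $D$-module structure on the delta-function module $\delta_x$ and match it term-by-term with the definition of $\mathcal{M}_\phi(X,x)$; the proposition is really a bookkeeping exercise, so the main task is to keep sign conventions and module structures straight.

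First, I would localize and fix an embedding $i: X \to V$ of a neighborhood of $x$ into a smooth affine variety $V$. By Kashiwara's theorem (already invoked in \S 2.2), there is a natural identification $\Hom_{D_X}(M_\phi(X), \delta_x) \cong \Hom_{D_V}(M_\phi(X,i), \delta_x)$, where $\delta_x$ on the right is now the delta-function right $D_V$-module. Explicitly, $\delta_x$ is realized as the space of distributions on $V$ supported at $x$, with $f \in \OO_V$ acting by $(\xi \cdot f)(g) = \xi(fg)$ and a vector field $v$ acting by $(\xi \cdot v)(g) = -\xi(v(g))$.

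Next, I would use the presentation $M_\phi(X,i) = D_V / J$, where $J$ is the right ideal generated by $I_X$ together with the vector fields on $V$ tangent to $X$ whose restriction is $\xi_f$ for some $f \in \phi^*(\OO_Y)$. A right $D_V$-module map $M_\phi(X,i) \to \delta_x$ is uniquely determined by the image $\xi$ of the generator $1$, subject to $\xi \cdot J = 0$. Unpacking: the vanishing $\xi \cdot I_X = 0$ is exactly the statement that $\xi$, viewed as a functional on $\OO_{V,x}$, factors through $\OO_{V,x}/I_X\OO_{V,x} = \OO_{X,x}$, i.e.\ that $\xi$ is a distribution on $X$ supported at $x$; and vanishing on the remaining generators says exactly that $\xi$ is annihilated by every Hamiltonian vector field $\xi_f$ with $f \in \phi^*(\OO_Y)$ (the chosen lift to $V$ is irrelevant because different tangent lifts differ by elements of $I_X \cdot \text{Vect}(V)$, which already lie in $J$).

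This assignment $\Phi \mapsto \Phi(1)$ is clearly natural in both $X$ and $\phi$, and inverse to it is the obvious map sending a Hamiltonian-invariant distribution $\xi$ to the $D_V$-homomorphism $P \mapsto \xi \cdot P$, which factors through the quotient by $J$ exactly because of the two vanishing conditions above. The only mild subtlety is the sign in the right action of vector fields on distributions, but $\xi \cdot \xi_f = 0$ and $\xi \circ \xi_f = 0$ are manifestly the same condition, so this causes no trouble; there is no substantive obstacle.
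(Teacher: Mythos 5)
Your proof is correct and matches the paper's intent: the paper simply asserts that Proposition \ref{invdi1} ``follows directly from definitions,'' and your argument is precisely that definitional unwinding, using the presentation of $M_\phi(X,i)$ as $D_V$ modulo the right ideal generated by $I_X$ and tangent lifts of the Hamiltonian vector fields, together with Kashiwara's theorem. (The only cosmetic point: two tangent lifts of $\xi_{\phi^*f}$ need not differ by an element of $I_X\cdot\mathrm{Vect}(V)$ literally, but their difference sends $\OO_V$ into $I_X$, which is all you use, so the argument stands.)
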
  

\begin{remark} 
  It is easy to see that ${\mathcal M}(X,x)\ne 0$ if and only if
  $x\in X$ is a one-point symplectic leaf. More generally, if $x$ is a
  point of an arbitrary (locally closed) symplectic leaf
  $X_0 \subseteq X$, then $\mathcal{M}_\phi(X,x) \neq 0$ if and only
  if the restriction of $d \phi$ to $T_x X_0$ is zero.  Note also
  that, when $X$ and $Y$ are affine, ${\mathcal M}_\phi(X,x)$ is a
  subspace of the space $(\OO_X/\lbrace{\OO_Y,\OO_X\rbrace})^*$.
\end{remark}

Now suppose that $X$ has finitely many symplectic leaves, and that
$\phi$ is a finite morphism. In this case, Theorem \ref{th1} implies
that ${\mathcal M}_\phi(X,x)$ is finite-dimensional. We now obtain an
explicit upper bound for the dimension of this space.

We may assume that $X$ and $Y$ are affine. Fix a closed embedding $i$
of $X$ into a finite-dimensional vector space $V$, so that $x$ maps to
the origin. Let $\widetilde {Z}$ be the closed subscheme of $T^*V$
defined by the equation $b(p,d\phi^*(f)(v))=0$, $v\in i(X)$ (its
reduced part is the variety $Z$ from the proof of Theorem \ref{holo}).
Let $f_1, \ldots, f_m$ be generators of $\OO_Y$, and $w_j$, $j=1, \ldots, m$, be
the Hamiltonian vector fields on $X$ associated to the functions
$\phi^*(f_j)$. Let $v_j$ be liftings of $w_j$ to vector fields on $V$,
and let $g_k$, $k=1, \ldots, r$, be generators of the defining ideal $I_X$
of $X$ in $V$. For every $p\in V^*$, define the ideal
$J_p' \subset \OO_V$ generated by the elements $g_k$ and $(v_j,p)$.  It
follows from the proof of Theorem \ref{holo} that, for Zariski generic
$p$, the ideal $J_p'$ has finite codimension.  Furthermore, let $J_p \supseteq J_p'$ be the primary component of $J_p'$ corresponding to the point $(0,p)$. 
Note that, for generic $p$, the
 codimension of $J_p$
is the multiplicity of the component $V^*$ of the scheme
$\widetilde{Z}$ (which is well-defined since $Z$ is Lagrangian).
\begin{proposition}\label{invdi2} 
  If $X$ has finitely many symplectic leaves and $\phi$ is finite,
  then the dimension of ${\mathcal M}_\phi(X,x)$ is dominated by the
  codimension of the ideal $J_p$ for all $p \in V^*$.
\end{proposition}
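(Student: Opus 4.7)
The plan is to bound the dimension of $\mathcal{M}_\phi(X,x)$ by the multiplicity of the conormal component $[T_x^*V]$ in the characteristic cycle $CC(M_\phi(X,i))$, and then to compare that multiplicity with $\dim\OO_V/J_p$.

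First, by Proposition~\ref{invdi1}, $\mathcal{M}_\phi(X,x)\cong \Hom_{D_V}(M_\phi(X,i),\delta_x)$. I will invoke the general fact that for any holonomic right $D_V$-module $M$ on a smooth variety $V$ and any point $x\in V$, $\dim \Hom_{D_V}(M,\delta_x)\leq [M:\delta_x]$, the multiplicity of $\delta_x$ as a composition factor of $M$. This inequality follows by induction on the length of $M$ using the long exact Hom-sequence associated to a short exact sequence of $D_V$-modules; for simple holonomic $M$, $\Hom(M,\delta_x)$ is nonzero only when $M\cong\delta_x$, in which case it is one-dimensional. By Kashiwara's theorem, $\delta_x$ is the unique simple holonomic right $D_V$-module supported at $\{x\}$, so $[M:\delta_x]$ equals the multiplicity of $[T_x^*V]$ in $CC(M)$, since characteristic cycles are additive in short exact sequences.

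Next, from the proof of Theorem~\ref{holo}, $\gr M_\phi(X,i)$ is a quotient of $\OO_{\widetilde Z}$ with respect to the natural order filtration inherited from $D_V$. Hence the multiplicity of $[T_0^*V]$ in $CC(M_\phi(X,i))$ is at most the length of $\OO_{\widetilde Z}$ at the generic point $\eta$ of $T_0^*V\subseteq T^*V$, which by the discussion immediately preceding the proposition coincides with $\dim\OO_V/J_p$ for Zariski-generic $p\in V^*$.

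For arbitrary $p\in V^*$, upper semi-continuity of fiber length for the generically finite projection $\widetilde Z\to V^*$ yields $\dim\OO_V/J_p\geq$ length of the generic fiber at $0$, which is precisely the multiplicity of $[T_0^*V]$ in $CC(M_\phi(X,i))$; when $\dim\OO_V/J_p=\infty$ (for example, when the fiber of $\widetilde Z$ over $p$ is positive-dimensional) the asserted bound is vacuous. Combining the inequalities gives $\dim\mathcal{M}_\phi(X,x)\leq\dim\OO_V/J_p$ for every $p\in V^*$. The main obstacle will be the first step---the bound by composition multiplicity---which requires the classification of simple holonomic $D_V$-modules supported at a point together with additivity of characteristic cycles; the remaining steps are essentially bookkeeping using the explicit description of $\gr M_\phi(X,i)$ in terms of $\widetilde Z$ and standard upper semi-continuity in commutative algebra.
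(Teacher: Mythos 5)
Your argument is correct and is essentially the paper's own proof with the details filled in: reduce via Proposition \ref{invdi1} to bounding $\dim\Hom(M_\phi(X,i),\delta_x)$ by the multiplicity of $T_x^*V$ in the characteristic cycle, bound that by the multiplicity of the component $V^*$ of $\widetilde{Z}$ (which is $\operatorname{codim}J_p$ for generic $p$), and pass to all $p$ by semicontinuity, the generic codimension being minimal. One small overstatement: $[M:\delta_x]$ need not \emph{equal} the multiplicity of $T_x^*V$ in $CC(M)$, since simple holonomic modules not supported at $x$ (e.g.\ intermediate extensions from the complement, or modules with irregular singularities) can also have $T_x^*V$ in their characteristic cycles; however, the inequality $[M:\delta_x]\le \operatorname{mult}_{T_x^*V}CC(M)$, which is all your chain of estimates uses, is the true direction, so the proof stands.
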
 
\begin{proof}
  This follows from Proposition \ref{invdi1}, since the dimension of
  $\Hom(M_\phi(X),\delta_x)$ is dominated by the multiplicity of
  $V^*$ in the singular support of $M_\phi(X,i)$, which is the
  codimension of $J_p$ for generic $p$, and hence the minimal
  codimension.
\end{proof}
As remarked above, this upper bound on the dimension of
$\Hom(M_\phi(X),\delta_x)$ is the multiplicity of the component of
$V^*$ in the scheme $\widetilde{Z}$.
\begin{remark}\label{invdi2.5} 
  Note that the proof of Proposition \ref{invdi2} has a purely local
  nature. It does not involve direct images to the point, and uses
  only basic properties of holonomic $D$-modules.  In particular, it
  also applies to analytic varieties, for which, as we mentioned,
  $HP_0$ may be infinite-dimensional (for a reference on the theory of
  $D$-modules on analytic varieties, see, e.g., \cite{Bj}).
\end{remark}

\begin{proposition}\label{invdi3} 
  Suppose that $X$ and $Y$ are affine, and that $\OO_X$ and $\OO_Y$
  are nonnegatively graded with finite-dimensional graded components.
  Assume also that the Poisson bracket on $X$ is homogeneous, and that
  the map $\phi^*$ preserves degree.  Then, under the assumptions of
  Proposition \ref{invdi2},
  ${\mathcal M}_\phi(X,x)=(\OO_X/\lbrace{\OO_Y,\OO_X\rbrace})^*$.
  Hence, $\dim(\OO_X/\lbrace{\OO_Y,\OO_X\rbrace})\le {\rm
    codim}(J_p)$.
\end{proposition}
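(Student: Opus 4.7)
The plan is to show that the natural injection $\mathcal{M}_\phi(X,x) \hookrightarrow (\OO_X/\{\OO_Y,\OO_X\})^*$ is in fact an isomorphism, which reduces to showing that every linear functional on the quotient $Q := \OO_X/\{\OO_Y,\OO_X\}$ automatically annihilates a power of the maximal ideal $\m_x$. To set up the easy direction, I would note that any distribution $\xi$ supported at $x$ is a functional on $\OO_{X,x}/\m_x^k$ for some $k$, and the composite $\OO_X \to \OO_{X,x}/\m_x^k$ is surjective since $X$ is of finite type; thus $\xi$ is determined by its restriction to $\OO_X$, and invariance under Hamiltonian vector fields of $\phi^*(\OO_Y)$ is precisely the condition that this restriction kills $\{\OO_Y,\OO_X\}$.

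The key input is the grading. Because the bracket is homogeneous (of degree $-d$) and $\phi^*$ preserves degree, the map $\OO_Y \otimes \OO_X \to \OO_X$, $f \otimes g \mapsto \{f,g\}$, is homogeneous, so $\{\OO_Y,\OO_X\}$ is a graded subspace of $\OO_X$ and $Q$ inherits a non-negative $\ZZ$-grading. Theorem \ref{th1} guarantees $Q$ is finite-dimensional; combined with the finite-dimensionality of each graded piece of $\OO_X$, this forces some integer $N$ with $\OO_X^{(i)} \subseteq \{\OO_Y,\OO_X\}$ for every $i > N$. We take $x$ to be the vertex of the graded cone, i.e. the point with $\m_x = \bigoplus_{i>0} \OO_X^{(i)}$, which is the natural choice (and we may reduce to $\OO_X^{(0)} = \mathbb K$ if needed, since the hypotheses guarantee $\Spec \OO_X^{(0)}$ is zero-dimensional).

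To close the argument, take $T \in Q^*$ and lift it to a functional on $\OO_X$ vanishing on $\{\OO_Y,\OO_X\}$. Since products of $N+1$ elements from $\bigoplus_{i \geq 1}\OO_X^{(i)}$ have degree at least $N+1$, one has $\m_x^{N+1} \subseteq \bigoplus_{i > N} \OO_X^{(i)} \subseteq \{\OO_Y,\OO_X\}$, so $T$ annihilates $\m_x^{N+1}$ and thus descends to a functional on $\OO_{X,x}/\m_x^{N+1}$ — that is, a distribution supported at $x$. By construction it is Hamiltonian-invariant relative to $\phi^*(\OO_Y)$ and hence lies in $\mathcal{M}_\phi(X,x)$, proving the reverse containment. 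The stated codimension inequality then follows immediately from Proposition \ref{invdi2}. I do not anticipate any serious obstacle; the only delicate point is identifying $x$ with the vertex of the graded cone so that $\m_x$ is captured by the positive-degree part of $\OO_X$.
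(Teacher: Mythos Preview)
Your proof is correct and follows essentially the same approach as the paper, which simply notes that the first statement follows from $\OO_X/\{\OO_Y,\OO_X\}$ being nonnegatively graded and finite-dimensional (by Theorem~\ref{th1}), and the second from Proposition~\ref{invdi2}. You have unpacked this terse argument in full; one minor quibble is that the bracket need only be homogeneous of some fixed degree, not necessarily $-d$, but this does not affect anything.
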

\begin{remark}
  In the case that the Poisson bracket has negative degree, the
  assumption that each graded component is finite-dimensional is
  superfluous.  Indeed, it is enough to show that the degree-zero part
  is finite-dimensional. Since it is Poisson central, if it were not
  finite-dimensional, then the Poisson center itself would be
  infinite-dimensional, and in particular admit infinitely many
  distinct characters.  However, for every character of the Poisson
  center, the associated subvariety is Poisson and hence contains at
  least one symplectic leaf, so there can only be finitely many.
\end{remark}
\begin{remark}
  In the graded situation of the proposition, $J_p = J_p'$, since its
  support, projected to $X$, is conical and zero-dimensional.
\end{remark}
\begin{proof}
  The first statement follows from the fact that the space
  $\OO_X/\lbrace{\OO_Y,\OO_X\rbrace}$ is nonnegatively graded and
  finite-dimensional.  The second statement follows from the first one
  and Proposition \ref{invdi2}.
\end{proof}

Proposition \ref{invdi3} combined with computer algebra systems can be
used to obtain explicit upper bounds for the dimension of
$\OO_X/\lbrace{\OO_Y,\OO_X\rbrace}$ in specific examples, e.g., when
$X=V$ is a finite-dimensional symplectic vector space and $Y = V/G$,
where $G\subset Sp(V)$. This will be discussed in more detail in a
subsequent paper.

\subsection{Generalization to Poisson schemes of finite type}\label{sche}

It is straightforward to generalize the above results to the case
where $X$ and $Y$ are separated Poisson schemes of finite type, which
may be non-reduced.  Indeed, if $X$ is a Poisson scheme, $Y$ is
another scheme (both separated and of finite type over $\mathbb K$), and
$\phi: X \rightarrow Y$ is a morphism, then we may define $M_\phi(X)$,
and hence also $M(X)$, in the same way as when $X$ and $Y$ were
varieties.\footnote{Recall that a $D$-module on $X$ is, by definition,
  the same as a $D$-module on the reduced part $X_{\mathrm{red}}$.}
The results of \S 2 continue to hold without change.

Recall from \cite[Corollary 1.4]{Kalnpa} that, if $X$ is a Poisson
scheme, its reduction, $X_{\mathrm{red}}$, is also Poisson.  We say that $X$
has finitely many symplectic leaves if so does $X_{\mathrm{red}}$.  Using this
terminology, we claim that Theorem \ref{holo}, and hence all the
results of \S 1, extend to the case of Poisson schemes of finite type.

Indeed, it suffices to show that, if $X$ is affine, and $i: X\to V$ is
a closed embedding of $X$ into a smooth affine variety $V$, then the
singular support of $i_*M_\phi(X,N)$ is Lagrangian in $T^*V$.  This
follows as in the reduced case, replacing $X$ and $Y$
with $X_{\mathrm{red}}$ and $Y_{\mathrm{red}}$.

\subsection{The case of formal schemes}\label{fsche}

We expect that the above results can be generalized to the setting of
formal schemes of finite type, at least under the assumption that the
schemes in question are affine. In this paper, we will give only two
results in this direction, in order to prove Corollary \ref{fintrace}
and its generalization, Corollary \ref{fintracepm}, which are used in
the appendix.

Let $X$ be an affine formal scheme of finite type over ${\mathbb K}$
with a unique closed point $x$.  This means that $A=\OO_X$ is a local
${\mathbb K}$-algebra which is separated, complete, and topologically
finitely generated in the adic topology defined by its maximal ideal
${\mathfrak m}_x\subset A$.  Assume that $X$ is a Poisson scheme
(i.e., $A$ is a Poisson algebra), and that the Poisson bracket
vanishes at $x$ (i.e., ${\mathfrak m}_x$ is a Poisson ideal). In this
case, the finite-dimensional algebras $A_n:=A/{\mathfrak m}_x^n$ are
Poisson, and the maps $A_{n+1} \to A_n$ induce surjections
$HP_0(A_{n+1})\to HP_0(A_n)$.

\begin{proposition}\label{clo} 
  \begin{enumerate}
  \item[(i)] The subspace $\lbrace{A,A\rbrace}$ is closed in $A$.
  \item[(ii)] $HP_0(A)=\underleftarrow{\lim}\, HP_0(A_n)$.
  \end{enumerate}
\end{proposition}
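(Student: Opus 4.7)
The plan is to establish (i) first, from which (ii) will follow by an elementary completeness argument.

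For (i), the key step I would carry out is to show that $\{A,A\}$ is an ideal of $A$. Granting this, since $A$ is a Noetherian complete local ring, $\{A,A\}$ is finitely generated and hence closed in the $\mathfrak{m}$-adic topology: for any ideal $I$ in a Noetherian local ring, the quotient $A/I$ is also Noetherian local, and Krull's intersection theorem applied to $A/I$ gives $\bigcap_n (\mathfrak{m}^n + I) = I$. To show that $\{A,A\}$ is an ideal, the Leibniz rule $\{ab,c\} = a\{b,c\} + b\{a,c\}$ immediately yields $a\{b,c\} + b\{a,c\} \in \{A,A\}$, and $\{a, bc\} = b\{a,c\} + c\{a,b\}$ together with antisymmetry gives $a\{b,c\} - c\{a,b\} \in \{A,A\}$; combining these produces a cyclic symmetry $a\{b,c\} \equiv b\{c,a\} \equiv c\{a,b\} \pmod{\{A,A\}}$. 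These identities alone do not suffice (in the free Poisson algebra on three generators $a,b,c$, the element $a\{b,c\}$ is genuinely not in $\{A,A\}$), so the argument must exploit the structure of $A$. Writing $A$ as a quotient of a formal power series ring $\mathbb{K}[[x_1,\ldots,x_n]]$ by a Poisson ideal, I would use formal antidifferentiation (available in characteristic zero) to choose $f \in A$ with $\partial_i f = a$, whence $\{f, x_j\} = a\cdot\{x_i,x_j\} + \sum_{k\neq i}(\partial_k f)\{x_k, x_j\}$, expressing $a\{x_i,x_j\}$ as a bracket modulo terms involving the other $\{x_k, x_j\}$; iterating while tracking the $\mathfrak{m}$-adic filtration completes the argument.

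For (ii), once (i) is in hand, the natural map $\rho \colon A/\{A,A\} \to \underleftarrow{\lim}\, A/(\{A,A\} + \mathfrak{m}^n) = \underleftarrow{\lim}\, HP_0(A_n)$ has kernel $\overline{\{A,A\}}/\{A,A\}$, which vanishes by (i), giving injectivity. For surjectivity, take a compatible sequence $(\bar a_n)$ and lift each $\bar a_n$ to some $a_n \in A$. Compatibility means $a_{n+1} - a_n = c_n + m_n$ with $c_n \in \{A,A\}$ and $m_n \in \mathfrak{m}^n$. Setting $b_n := a_n - \sum_{k<n} c_k$, one has $b_{n+1} - b_n = m_n \in \mathfrak{m}^n$, so $(b_n)$ is Cauchy in the $\mathfrak{m}$-adic topology and, by completeness of $A$, converges to some $b \in A$. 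A direct check shows $b \equiv a_n \pmod{\{A,A\} + \mathfrak{m}^n}$, so $\rho(\bar b) = (\bar a_n)$, giving surjectivity.

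The main obstacle is showing that $\{A,A\}$ is an $A$-submodule. The Leibniz identities alone provide only relations modulo $\{A,A\}$, and the analogous statement fails in the free Poisson algebra; consequently, the proof must exploit both the completeness and the Noetherian structure of $A$ (the ability to perform formal antidifferentiation in the generators, along with control over the $\mathfrak{m}$-adic filtration). Once this ideal property is secured, the remainder of (i) and all of (ii) are standard.
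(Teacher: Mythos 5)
Your reduction of (ii) to (i) is fine and is exactly what the paper does, but your proof of (i) hinges on a claim that is false under the hypotheses of the proposition: that $\{A,A\}$ is an ideal of $A$. Note that Proposition \ref{clo} assumes only that $A$ is local, separated, complete, topologically finitely generated, and that the bracket vanishes at the closed point (it does not assume finitely many leaves). Take $A=\KK[[e,f,h]]$, the completion at the origin of $\mathfrak{sl}_2^*$ with its Lie--Poisson bracket $\{h,e\}=2e$, $\{h,f\}=-2f$, $\{e,f\}=h$; the bracket vanishes at the closed point, so the hypotheses hold. Here $\{A,A\}=\sum_i\{x_i,A\}=\operatorname{ad}(\mathfrak{sl}_2)(A)$, which in each graded piece is the sum of the nontrivial isotypic components, i.e., the kernel of the projection to $\mathfrak{sl}_2$-invariants. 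Thus $e=\tfrac12\{h,e\}\in\{A,A\}$, but $ef\notin\{A,A\}$, since $ef$ has a nonzero invariant (Casimir) component in $S^2\mathfrak{sl}_2$. So $\{A,A\}$ is not an $A$-submodule, and the antidifferentiation maneuver cannot be made to work: as you yourself note, it only produces congruences modulo $\{A,A\}$, and the statement it is aiming at is simply not true. Consequently the Noetherian/Krull-intersection argument has nothing to apply to.

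The paper's proof of (i) avoids ideals entirely and is purely linear-topological. From the Leibniz rule and antisymmetry one gets the identity $\{ab,c\}=\{a,bc\}+\{b,ac\}$, and induction on the degree of a monomial in topological generators $x_1,\ldots,x_n\in\mathfrak{m}_x$ (together with continuity, since $A$ is a quotient of a formal power series ring) gives $\{A,A\}=\sum_{i=1}^n\{x_i,A\}$. This exhibits $\{A,A\}$ as the image of the continuous linear map $L\colon A^n\to A$, $L(g_1,\ldots,g_n)=\sum_i\{x_i,g_i\}$, between linearly compact topological vector spaces (projective limits of finite-dimensional spaces), and the image of such a map is always closed, since $\operatorname{Im}(L)=(\ker L^*)^\perp$ for the continuous duals. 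If you want to salvage your approach, this is the step to replace: you need closedness of a countably-topologized linear image, not finite generation of an ideal.
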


\begin{proof} 
  Part (ii) follows immediately from (i), since
  $\underleftarrow{\lim}\, HP_0(A_n) = A / \overline{\{A,A\}}$.  To
  prove part (i), suppose that $A$ is topologically generated by
  $x_1, \ldots, x_n\in \mathfrak m_x$.  Then,
  $\lbrace{A,A\rbrace}=\sum_{i=1}^n \lbrace{x_i,A\rbrace}$.
  Therefore, (i) follows from the following (well known) lemma, by
  setting $V := A^n$, $W := A$, and
  $L(g_1, \ldots, g_n)=\sum_i \lbrace{x_i,g_i\rbrace}$.
\end{proof}
\begin{lemma}
  Let $V,W$ be linearly compact topological vector spaces (i.e.,
  projective limits of finite-dimensional vector spaces), and let
  $L: V\to W$ be a continuous linear operator. Then, the image of $L$
  is closed.
\end{lemma}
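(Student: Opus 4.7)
The plan is to derive the result from the defining compactness property of linearly compact spaces: any filtered family of nonempty closed affine subspaces has nonempty intersection. Fix a neighborhood basis of $0$ in $W$ consisting of open linear subspaces $W_\alpha$ of finite codimension (which exists since $W$ is linearly compact). Let $w\in \overline{L(V)}$; it suffices to produce $v\in V$ with $L(v)=w$.

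First, I would argue that for each $\alpha$, the affine subspace $L^{-1}(w+W_\alpha)\subseteq V$ is nonempty. Let $\pi_\alpha: W\to W/W_\alpha$ be the quotient map. Since $W/W_\alpha$ is finite-dimensional, every linear subspace of it is closed, so $\pi_\alpha(L(V))$ is closed in $W/W_\alpha$. Continuity of $\pi_\alpha$ gives $\pi_\alpha(w)\in \pi_\alpha(\overline{L(V)})\subseteq \overline{\pi_\alpha(L(V))} = \pi_\alpha(L(V))$, which means exactly that some element of $L(V)$ lies in the coset $w+W_\alpha$.

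Next, I would observe that each $L^{-1}(w+W_\alpha)$ is closed in $V$ (preimage of a closed set under a continuous map) and is an affine subspace (a coset of $L^{-1}(W_\alpha)$). The family $\{L^{-1}(w+W_\alpha)\}$ is filtered, since for any two indices $\alpha,\beta$ there is a $\gamma$ with $W_\gamma\subseteq W_\alpha\cap W_\beta$, and then $L^{-1}(w+W_\gamma)\subseteq L^{-1}(w+W_\alpha)\cap L^{-1}(w+W_\beta)$. Applying the linear compactness of $V$ to this family yields a point $v\in \bigcap_\alpha L^{-1}(w+W_\alpha)$. Then $L(v)-w\in \bigcap_\alpha W_\alpha=\{0\}$ by Hausdorffness of $W$, so $L(v)=w$ and $w\in L(V)$.

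The only real subtlety, and the step I would treat most carefully, is invoking the correct form of linear compactness: in contrast with ordinary topological compactness, what is needed is the finite-intersection property restricted to the class of closed \emph{linear varieties} (cosets of closed linear subspaces). This is precisely the property that characterizes linearly compact $\mathbb{K}$-vector spaces, and it is routine to derive from the presentation of $V$ as a projective limit of finite-dimensional spaces (where the analogous statement is trivial). Everything else is a straightforward unpacking of continuity and of the neighborhood basis of $0$.
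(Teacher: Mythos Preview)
Your argument is correct. It is, however, a genuinely different route from the paper's. The paper dispatches the lemma in one line of duality: the continuous dual identifies linearly compact spaces with ordinary (discrete) vector spaces, and under this exact anti-equivalence one has $\mathrm{Im}(L)=(\mathrm{Ker}\,L^*)^\perp$, which is visibly closed. Your proof instead works intrinsically on $V$, invoking the finite-intersection property for closed affine subspaces that defines linear compactness, applied to the filtered system $\{L^{-1}(w+W_\alpha)\}$.

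What each approach buys: the paper's duality argument is shorter and more conceptual, but tacitly imports the nontrivial fact that continuous duality is an exact anti-equivalence between linearly compact and discrete vector spaces (so that the factorization of $L^*$ dualizes to a factorization of $L$ with closed image). Your argument is more self-contained and makes explicit exactly which property of linearly compact spaces is doing the work; it also avoids any appeal to Hahn--Banach-type separation, which would otherwise be circular here. Incidentally, your first step can be shortened: since $w\in\overline{L(V)}$ and $w+W_\alpha$ is an open neighborhood of $w$, it meets $L(V)$, so $L^{-1}(w+W_\alpha)\neq\varnothing$ without passing to the finite-dimensional quotient.
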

\begin{proof}
  The continuous duals $V^*,W^*$ are ordinary vector spaces (possibly
  infinite-dimensional), and $\mathrm{Im}(L)=(\mathrm{Ker} L^*)^\perp$.
\end{proof}

\begin{proposition}\label{finit}
  Let $A$ and $X$ be as above, and assume that $X$ has finitely many
  symplectic leaves.  Then, the space $HP_0(A)$ is finite-dimensional.
\end{proposition}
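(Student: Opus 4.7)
My plan is to combine the inverse-limit description of Proposition \ref{clo} with a formal-local version of the holonomicity estimate from Proposition \ref{invdi2}, which by Remark \ref{invdi2.5} is purely local and therefore adapts to this setting.

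By Proposition \ref{clo}(ii), $HP_0(A)=\varprojlim_n HP_0(A_n)$ with surjective transition maps, so taking the continuous dual yields
\[
HP_0(A)^* \;=\; \varinjlim_n HP_0(A_n)^* \;=\; \mathcal{M}(X,x),
\]
the space of distributions on $X$ supported at $x$ that annihilate $\{A,A\}$; hence $\dim HP_0(A) < \infty$ is equivalent to $\dim \mathcal{M}(X,x) < \infty$. I then set up the $D$-module machinery in the formal setting: choose a surjection $\hat{B} := \mathbb{K}[[y_1,\ldots,y_N]] \twoheadrightarrow A$ with closed Poisson kernel $I$, giving an embedding $i: X \hookrightarrow \hat{V} := \operatorname{Spf}(\hat{B})$, pick topological generators $f_1,\ldots,f_m$ of $A$, and lift the Hamiltonian vector fields $\xi_{f_j}$ to derivations $v_1, \ldots, v_m$ of $\hat{B}$ preserving $I$ (such lifts exist because $\hat{B}$ is a formal power series ring). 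For $p \in V^* = (\mathbb{K}^N)^*$, let $J'_p \subset \hat{B}$ be the ideal generated by $I$ and the symbols $\langle v_j, p\rangle$, and let $J_p$ be its primary component at the closed point. The argument of Proposition \ref{invdi2} adapts to give, for generic $p$, the bound $\dim \mathcal{M}(X,x) \leq \operatorname{codim}(J_p)$.

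The core remaining step is to show $\operatorname{codim}(J_p) < \infty$, equivalently that the closed subscheme $\widetilde{Z} \subset \hat{V} \times V^*$ cut out by $I$ and the symbols has dimension at most $N = \dim \hat{V}$. I would repeat the dimension count of the proof of Theorem \ref{holo}, now applied to $\Spec(A)$ as a (non-formal) affine Poisson scheme: its finitely many symplectic leaves $X_1, \ldots, X_m$ stratify $\widetilde{Z}_{\mathrm{red}}$ as a union of the conormal bundles $N^*\overline{X_j}$, each of dimension $N$. In particular, the component $\{x\} \times V^*$ appears in $\widetilde{Z}$ with finite multiplicity, giving the finite codimension we need.

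The main obstacle is precisely this last step: checking that the stratification-by-leaves argument of Theorem \ref{holo}, whose written form uses an actual finite morphism $\phi: X \to Y$ of algebraic varieties, carries over cleanly to a formal affine Poisson scheme whose leaves are understood via $\Spec(A)$ (following the convention of \S \ref{sche}). A cleaner but technically more demanding alternative would be to algebraize $A$ as $\hat{B}_y$ for a finitely generated Poisson algebra $B$ with $\Spec(B)$ having finitely many symplectic leaves; then Corollary \ref{co1} applied to $B$, combined with the identity $\overline{\{A,A\}} = \widehat{\{B,B\}}_y$ (which follows from continuity of the bracket and Proposition \ref{clo}(i)) and hence $HP_0(A) = \widehat{HP_0(B)}_y$, would give the result immediately. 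Producing such a Poisson-respecting Artin-style algebraization preserving the finite-leaves condition is itself nontrivial, which is why I expect the direct local $D$-module route above to be the approach actually carried out.
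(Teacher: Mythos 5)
Your proposal follows the paper's own route: the paper likewise uses Proposition \ref{clo} to identify the continuous dual $HP_0(A)^*$ with $\mathcal{M}(X,x)$ and then invokes the formal analogue of Proposition \ref{invdi2} (stated there as ``proved similarly to the case of usual varieties'') to conclude finite-dimensionality. Your extra detail on lifting Hamiltonian vector fields to the formal power series ring and redoing the Lagrangian dimension count is exactly the content the paper compresses into that parenthetical, so this is essentially the same argument.
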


\begin{proof}
  Let $HP_0(A)^*$ be the space of continuous linear functionals on
  $HP_0(A)$ (in the topology of inverse limit). Then
  $HP_0(A)^*={\mathcal M}(X,x)$, where ${\mathcal M}(X,x)$ is defined
  in the same way as in the case when $X$ is a variety.  Thus, by the
  formal analogue of Proposition \ref{invdi2} (which is proved
  similarly to the case of usual varieties) we deduce that $HP_0(A)^*$
  is finite-dimensional.  Hence, so is $HP_0(A)$.
\end{proof}

\begin{corollary}\label{fintrace}
Let $A_0={\mathcal O}_X$ be a local $\mathbb{K}$-algebra 
which is separated and complete in the adic topology. 
Let $A$ be a flat formal deformation of $A_0$ over $\mathbb{K}[[\hbar]]$,
such that $A/\hbar A = A_0$. If $\Spec A_0$ has finitely many symplectic leaves with the Poisson structure induced from the commutator on $A$, then
$(A/[A,A])[\hbar^{-1}]$ is a finite-dimensional vector space over $\mathbb{K}((\hbar))$.
\end{corollary}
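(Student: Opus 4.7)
The plan is to deduce the corollary from Proposition~\ref{finit} via a Nakayama-type iterated lifting argument, modeled on the deformation version of Proposition~\ref{ssprop} sketched in Remark~\ref{ssproprem}.

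First, I would apply Proposition~\ref{finit} to the local algebra $A_0 = \OO_X$, whose Poisson structure is induced by the commutator on $A$ and whose spectrum has finitely many symplectic leaves by hypothesis. This yields $k := \dim_\mathbb{K} HP_0(A_0) < \infty$. I would then choose elements $\tilde b_1, \ldots, \tilde b_k \in A$ whose reductions modulo $\hbar$ form a $\mathbb{K}$-basis of $HP_0(A_0) = A_0/\{A_0, A_0\}$. The goal is to show that these $k$ elements span $(A/[A,A])[\hbar^{-1}]$ as a $\mathbb{K}((\hbar))$-vector space.

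Next, I would perform an iterated lifting. Given $a \in A$, write $a \bmod \hbar = \sum_i c_i^{(0)} \bar b_i + \sum_j \{u_j, v_j\}$ with $c_i^{(0)} \in \mathbb{K}$ and $u_j, v_j \in A_0$. Because $A_0$ is commutative, any lifts $\tilde u_j, \tilde v_j \in A$ satisfy $[\tilde u_j, \tilde v_j] \in \hbar A$, and $\hbar^{-1}[\tilde u_j, \tilde v_j] \equiv \{u_j, v_j\} \pmod{\hbar}$. Since each $\hbar^{-1}[\tilde u_j, \tilde v_j]$ is a commutator in $A[\hbar^{-1}]$, it vanishes in $(A/[A,A])[\hbar^{-1}]$, so $a \equiv \sum_i c_i^{(0)} \tilde b_i + \hbar a_1$ there, for some $a_1 \in A$. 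Iterating on $a_1, a_2, \ldots$ produces $c_i^{(n)} \in \mathbb{K}$ and $a_N \in A$ with, for every $N \geq 1$,
\[ a - \sum_i \Bigl(\sum_{n=0}^{N-1} \hbar^n c_i^{(n)}\Bigr) \tilde b_i = \hbar^N a_N \quad \text{in } (A/[A,A])[\hbar^{-1}]. \]

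Setting $c_i := \sum_{n \geq 0} \hbar^n c_i^{(n)} \in \mathbb{K}[[\hbar]]$, I would finally like to conclude $a = \sum_i c_i \tilde b_i$ in $(A/[A,A])[\hbar^{-1}]$, bounding $\dim_{\mathbb{K}((\hbar))}(A/[A,A])[\hbar^{-1}] \leq k$. The missing ingredient is a separatedness statement, namely $\bigcap_N \hbar^N (A/[A,A]) = 0$ inside $(A/[A,A])[\hbar^{-1}]$. The main obstacle is this last step, which I would address by proving that $[A,A]$ is $\hbar$-adically closed in $A$, in direct analogy with Proposition~\ref{clo}. For this, one endows $A$ with the combined $\mathfrak{m}_x$-and-$\hbar$-adic topology, under which each quotient $A/(\mathfrak{m}_x^a + \hbar^b A)$ is finite-dimensional and hence $A$ (and $A \widehat{\otimes} A$) is linearly compact; the continuous image $[A,A]$ of the commutator map $A \widehat{\otimes} A \to A$ is then automatically closed in that topology, and a further approximation argument bridges this to the closure, and hence the required separatedness, with respect to the pure $\hbar$-adic topology.
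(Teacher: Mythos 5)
Your reduction is sound up to the last step: the iterated lifting does show that $\hbar^d\bigl(a-\sum_i c_i\tilde b_i\bigr)$ lies in $\bigcap_N\bigl([A,A]+\hbar^N A\bigr)$, so everything indeed hinges on $[A,A]$ being closed in the $\hbar$-adic topology of $A$ (with $\hbar^{-d}$ in place of $\hbar^{-1}$ when the induced bracket has degree $-d$ as in Remark \ref{hbrem}; this is cosmetic). Note that this is a genuinely different route from the paper's, which never touches the quotient: there one produces a canonical inclusion $((A/[A,A])[\hbar^{-1}])^*\subseteq (HP_0(A_0)((\hbar)))^*$ by the symbol argument of Proposition \ref{ssprop} (cf.\ Remark \ref{ssproprem}) and invokes Proposition \ref{finit}; since $\dim V\le\dim V^*$, passing to duals converts the problematic quotient into a subspace and no separatedness or closedness statement is ever needed.

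The gap is in your proof of closedness. Linear compactness shows that the image of the \emph{continuous} map $A\widehat{\otimes}A\to A$ is closed, but that image is the set of sums $\sum_k[a_k,b_k]$ convergent in the $({\mathfrak m}_x,\hbar)$-adic topology, which contains the span $[A,A]$ of \emph{finite} sums of commutators but is a priori only its closure; so the argument shows that the closure of $[A,A]$ is closed, which is vacuous, and your spanning argument would then only bound $(A/\overline{[A,A]})[\hbar^{-1}]$ rather than $(A/[A,A])[\hbar^{-1}]$. This is exactly the trap that Proposition \ref{clo} avoids by first rewriting $\{A,A\}=\sum_{i=1}^n\{x_i,A\}$, the honest image of a single continuous map from the linearly compact space $A^n$; to salvage your route you would need the analogous identity $[A,A]=\sum_i[\tilde x_i,A]$ for finitely many topological generators $\tilde x_i$, and this is not automatic for the associative commutator: the Leibniz expansion of $[w,b]$ for a word $w$ produces two-sided multiples $u[\tilde x_i,b]v$, not commutators with the $\tilde x_i$, and pushing them back into $\sum_i[\tilde x_i,A]$ requires a further iteration using $[A,A]\subseteq\hbar^d A$ together with the closedness of that image. (By contrast, the ``bridging'' you worry about at the very end is in the harmless direction: every $({\mathfrak m}_x,\hbar)$-adic basic neighborhood contains some $\hbar^a A$, so the $\hbar$-adic topology is finer and a subspace closed in the combined topology is automatically $\hbar$-adically closed; the real issue is the finite-sum versus convergent-sum conflation above.) Unless you supply that missing identity, or argue as the paper does by dualizing, the proof is incomplete.
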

\begin{proof}
  There is a canonical inclusion $((A/[A,A]) [\hbar^{-1}])^* \subseteq
  (HP_0(A_0)((\hbar)))^*$, taking duals over $\KK((\hbar))$, which
  follows as in the proof of Proposition \ref{ssprop}; see also Remark
  \ref{ssproprem}.  By Proposition \ref{finit}, the latter is
  finite-dimensional.
\end{proof}

\subsection{Poisson modules for formal schemes}
The results of \S \ref{invdis}--\ref{fsche} generalize without
significant changes to the setting of Poisson modules. Since the
proofs are generalized in a straightforward manner, we will give the
statements only.  Our main goal is Corollary \ref{fintracepm}, which
is used in the appendix.

We keep the notation of \S 3.3.  Let $N$ be a coherent Poisson module
on $X \subseteq V$.  Define {\it a distribution on
  $N$ supported at $x\in X$} to be a linear functional $\xi: {\mathcal
  O}_{X,x} \otimes_{{\mathcal O}_X}N\to \Bbb K$ which vanishes on
${\mathfrak m}_x^n N$ for large enough $n$.

Let ${\mathcal M}_\phi(X,N,x)$ be the space of distributions on $N$
supported at $x$ which are invariant under Hamiltonian vector fields 
defined by functions $\phi^*f$, $f\in {\mathcal O}_Y$. 
In particular, if $X=Y$ and $\phi={\rm id}$, this space will be denoted 
by ${\mathcal M}(X,N,x)$.  

The following proposition is a direct generalization of Proposition
\ref{invdi1}.

\begin{proposition}\label{invdi1pm}
  There is a natural isomorphism
  ${\mathcal M}_\phi(X,N,x)\cong \Hom(M_\phi(X,N),\delta_x)$.
\end{proposition}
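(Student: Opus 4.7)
The plan is to imitate the proof of Proposition \ref{invdi1} with $N$ in place of $\OO_X$. First, I would fix a closed embedding $i: X \to V$ of $X$ into a smooth affine variety $V$ with $i(x) = 0$, and work throughout with $i_*$ so that Kashiwara's theorem lets me replace right $D_X$-modules with right $D_V$-modules supported on $X$. Under this equivalence, $i_* M_\phi(X,N)$ has the explicit presentation recalled after the definition: it is the quotient of $N \otimes_{\OO_V} D_V$ by the left action of (i) regular functions vanishing on $X$ and (ii) vector fields on $V$ that preserve $I_X$ and restrict on $X$ to Hamiltonian vector fields $\xi_{\phi^* f}$ for $f \in \OO_Y$. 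The object $i_* \delta_x$ is the familiar right $D_V$-module of distributions on $V$ supported at the origin.

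Next, I would produce the bijection in both directions. Given a morphism $\psi: i_* M_\phi(X,N) \to i_*\delta_x$ of right $D_V$-modules, evaluation on $N \otimes 1$ gives an $\OO_V$-linear map $\psi_0: N \to i_*\delta_x$. Because $i_*\delta_x$ is supported at $x$ and $N$ is quasicoherent on $X$, $\psi_0$ descends to a functional on $\OO_{X,x} \otimes_{\OO_X} N$ vanishing on $\mathfrak m_x^n N$ for large $n$, i.e., a distribution on $N$ supported at $x$ in the sense of the preceding subsection. Conversely, any such distribution uniquely determines a map $N \to i_*\delta_x$ of $\OO_V$-modules, which extends uniquely to a morphism of right $D_V$-modules out of $N \otimes_{\OO_V} D_V$ by right-$D_V$-linearity.

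The key step is to check that the distribution lands in $\mathcal{M}_\phi(X,N,x)$ precisely when the corresponding $\psi$ annihilates the defining left ideal. The conditions from (i) (functions vanishing on $X$) and from vanishing on $\mathfrak m_x$-powers reflect the support condition on $N$ and on $\delta_x$ respectively. The interesting condition is (ii): for a vector field $v$ on $V$ restricting to $\xi_{\phi^* f}$ on $X$, the diagonal action gives $v \cdot (n \otimes 1) = (v \cdot n) \otimes 1 + n \otimes v$, and applying $\psi$ yields $\psi_0(v \cdot n) + \psi_0(n) \cdot v$. Using that the right action of $v$ on the distribution $\psi_0(n) \in i_*\delta_x$ is by minus the Lie derivative, this vanishes for all $n$ exactly when the distribution is invariant under $\xi_{\phi^* f}$, which is the invariance condition defining $\mathcal{M}_\phi(X,N,x)$.

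The only mild obstacle is bookkeeping for the right-module/twist conventions (identifying $\delta_x$ with distributions involves an implicit trivialization of the canonical bundle at $x$, but this is independent of $N$ and already handled in the proof of Proposition \ref{invdi1}); once this is fixed, the naturality of the bijection in $N$ is immediate from the constructions.
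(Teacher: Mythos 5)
Your argument is correct and is essentially the proof the paper has in mind: Proposition \ref{invdi1pm} is stated as a direct generalization of Proposition \ref{invdi1}, which the paper says follows directly from the definitions, and your write-up is exactly that definitional unwinding (Kashiwara plus the adjunction $\Hom_{D_V}(N\otimes_{\OO_V}D_V,\delta_x)\cong\Hom_{\OO_V}(N,\delta_x)$, with the Leibniz rule matching the diagonal relations to the invariance condition). The only nitpick is that the factorization through $N/\mathfrak m_x^n N$ for a single $n$ uses coherence of $N$ (the standing assumption of this subsection), not just quasicoherence as you state.
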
 

Now let us assume that $X$ has finitely many symplectic leaves and
$\phi$ is finite, and give an explicit upper bound for the dimension
of ${\mathcal M}_\phi(X,N,x)$ (in particular, showing that it is
finite-dimensional). For $p \in V^*$, let $N_p\subset N$ be the
${\mathcal O}_X$-submodule defined by
$$
N_p= \sum_j (v_j,p)|_X N
$$
(so, if $N={\mathcal O}_X$ then $N_p$ is the image of $J_p$ in ${\mathcal O}_X$). 
Then $N_p$ has finite codimension in $N$ for generic $p$. Namely, this codimension is 
the generic rank on $V^*\subset X\times V^*$ 
of the module $N\otimes_{{\mathcal O}_X} {\mathcal O}(V\oplus V^*)/\langle (v_j,p)\rangle $, 
which is defined by the classical limits of the equations defining $i_*{\mathcal M}_\phi(X,N,x)$. 

We have the following direct generalizations of Propositions \ref{invdi2}
and \ref{invdi3}:
\begin{proposition}\label{invdi2pm} 
  If $X$ has finitely many symplectic leaves and $\phi$ is finite,
  then the dimension of ${\mathcal M}_\phi(X,N,x)$ is dominated by the
  codimension of the submodule $N_p$ for all $p \in V^*$.
\end{proposition}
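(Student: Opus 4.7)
The plan is to transcribe the proof of Proposition \ref{invdi2} to the Poisson module setting, with $N$ replacing $\OO_X$ throughout. First I would invoke Proposition \ref{invdi1pm} to identify $\mathcal{M}_\phi(X,N,x)$ with $\Hom(M_\phi(X,N),\delta_x)$. By Theorem \ref{holo} the right $D$-module $M_\phi(X,N)$ is holonomic, since $X$ has finitely many symplectic leaves, $\phi$ is finite, and $N$ is coherent. A standard fact about holonomic $D$-modules then bounds $\dim \Hom(i_*M_\phi(X,N),\delta_x)$ by the multiplicity, in the characteristic variety of $i_*M_\phi(X,N)$, of the conormal fiber $\{x\}\times V^*\subset T^*V$.

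The next step is to identify this multiplicity with $\operatorname{codim}(N_p)$ for generic $p$. Using the explicit description of $i_*M_\phi(X,N)$ (cf.\ the remark after its definition) as the quotient of $N\otimes_{\OO_V} D_V$ by the left action of (lifts of) the Hamiltonian vector fields $v_j$ and by functions on $V$ vanishing on $X$, a symbol calculation shows that the associated graded module with respect to the order filtration on $D_V$ is the quotient of $N\otimes_{\OO_V}\OO_{T^*V}$ by the classical symbols $(v_j,p)$ (the ideal $I_X$ already annihilates $N$). Localizing at a generic point $(x,p)$ of the fiber $\{x\}\times V^*$, the multiplicity is the length of the localization of $N/\sum_j (v_j,p)|_X N = N/N_p$ at $x$, which for generic $p$ is finite and equals $\operatorname{codim}(N_p)$; this is precisely the genericity assertion made in the paragraph preceding the proposition.

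Combining the two steps yields $\dim\mathcal{M}_\phi(X,N,x)\le \operatorname{codim}(N_p)$ for generic $p$. Since the generic codimension is the minimum of $\operatorname{codim}(N_p)$ over all $p\in V^*$, the inequality then holds for all $p\in V^*$, establishing the proposition.

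The only nontrivial point is the second step: verifying that the associated graded of the presentation of $i_*M_\phi(X,N)$ localizes, at a generic point of $\{x\}\times V^*$, to $N/N_p$, so that the local multiplicity equals $\operatorname{codim}(N_p)$. The rest is a line-by-line parallel of the proof of Proposition \ref{invdi2}, and no genuinely new Poisson-theoretic ingredient is required beyond the holonomicity statement of Theorem \ref{holo} for $M_\phi(X,N)$.
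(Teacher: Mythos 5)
Your proposal is correct and follows essentially the same route as the paper: the paper proves Proposition \ref{invdi2pm} by declaring it a straightforward generalization of Proposition \ref{invdi2}, whose proof is exactly your argument (Proposition \ref{invdi1pm} plus the bound of $\dim\Hom(M_\phi(X,N),\delta_x)$ by the multiplicity of $\{x\}\times V^*$ in the singular support, identified with $\operatorname{codim}(N_p)$ for generic $p$, hence the minimal codimension). Your second step, computing the symbol module $N\otimes_{\OO_X}\OO(V\oplus V^*)/\langle (v_j,p)\rangle$, is precisely the identification the paper makes in the paragraph preceding the proposition.
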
 

\begin{proposition}\label{invdi3pm} 
  Suppose that $X$ and $Y$ are affine, and that ${\mathcal O}_X$, ${\mathcal O}_Y$, and $N$
  are nonnegatively graded with finite-dimensional graded components.
  Assume also that the Poisson bracket on $X$ and the action of ${\mathcal O_X}$ on $N$ 
  are homogeneous, and that
  the map $\phi^*$ preserves degree.  Then, under the assumptions of
  Proposition \ref{invdi2pm},
  ${\mathcal M}_\phi(X,N,x)=(N/\lbrace{{\mathcal O}_Y,N\rbrace})^*$.
  Hence, $\dim(N/\lbrace{{\mathcal O}_Y,N\rbrace})\le {\rm
    codim}(N_p)$.
\end{proposition}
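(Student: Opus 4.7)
The strategy is to adapt the proof of Proposition \ref{invdi3} essentially verbatim, replacing $\OO_X$ by $N$ throughout. The second statement $\dim(N/\{\OO_Y,N\})\le\operatorname{codim}(N_p)$ is a formal consequence of the first statement combined with Proposition \ref{invdi2pm}, so the real content is the identification $\mathcal{M}_\phi(X,N,x)=(N/\{\OO_Y,N\})^*$.

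The key input is that $N/\{\OO_Y,N\}$ is finite-dimensional. This follows from the Poisson-module form of Theorem \ref{th1} (equivalently, from Theorem \ref{holo} together with Proposition \ref{hp0modprop}), using the assumptions that $X$ has finitely many symplectic leaves and $\phi$ is finite. Since $N$ and $\{\OO_Y,N\}$ are both nonnegatively graded (the bracket and the $\OO_X$-action on $N$ being homogeneous, and $\phi^*$ preserving degree), the quotient $N/\{\OO_Y,N\}$ is also nonnegatively graded, and being finite-dimensional it is concentrated in degrees $0,1,\dots,D$ for some $D$.

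To construct the identification, I would first note the tautological inclusion $\mathcal{M}_\phi(X,N,x)\hookrightarrow(N/\{\OO_Y,N\})^*$: restriction of a Hamiltonian-invariant distribution along $N\to\OO_{X,x}\otimes_{\OO_X}N$ gives a functional on $N$ annihilating $\{\phi^*\OO_Y,N\}$, hence descending to $N/\{\OO_Y,N\}$. For the reverse inclusion, given $\xi\in(N/\{\OO_Y,N\})^*$ I would choose the graded lift $\tilde\xi:N\to\K$ which vanishes on $N_n$ for $n>D$. Since $x$ is the origin of the grading, $\mathfrak{m}_x$ consists of elements of strictly positive degree, so $\mathfrak{m}_x^{D+1}N\subseteq N_{\ge D+1}$; therefore $\tilde\xi$ vanishes on $\mathfrak{m}_x^{D+1}N$ and extends uniquely to a functional on $\OO_{X,x}\otimes_{\OO_X}N$ killing $\mathfrak{m}_x^{D+1}$ times this module, i.e., to a distribution supported at $x$. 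Hamiltonian invariance with respect to $\phi^*\OO_Y$ is automatic from the vanishing of $\tilde\xi$ on $\{\OO_Y,N\}$.

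I do not anticipate a substantive obstacle; the one point worth checking is that the graded lift truly extends across the completion at $x$, which is where the hypothesis of finite-dimensional graded components of $N$ enters (beyond just finiteness of $N/\{\OO_Y,N\}$). Once the identification of the two spaces is in hand, Proposition \ref{invdi2pm} immediately yields the stated bound $\dim(N/\{\OO_Y,N\})\le\operatorname{codim}(N_p)$.
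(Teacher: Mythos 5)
Your proposal is correct and follows essentially the same route as the paper: the paper proves Proposition \ref{invdi3pm} by the same argument as Proposition \ref{invdi3}, namely that $N/\{\OO_Y,N\}$ is nonnegatively graded and finite-dimensional (by the Poisson-module version of Theorem \ref{th1}), so every functional on it lifts to a distribution supported at the cone point $x$, with the bound then coming from Proposition \ref{invdi2pm}. Your fleshing out of the graded lift and its extension over $\OO_{X,x}\otimes_{\OO_X}N$ is exactly the ``straightforward generalization'' the paper leaves implicit.
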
 

Now let us consider the setting of formal schemes with one closed
point, as in Subsection 3.5.  We keep the notation of Subsection
3.5. Let $N$ be a coherent Poisson $A$-module, and let
$N_n:=N/{\mathfrak m}_x^nN$.  Then we have surjective maps
$HP_0(A_{n+1},N_{n+1})\to HP_0(A_n,N_n)$.

Proposition \ref{clo} generalizes to
\begin{proposition}\label{clopm} 
  \begin{enumerate}
  \item[(i)] The subspace $\lbrace{A,N\rbrace}$ is closed in $N$.
  \item[(ii)] $HP_0(A,N)=\underleftarrow{\lim}\, HP_0(A_n,N_n)$.
  \end{enumerate}
\end{proposition}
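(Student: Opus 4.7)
The plan is to mimic the proof of Proposition \ref{clo}, replacing $A$ with $N$ where appropriate and reducing to the same linear compactness lemma. A key preliminary observation is that $N$ is linearly compact in the $\mathfrak m_x$-adic topology: since $A$ is a complete Noetherian local ring and $N$ is coherent (hence finitely generated), each quotient $N/\mathfrak m_x^n N$ is finitely generated over the finite-dimensional $\KK$-algebra $A/\mathfrak m_x^n$ and is therefore finite-dimensional; by completeness $N=\underleftarrow{\lim}_n N/\mathfrak m_x^n N$. A routine induction using Leibniz also shows that $\mathfrak m_x^n\subseteq A$ is a Poisson ideal and $\mathfrak m_x^n N\subseteq N$ is preserved by the Poisson action, so each $N_n$ is a Poisson $A_n$-module, and $HP_0(A_n,N_n)=N/(\{A,N\}+\mathfrak m_x^n N)$.

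For part (i), let $x_1,\dots,x_r\in\mathfrak m_x$ be topological generators and consider the continuous linear map $L\colon N^r\to N$, $L(n_1,\dots,n_r)=\sum_{i=1}^r\{x_i,n_i\}$. Source and target are linearly compact, so by the lemma from the proof of Proposition \ref{clo} the image $\sum_i\{x_i,N\}$ is closed in $N$. The inclusion $\sum_i\{x_i,N\}\subseteq\{A,N\}$ is trivial. For the reverse, the Poisson module axioms make $\{-,n\}\colon A\to N$ a continuous derivation for each fixed $n$, hence determined by its values on the topological generators $x_i$. This expresses every $\{f,n\}$ as a finite sum of terms $a\{x_i,n\}$ with $a\in A$. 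Each such product is then rewritten by Leibniz as $\{x_i,an\}-\{x_i,a\}n$, and the residual $\{x_i,a\}n$ is absorbed into $\sum_j\{x_j,N\}$ using the algebra identity $\{A,A\}=\sum_j\{x_j,A\}$ from Proposition \ref{clo} together with the Leibniz relation $\{x_j,b\}m=\{x_j,bm\}-b\{x_j,m\}$, in exact parallel with the argument of Proposition \ref{clo}. We conclude $\{A,N\}=\sum_i\{x_i,N\}$, hence $\{A,N\}$ is closed.

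Part (ii) then follows at once: $N/\{A,N\}$ is the quotient of the linearly compact space $N$ by a closed subspace, so it is complete in the $\mathfrak m_x$-adic quotient topology, giving $N/\{A,N\}=\underleftarrow{\lim}_n N/(\{A,N\}+\mathfrak m_x^n N)=\underleftarrow{\lim}_n HP_0(A_n,N_n)$. The main obstacle is the module analogue $\{A,N\}=\sum_i\{x_i,N\}$ in part (i): the rewriting strategy from Proposition \ref{clo} generalizes, but one has to keep careful track of the interaction between the $A$-module action on $N$ and the Poisson bracket, relying on coherence of $N$ and topological finite generation of $\mathfrak m_x$ to ensure the reductions terminate in the $\mathfrak m_x$-adic topology.
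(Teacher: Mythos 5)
Your overall architecture is the right one (and is what the paper intends when it says the proof of Proposition \ref{clo} generalizes): reduce (i) to the identity $\{A,N\}=\sum_{i}\{x_i,N\}$, apply the linear--compactness lemma to $L\colon N^r\to N$, and deduce (ii) from (i). The gap is in your justification of that identity. After the chain-rule step you must show $a\{x_i,n\}\in S_N:=\sum_j\{x_j,N\}$; you write $a\{x_i,n\}=\{x_i,an\}-\{x_i,a\}n$ and propose to absorb $\{x_i,a\}n$ by expanding $\{x_i,a\}=\sum_j\{x_j,b_j\}$ (Proposition \ref{clo}) and applying Leibniz once more. But that produces the residuals $-\sum_j b_j\{x_j,n\}$, which are exactly of the form you started with, and Proposition \ref{clo} gives no control whatsoever on the $\mathfrak{m}_x$-adic order of the $b_j$; so there is no gain at any stage, and two successive rewritings essentially return the original expression. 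What you are trying to prove at this point is precisely that $S_N$ is stable under multiplication by $A$ (equivalently $\{A,A\}\cdot N\subseteq S_N$), and the Leibniz shuffles only trade this statement for an equivalent one. The closing appeal to coherence of $N$ and finite generation of $\mathfrak{m}_x$ ``to ensure the reductions terminate'' is not an argument, because no step decreases anything.

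The step can be repaired, and the repair is the straightforward generalization of Proposition \ref{clo}. Adding the two Leibniz rules and using antisymmetry of the bracket on $A$ gives $\{fg,m\}=\{f,gm\}+\{g,fm\}$ for any Poisson module; by induction on the length of a monomial in the topological generators this shows $\{F(x_1,\dots,x_r),n\}\in S_N$ exactly, for every polynomial $F$, without ever creating coefficient-multiplied residuals. For general $f\in A$ write $f=F_k(x)+r_k$ with $F_k$ polynomial and $r_k\in\mathfrak{m}_x^k$; the second Leibniz rule gives $\{\mathfrak{m}_x^k,N\}\subseteq\mathfrak{m}_x^{k-1}N$, hence $\{f,n\}\in S_N+\mathfrak{m}_x^{k-1}N$ for all $k$, and since $S_N$ is closed (your map $L\colon N^r\to N$ and the lemma), $\{f,n\}\in\bigcap_k\bigl(S_N+\mathfrak{m}_x^kN\bigr)=S_N$. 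With this, your deductions of (i) and then (ii) go through as written. A minor further caveat: the Lie action of $\mathfrak{m}_x^n$ on $N$ need not land in $\mathfrak{m}_x^nN$, so $N_n$ is not literally an $A_n$-Poisson module in general; but only the coinvariant spaces $N/(\{A,N\}+\mathfrak{m}_x^nN)$ are used, so this does not affect the argument.
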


This allows us to obtain the following generalization of Proposition
\ref{finit} and Corollary \ref{fintrace}.

\begin{proposition}\label{finitpm}
  Assume that $A$, $X$, $N$ are as above, and $X$ has finitely many
  symplectic leaves.  Then, the space $HP_0(A,N)$ is finite-dimensional.
\end{proposition}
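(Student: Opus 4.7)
The plan is to follow the proof of Proposition \ref{finit} in the module setting, using the formal analogues Propositions \ref{clopm}, \ref{invdi1pm}, and \ref{invdi2pm} in place of their variety counterparts. By Proposition \ref{clopm}(ii), one has
\[
HP_0(A, N) = \underleftarrow{\lim}\, HP_0(A_n, N_n).
\]
Since $A$ is topologically finitely generated and $N$ is coherent, each $A_n = A/\mathfrak{m}_x^n$ and $N_n = N/\mathfrak{m}_x^n N$ is finite-dimensional (by Nakayama), so $HP_0(A, N)$ carries a natural linearly compact topology as an inverse limit of finite-dimensional spaces.

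Next, I would identify the continuous dual $HP_0(A, N)^*$ (with respect to the inverse-limit topology) with $\mathcal{M}(X, N, x)$: a continuous functional on $N / \overline{\{A, N\}}$ is the same datum as a linear functional on $N$ that annihilates $\{A, N\}$ and vanishes on $\mathfrak{m}_x^n N$ for some $n$, which is by definition an element of $\mathcal{M}(X, N, x)$ (cf.\ Proposition \ref{invdi1pm} with $\phi = \id$). I would then invoke the formal analogue of Proposition \ref{invdi2pm}, which bounds $\dim \mathcal{M}(X, N, x)$ by the codimension of $N_p$ for a generic $p \in V^*$; this codimension is finite precisely because $X$ has finitely many symplectic leaves, exactly as in the variety case. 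Finiteness of $HP_0(A, N)^*$ then forces $HP_0(A, N)$ itself to be finite-dimensional, since an inverse limit of finite-dimensional vector spaces with finite-dimensional continuous dual must already stabilize.

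The main obstacle is justifying the formal analogue of Proposition \ref{invdi2pm}. Concretely, one chooses a topologically surjective map $\KK[[y_1, \ldots, y_d]] \twoheadrightarrow A$, views $X$ as a closed formal subscheme of $V = \Spec \KK[[y_1, \ldots, y_d]]$, and forms the right $D$-module $i_* M(X, N)$ on $V$. The dimension-counting in the proof of Theorem \ref{holo} — comparing $\dim X^r_j$ to $r$ — is purely local and geometric, so it transfers verbatim to show the singular support is Lagrangian; what requires care is the general principle that, for a holonomic $D$-module $M$ with a closed-point support component, $\dim \Hom(M, \delta_x)$ is dominated by the multiplicity of the zero section in the singular support. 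This is a standard local statement (and holds verbatim in the analytic setting by \cite{Bj}; cf.\ Remark \ref{invdi2.5}), so I expect the formal adaptation to require only extracting this local fact, rather than developing a full $D$-module theory on formal schemes.
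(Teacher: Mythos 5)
Your proposal is correct and follows essentially the same route as the paper: pass to the continuous dual of $HP_0(A,N)$ via Proposition \ref{clopm}, identify it with ${\mathcal M}(X,N,x)$, and bound that space by the formal analogue of Proposition \ref{invdi2pm}, whose proof is local (cf.\ Remark \ref{invdi2.5}) and carries over from the variety case exactly as you indicate.
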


\begin{proof}
  Let $HP_0(A,N)^*$ be the space of continuous linear functionals on
  $HP_0(A,N)$ (in the topology of inverse limit, defined thanks to Proposition \ref{clopm}). Then
  $HP_0(A,N)^*={\mathcal M}(X,N,x)$, where ${\mathcal M}(X,N,x)$ is defined
  just as in the case $X$ is a variety.  By the
  formal analogue of Proposition \ref{invdi2pm},  $HP_0(A,N)^*$
  is finite-dimensional.  Hence, so is $HP_0(A,N)$.
\end{proof}

\begin{corollary}\label{fintracepm}
Let $A_0={\mathcal O}_X$ be a local $\Bbb K$-algebra 
which is separated and complete in the adic topology. 
Let $B_0$ be an algebra over $A_0$ which is finitely generated 
as an $A_0$-module, and whose center is $A_0$. Let $B$ 
be a flat formal defomation of $B_0$ over $\Bbb K[[\hbar]]$, such that   
$B/\hbar B=B_0$, and assume that the Poisson bracket on 
$X$ induced by this deformation corresponding to some $d>0$ (as in Remark \ref{hbrem}) has finitely many symplectic leaves. 
Then $(B/[B,B])[\hbar^{-1}]$ is a finite-dimensional vector space over $\Bbb K((\hbar))$. 
\end{corollary}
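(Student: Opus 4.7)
The plan is to adapt the proof of Corollary \ref{fintrace} to the non-commutative base $B_0$, using the coherent Poisson module $N:=B_0/[B_0,B_0]$ over the center $A_0$ in place of the Poisson algebra $A_0$ (viewed as a module over itself). Specifically, I would construct a canonical $\KK((\hbar))$-linear inclusion
$$
\bigl((B/[B,B])[\hbar^{-1}]\bigr)^{*}\;\hookrightarrow\;\bigl(HP_0(A_0,N)((\hbar))\bigr)^{*},
$$
and then invoke Proposition \ref{finitpm} to conclude that the target is finite-dimensional over $\KK((\hbar))$, forcing $(B/[B,B])[\hbar^{-1}]$ itself to be finite-dimensional.

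The first step is to verify that $N$ really is a coherent Poisson module over $A_0$. Since $B_0$ is finitely generated as an $A_0$-module, so is its quotient $N$, giving coherence. The last item of Example \ref{pmexams}, in the $\hbar$-deformation formulation of Remark \ref{hbrem}, equips $N$ with a Poisson module structure: choosing a lift $g:A_0\to B$ of $A_0\hookrightarrow B_0$ that is central modulo $\hbar^d$, one sets
$$
\{z,\bar b\}\;:=\;\hbar^{-d}[g(z),\tilde b]\pmod{\hbar},
$$
where $\bar b\in N$ and $\tilde b\in B$ lifts $\bar b$. A Jacobi identity computation in $B$ shows this action descends to $N$ and is independent of the choice of $g$.

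Next I would construct the inclusion. A continuous $\KK((\hbar))$-linear functional $\xi$ on $(B/[B,B])[\hbar^{-1}]$ may be viewed, via the identification $B[\hbar^{-1}]\cong B_0((\hbar))$, as a functional on $B_0((\hbar))$ that annihilates every $\hbar^{-d}[g(z),\tilde b]$ with $z\in A_0$ and $\tilde b\in B$, because $[g(z),\tilde b]\in[B,B]$ already forces $\xi([g(z),\tilde b])=0$. Reducing modulo $\hbar$ identifies this vanishing with vanishing on $\{A_0,N\}((\hbar))$, so $\xi$ descends to a continuous functional on $HP_0(A_0,N)((\hbar))$. This is the Poisson-module analog of the inclusion used in the proof of Corollary \ref{fintrace}, and is an instance of the deformation-quantization extension of Proposition \ref{ssprop} flagged in Remark \ref{ssproprem}.

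Finally I would apply Proposition \ref{finitpm} with $A=A_0$ and $N=B_0/[B_0,B_0]$: completeness and locality of $A_0$ are part of the setup, coherence of $N$ has just been verified, and $\Spec A_0$ has finitely many symplectic leaves by hypothesis. This yields $\dim_{\KK}HP_0(A_0,N)<\infty$, so the right-hand side of the inclusion is finite-dimensional over $\KK((\hbar))$, whence so is its subspace on the left; hence $(B/[B,B])[\hbar^{-1}]$ is finite-dimensional as well. The main technical obstacle is the third step: making the inclusion rigorous in the $\hbar$-adic topological framework. This requires a Poisson-module analog of Proposition \ref{clopm} ensuring that $[B,B]$ is closed in $B$, so that $B/[B,B]$ is $\hbar$-adically separated and its continuous $\hbar$-inverted dual is well-behaved; once this topological point is in place, the rest reduces to a direct application of Proposition \ref{finitpm}.
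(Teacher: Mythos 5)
Your proposal is correct and follows essentially the same route as the paper: the paper's proof likewise takes $N=B_0/[B_0,B_0]$ as a coherent Poisson module over $A_0$, dominates $\dim_{\KK((\hbar))}(B/[B,B])[\hbar^{-1}]$ by $\dim HP_0(A_0,N)$ via the proof of Proposition \ref{ssprop} (cf.\ Remark \ref{ssproprem}), exactly as your dual inclusion does, and then concludes by Proposition \ref{finitpm}. The topological point you flag at the end is the same one already handled by Proposition \ref{clopm} in the paper's framework, so no new lemma is needed.
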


\begin{proof}
  By the proof of Proposition \ref{ssprop} (cf.~Remark \ref{ssproprem}),
  $\dim_{\Bbb K((\hbar))}(B/[B,B])[\hbar^{-1}]$ is dominated by $\dim
  HP_0(A_0,B_0/[B_0,B_0])$. Since $B_0/[B_0,B_0]$ is a coherent Poisson
  module over $A_0$,  Proposition \ref{finitpm} implies that
  the space $HP_0(A_0,B_0/[B_0,B_0])$ is finite-dimensional.
\end{proof} 

\section{The structure of $M(X)$ when $X$ has finitely many symplectic leaves}

\subsection{The general structure of $M(X)$}

For a variety $X$, let $IC(X)$ denote the intersection cohomology
(right) $D$-module on $X$, which is the intermediate extension
$j_{!*}(\Omega_U)$, for any open embedding $j: U \hookrightarrow X$ of
a smooth dense subvariety $U$.

Let $X$ be a Poisson variety.

\begin{proposition}\label{locsys}
  If $X$ has finitely many symplectic leaves, then $M(X)$ has a finite
  Jordan-H\"older series, in which all composition factors are
  intermediate extensions of irreducible local systems (i.e.,
  $\OO$-coherent right $D$-modules) on symplectic leaves of
  $X$. Moreover, the only such local system supported on each open
  symplectic leaf is the trivial local system $\Omega$, with
  multiplicity one. Hence, the Jordan-H\"older series of $M(X)$
  contains $IC(X)$ with multiplicity one.
\end{proposition}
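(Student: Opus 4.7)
The plan is to combine the holonomicity result of Theorem \ref{holo} with the singular support estimate from its proof and with the identification $M\cong\Omega$ for symplectic varieties from the Example in \S 2. Applying Theorem \ref{holo} with $\phi=\id_X$ (which is trivially finite), the $D$-module $M(X)$ is holonomic; it therefore admits a finite Jordan--H\"older series whose simple constituents are simple holonomic $D$-modules, and each such simple module is the intermediate extension $j_{!*}(L)$ of an irreducible $\OO$-coherent $D$-module (i.e., local system) $L$ on some smooth locally closed subvariety.

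Next, I identify those subvarieties with (open parts of closures of) symplectic leaves by using the singular support bound from the proof of Theorem \ref{holo}. Embedding $i\colon X\hookrightarrow V$ into a smooth affine $V$, one has $SS(i_*M(X))\subseteq Z=\{(x,p)\in T^*V : b(p|_{T_xX},df(x))=0 \text{ for all } f\in\OO_X\}$. As $f$ ranges over $\OO_X$, $df(x)$ sweeps out $T^*_xX$, so this condition forces $p|_{T_xX}$ to lie in the kernel of the bivector map $b_x\colon T^*_xX\to T_xX$; equivalently, $p$ annihilates the image $T_xX_j$, which is the tangent space to the symplectic leaf $X_j$ through $x$. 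Hence $SS(i_*M(X))$ lies in the finite union of closed conormal varieties $\overline{T^*_{X_j}V}$ to the symplectic leaves. By the standard classification of simple holonomic $D$-modules with singular support inside such a finite union of conormal closures to smooth locally closed subvarieties, every composition factor has support $\overline{X_j}$ for some $j$ and is the intermediate extension $IC(\overline{X_j},L)$ of an irreducible local system $L$ on $X_j$, giving the first assertion.

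For the multiplicity-one claim, fix a symplectic leaf $X_0$ that is open in $X$ (one exists in each irreducible component). By Lemma \ref{locpm}, $M(X)|_{X_0}=M(X_0)$, and since $X_0$ is smooth symplectic, the Example in \S 2 identifies $M(X_0)\cong\Omega_{X_0}$ canonically. The open restriction functor is exact and takes the Jordan--H\"older filtration of $M(X)$ to one for $\Omega_{X_0}$, retaining precisely those composition factors whose support meets $X_0$. Since the leaves are disjoint and $X_j$ is dense in $\overline{X_j}$, $\overline{X_j}\cap X_0\neq\emptyset$ forces $X_j=X_0$. Thus the surviving factors are $IC(\overline{X_0},L)$ for irreducible local systems $L$ on $X_0$, whose restrictions to $X_0$ assemble into $\Omega_{X_0}$; since $\Omega_{X_0}$ is itself simple, $L=\Omega_{X_0}$ is the unique choice and appears with multiplicity one. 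Its intermediate extension is $IC(\overline{X_0})=IC(X)$ (summing over irreducible components of $X$ if necessary), yielding the final claim.

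The main obstacle is the second paragraph, specifically extracting the structure of composition factors from the singular support bound. This uses the standard principle that a simple holonomic $D$-module with singular support in a finite union of closed conormals to smooth locally closed subvarieties is the intermediate extension of a local system on one of the strata, which applies directly here thanks to the smoothness of symplectic leaves and the finiteness assumption. The remaining steps amount to bookkeeping with Theorem \ref{holo}, Lemma \ref{locpm}, and the symplectic Example.
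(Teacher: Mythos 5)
Your proposal is correct and follows essentially the same route as the paper: holonomicity plus the singular support bound from the proof of Theorem \ref{holo} (which, for $\phi=\id$, places the singular support in the union of conormals to the symplectic leaves), the standard fact that composition factors of such a holonomic module are intermediate extensions of irreducible local systems on the strata, and restriction to an open leaf via Lemma \ref{locpm} together with the identification $M(U)\cong\Omega_U$ for symplectic $U$ to get multiplicity one. The extra detail you supply (why the covectors annihilate the leaf tangent spaces, exactness of open restriction) is just an expansion of the paper's terser argument.
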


\begin{proof} 
  Let $i: X\to V$ be a closed embedding of $X$ into a smooth variety.
It follows from the proof of Theorem \ref{holo} that the singular support of
  $M_{\id}(X,i)$ is the union of the conormal bundles in $T^*V$ of the
  symplectic leaves $X_j$ of $X$. Now recall that if $W$ is any smooth algebraic variety with a finite 
stratification, and $M$ is a holonomic $D$-module on $W$ whose singular support is contained 
in the union of the conormal bundles of the strata, then all members of the composition series of $M$ 
are irreducible extensions of irreducible local systems from the strata (see \cite{Kash}). 
This implies the first statement of the Proposition.  

The second statement follows from Lemma
  \ref{locpm}. Indeed, if $U\subset X$ is 
an open symplectic leaf, then by Lemma 
\ref{locpm} $M(X)|_U=M(U)=\Omega_U$. 
\end{proof}

\begin{remark} 
  The proposition extends to the case of $M_\phi(X)$, where
  $\phi: X \rightarrow Y$ is finite and $X$ has finitely many
  symplectic leaves, provided we replace the symplectic leaves in the
  statement of the proposition by the loci in the symplectic leaves of
  constant rank of $d \phi$, which we denoted by $X_j^r$ in the proof
  of Theorem \ref{holo}.
\end{remark}

\subsection{An example of a computation of $M(X)$}

Let $X$ be a Kleinian surface of type ADE (with its standard Poisson
structure), and $j: X\setminus 0\to X$ be the corresponding open
embedding.  In this case, the composition factors of $M(X)$ are the
intermediate extension $j_{!*}(\Omega)$ of $\Omega$, i.e., the
intersection cohomology $D$-module, $IC(X)$, which occurs with
multiplicity 1, and the $\delta$-function $D$-module, $\delta$, at the
origin.

\begin{lemma}\label{semis}
  Let $X$ be an irreducible affine variety of dimension $d$ with a
  ${\mathbb K}^\times$-action having a unique fixed point $0$, which is
  attracting (i.e., $X$ is a cone). Let $U=X\setminus 0$.  Assume that $U$ is
  smooth. Let $j: U\to X$ be the corresponding open embedding.  Then,
  for $m>0$,
  \[ 
  \Ext^m(j_{!*}(\Omega),\delta)=  \Ext^m(\delta,j_{!*}(\Omega))=H^{d-m}(U,{\mathbb K})^*. 
  \]
\end{lemma}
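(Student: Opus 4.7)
The plan is to compute both Ext groups by combining the open-closed distinguished triangle for the pair $(i,j)$ with the contracting $\mathbb{K}^\times$-action to reduce to the cohomology of the link. I focus on $\Ext^m(\delta,j_{!*}(\Omega))$; the case of $\Ext^m(j_{!*}(\Omega),\delta)$ will then follow by Verdier duality (using that $\delta$ is self-dual and $\mathbb{D}(j_{!*}(\Omega))=j_{!*}(\Omega)[d]$) or, equivalently, by the parallel adjunction argument. Writing $\delta=i_*\mathbb{K}$ for $i:\{0\}\hookrightarrow X$, the $(i_*,i^!)$-adjunction yields $R\Hom_{D_X}(\delta,j_{!*}(\Omega))\cong i^!j_{!*}(\Omega)$, viewed as a complex of vector spaces. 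Applying the open-closed triangle $i_*i^!\to \id\to j_*j^*$ to $j_{!*}(\Omega)$ and pushing forward along $p:X\to\mathrm{pt}$ (using $p\circ i=\id$ and $j^*j_{!*}(\Omega)=\Omega_U$) produces the distinguished triangle
\[ i^!j_{!*}(\Omega)\to p_*(j_{!*}(\Omega))\to p_{U,*}(\Omega_U)\xrightarrow{+} \]
of complexes of vector spaces.

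The heart of the argument is the identification of the two right-hand terms. The de Rham cohomology $p_{U,*}(\Omega_U)$ coincides (up to the usual shift by $d$ for right $D$-modules) with $H^\bullet(U,\mathbb{K})$; since $U$ deformation retracts onto the smooth link $L$ of $0$ in $X$, this equals $H^\bullet(L,\mathbb{K})$. For $p_*(j_{!*}(\Omega))$, the $D$-module $j_{!*}(\Omega)$ inherits a canonical $\mathbb{K}^\times$-equivariance from $\Omega_U$; combined with the contracting action this gives an isomorphism $p_*(j_{!*}(\Omega))\cong i^*j_{!*}(\Omega)$, the ``stalk at the attractor.'' Via the Riemann--Hilbert correspondence this is the stalk of the IC-sheaf of $X$ at the cone point, which by the support characterization of the intermediate extension equals the truncation $\tau_{<d}H^\bullet(L,\mathbb{K})$. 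The canonical map $p_*(j_{!*}(\Omega))\to p_{U,*}(\Omega_U)$ is then the inclusion of this truncation into the full cohomology of $L$, whose cofiber is $\tau_{\ge d}H^\bullet(L,\mathbb{K})$. Reading off the long exact sequence gives $H^m(i^!j_{!*}(\Omega))=H^{m+d-1}(L,\mathbb{K})$ for $1\le m\le d$ and zero otherwise. Applying Poincar\'e duality on the smooth compact orientable real $(2d-1)$-manifold $L$ rewrites this as $H^{d-m}(L,\mathbb{K})^*=H^{d-m}(U,\mathbb{K})^*$, as claimed (both sides vanishing for $m>d$).

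The main technical obstacle is to make precise the identification $p_*(j_{!*}(\Omega))\cong i^*j_{!*}(\Omega)$ via the contracting $\mathbb{K}^\times$-action and to keep straight the perverse versus right $D$-module shift conventions. Once these are fixed, the remainder of the argument---the long exact sequence together with Poincar\'e duality on the link---is routine.
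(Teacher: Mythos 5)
Your argument is correct in substance, but it follows a partly different route from the paper's, so a comparison is in order. The paper also begins with the adjunction $\Ext^m(\delta,j_{!*}(\Omega))=\Ext^m(\mathbb{K},i^!j_{!*}(\Omega))$, but instead of computing the costalk $i^!j_{!*}(\Omega)$ it immediately applies Verdier duality (self-duality of $\delta$ and of $j_{!*}(\Omega)$, together with the exchange of $i^!$ and $i^*$) to rewrite the answer as $H^{-m}(i^*j_{!*}(\Omega))^*$; the dual in the final formula thus arises from linear-algebra duality of $\Ext$ into $\mathbb{K}$, not from Poincar\'e duality on the link. The stalk $i^*j_{!*}(\Omega)$ is then computed exactly as you do, as the truncation $\tau^{<0}(i^*j_*(\Omega))$, and the only use of the $\mathbb{K}^\times$-action is the elementary fact that the stalk at the vertex of $j_*(\Omega)$ is $H^{\bullet}(U,\mathbb{K})$, because a punctured conical neighborhood of $0$ is homotopy equivalent to $U$. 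Your route instead feeds the recollement triangle $i_*i^!\to\id\to j_*j^*$ into $p_*$ and therefore needs the stronger contraction statement $p_*(j_{!*}(\Omega))\cong i^*j_{!*}(\Omega)$ (hyperbolic localization/contraction for monodromic complexes applied to the IC module itself), plus Poincar\'e duality on the compact oriented link $L$, to convert $H^{m+d-1}(L,\mathbb{K})$ into $H^{d-m}(U,\mathbb{K})^*$. Both inputs are standard, so your proof goes through, but the paper's early dualization avoids them; note also that your ``parallel adjunction'' computation of $\Ext^m(j_{!*}(\Omega),\delta)=H^{-m}(i^*j_{!*}(\Omega))^*$ is essentially the paper's computation of the other $\Ext$ group after dualizing, and is the shortest path of all.

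One correction: the assertion $\mathbb{D}(j_{!*}(\Omega))=j_{!*}(\Omega)[d]$ is inconsistent with the normalization in which $\delta$ is self-dual. With holonomic duality for right $D$-modules (equivalently, the perverse normalization under Riemann--Hilbert), both $\delta$ and $j_{!*}(\Omega)$ are self-dual with no shift; taken literally, your shift would displace the $\Ext$ degrees by $d$ in the Verdier-duality derivation of the first equality. Since your alternative adjunction argument for $\Ext^m(j_{!*}(\Omega),\delta)$ is valid, this is only a conventions slip, but it should be fixed.
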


\begin{proof}
  The first equality follows easily from Verdier duality, since the $D$-modules 
$\delta$ and $j_{!*}(\Omega)$ are self-dual.  We prove
  the second equality.  Let $i: 0 \hookrightarrow X$ be the
  inclusion. Using the adjunction of $i^!$ and $i_*$, 
we have
 \[ 
  \Ext^m(\delta,j_{!*}(\Omega))= 
  \Ext^m(i_*\Bbb K,j_{!*}(\Omega))= 
  \Ext^m(\Bbb K,i^!j_{!*}(\Omega)). 
  \]
Now, since $i^!$ and $i^*$ are interchanged by Verdier duality,
we have 
\[
  \Ext^m(\Bbb K,i^!j_{!*}(\Omega))=
  \Ext^m(\Bbb D i^!j_{!*}(\Omega),\Bbb K)= 
\]
\[
  \Ext^m(i^*\Bbb D j_{!*}(\Omega),\Bbb K)= 
  \Ext^m(i^*j_{!*}(\Omega),\Bbb K)=
H^{-m}(i^*j_{!*}(\Omega))^*.
  \]
  There is a standard exact triangle
  \[ \to j_{!*}(\Omega)\to j_*(\Omega) \to K\to, \]
  where $K$ is a complex concentrated at $0$ and in nonnegative degrees.
  Applying $i^*$ to this triangle, we obtain
  \[
  \to i^*j_{!*}(\Omega)\to i^*j_*(\Omega) \to i^*K\to,
  \]
  where $i^*K$ is concentrated in nonnegative degrees.  Since
  $i^*j_{!*}(\Omega)$ is concentrated in negative degrees,
  $i^*j_{!*}(\Omega)=\tau^{<0}(i^*j_*(\Omega))$, where $\tau^{<0}$ is the truncation. 
  Thus, $H^{-m}(i^*j_{!*}(\Omega))=H^{-m}(i^*j_*(\Omega))$. Since $X$ is
  conical, this is naturally isomorphic to $H^{d-m}(X, j_*(\Omega))$,
  and hence to $H^{d-m}(U, \Omega) \cong H^{d-m}(U,{\mathbb K})$.
\end{proof}

\begin{corollary}\label{semis1}
If $X$ is a Kleinian surface, then 
\[
\Ext^1(\delta,j_{!*}(\Omega))=
\Ext^1(j_{!*}(\Omega),\delta)=0.
\]
\end{corollary}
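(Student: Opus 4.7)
The plan is to apply Lemma \ref{semis} directly and then reduce the remaining claim to a topological fact about finite subgroups of $SL_2$. First, I verify the hypotheses of Lemma \ref{semis}: a Kleinian surface $X$ of type ADE is by definition the affine quotient $\mathbb{K}^2/\Gamma$ by a finite subgroup $\Gamma \subset SL_2(\mathbb{K})$ acting linearly. Hence $X$ is irreducible of dimension $d=2$, the scaling $\mathbb{K}^\times$-action on $\mathbb{K}^2$ descends to a contracting $\mathbb{K}^\times$-action on $X$ with unique fixed point $0$, and the complement $U = X \setminus \{0\} = (\mathbb{K}^2 \setminus 0)/\Gamma$ is smooth because $\Gamma$ acts freely on $\mathbb{K}^2 \setminus 0$. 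So Lemma \ref{semis} applies.

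Setting $m=1$ and $d=2$ in Lemma \ref{semis} gives
\[
\Ext^1(\delta, j_{!*}(\Omega)) \;=\; \Ext^1(j_{!*}(\Omega), \delta) \;=\; H^1(U,\mathbb{K})^*,
\]
so the corollary is reduced to showing that $H^1(U, \mathbb{K}) = 0$.

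To prove this vanishing, I reduce by the Lefschetz principle to the case $\mathbb{K} = \mathbb{C}$, where the algebraic cohomology of $U$ agrees with the singular cohomology of its analytification. The space $\mathbb{C}^2 \setminus 0$ deformation retracts onto the unit sphere $S^3$, on which $\Gamma \subset SU(2)$ acts freely, so $U$ is homotopy equivalent to the spherical space form $S^3/\Gamma$. Consequently $\pi_1(U) \cong \Gamma$ and, by Hurewicz, $H_1(U, \mathbb{Z}) \cong \Gamma^{\mathrm{ab}}$. By the McKay classification, $\Gamma$ is a binary polyhedral group (cyclic, binary dihedral, binary tetrahedral, binary octahedral, or binary icosahedral), and in every case a direct inspection shows that $\Gamma^{\mathrm{ab}}$ is finite. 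Therefore
\[
H^1(U, \mathbb{K}) \;=\; \mathrm{Hom}(\Gamma^{\mathrm{ab}}, \mathbb{K}) \;=\; 0,
\]
since $\mathbb{K}$ has characteristic zero, which completes the argument.

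There is no real obstacle beyond what Lemma \ref{semis} already supplies: the only non-formal input is the elementary fact that the abelianization of every finite subgroup of $SL_2(\mathbb{C})$ is finite, which follows case by case from the standard presentations of the binary polyhedral groups.
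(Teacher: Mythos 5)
Your proof is correct and follows essentially the same route as the paper: apply Lemma \ref{semis} with $d=2$, $m=1$, and observe that $H^1(U,\KK)=0$ because $U=(\mathbb{A}^2\setminus 0)/\Gamma$ for a finite $\Gamma\subset SL_2$. The paper simply asserts this vanishing, whereas you spell it out via $S^3/\Gamma$ and the finiteness of $\Gamma^{\mathrm{ab}}$ (one could also note directly that $H^1(U,\KK)=H^1(\mathbb{A}^2\setminus 0,\KK)^\Gamma=0$ in characteristic zero), but this is the same argument in substance.
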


\begin{proof}
In the Kleinian case, $d=2$ and $H^1(U)=0$, since 
$U=(\mathbb A^2\setminus 0)/\Gamma$, where 
$\Gamma$ is a finite subgroup in $SL_2$.
Thus, the corollary is a special case of Lemma \ref{semis}.  
\end{proof} 

\begin{corollary}\label{klein}
  If $X$ is a Kleinian surface, then
  $M(X)=j_{!*}(\Omega)\oplus \mu \delta$, where $\mu$ is the Milnor
  number of $X$.
\end{corollary}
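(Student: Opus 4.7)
The plan is to combine the structural results of this section with a direct calculation of the multiplicity of $\delta$ in $M(X)$. Since $X$ has exactly two symplectic leaves---the smooth open part $U=X\setminus\{0\}$ and the origin---Proposition \ref{locsys} shows that the Jordan-H\"older factors of $M(X)$ are $j_{!*}(\Omega)$ with multiplicity one and $k\ge 0$ copies of the delta $D$-module $\delta$ at $0$. By Corollary \ref{semis1}, both $\Ext^1(\delta,j_{!*}(\Omega))$ and $\Ext^1(j_{!*}(\Omega),\delta)$ vanish, so all iterated extensions split: $M(X)\cong j_{!*}(\Omega)\oplus k\delta$. It remains only to identify $k$ with the Milnor number $\mu$.

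Because $j_{!*}(\Omega)$ and $\delta$ are non-isomorphic simple $D$-modules, $\Hom(j_{!*}(\Omega),\delta)=0$, and hence $k=\dim \Hom(M(X),\delta)$. By Proposition \ref{invdi1}, this equals $\dim \mathcal{M}(X,0)$. Now $X$ is a conical affine Poisson variety: the ring $\OO_X$ is nonnegatively graded with finite-dimensional graded pieces (inherited from the standard grading on $\KK^2$), and the Poisson bracket is homogeneous of degree $-2$. Proposition \ref{invdi3}, applied with $\phi=\id$, therefore yields $\mathcal{M}(X,0)\cong HP_0(\OO_X)^*$, and so $k=\dim HP_0(\OO_X)$.

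Finally, I would establish $\dim HP_0(\OO_X)=\mu$ by a direct calculation from the standard ADE presentation $\OO_X=\KK[u,v,w]/(f_\Gamma)$ with its induced Poisson bracket. In type $A_{n-1}$ the check is straightforward: using the relations $\{u,w\}\propto u$, $\{v,w\}\propto v$, $\{u,v\}\propto w^{n-1}$ and the Leibniz rule, a degree-by-degree argument shows that every monomial involving $u$ or $v$ as well as every power $w^j$ for $j\ge n-1$ lies in $\{\OO_X,\OO_X\}$, leaving the classes $[1],[w],\ldots,[w^{\mu-1}]$ as a basis. The analogous explicit verification in types $D$ and $E$ is the main obstacle in carrying out the plan, although in each case it reduces to a finite (if somewhat tedious) computation in the explicit presentation. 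A cleaner alternative, if one is willing to invoke it, is the identification $HP_0(\OO_X)\cong H^2(\widetilde X,\KK)$, where $\widetilde X\to X$ is the minimal symplectic resolution, since $H^2(\widetilde X,\KK)$ is spanned by the fundamental classes of the $\mu$ exceptional rational curves arranged in the Dynkin diagram of $X$.
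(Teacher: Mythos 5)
Your structural argument is sound and runs parallel to the paper's: Proposition \ref{locsys} plus the $\Ext^1$-vanishing of Corollary \ref{semis1} gives $M(X)\cong j_{!*}(\Omega)\oplus k\delta$, exactly as in the paper. Where you diverge is in extracting the multiplicity $k$: you identify $k=\dim\Hom(M(X),\delta)=\dim\mathcal{M}(X,0)$ and then use Propositions \ref{invdi1} and \ref{invdi3} (the graded/conical case) to get $k=\dim HP_0(\OO_X)$, whereas the paper pushes forward to a point, uses the fact that the top intersection cohomology of the affine cone $X=\mathbb{A}^2/\Gamma$ vanishes to conclude $p_0(j_{!*}\Omega)=0$, and then invokes Proposition \ref{hp0} to get $\dim p_0(M(X))=k=\dim HP_0(\OO_X)$. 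Both routes are valid and of comparable length; yours is more local (no direct image needed) and the paper's avoids the grading hypotheses of Proposition \ref{invdi3}.

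The gap is in the final step, the equality $\dim HP_0(\OO_X)=\mu$. As written you only verify it in type $A$, explicitly acknowledge that types $D$ and $E$ are ``the main obstacle,'' and offer as an alternative the identification $HP_0(\OO_X)\cong H^2(\widetilde X,\KK)$ with the resolution --- but that identification is itself a nontrivial statement not established anywhere in this paper, so invoking it does not close the argument (at best it is equivalent in strength to what you are trying to prove, since the honest route to it goes through the same computation of $HP_0$). The paper simply cites \cite{AL}, where $HP_0(\OO_X)$ is shown to be isomorphic to the Jacobi (Milnor) ring of the defining equation, hence of dimension $\mu$, uniformly in all ADE types. So either cite that result, or actually carry out the $D$ and $E$ computations; as it stands the proof is complete only for $A_{n-1}$.
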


\begin{proof}
  By Corollary \ref{semis1}, $M(X)=j_{!*}(\Omega)\oplus n \delta$, for
  some nonnegative integer $n$.  Also, it is known that the
  intersection cohomology of the Kleinian surface equals its ordinary
  cohomology, since the Kleinian surface is the quotient of an affine
  plane by a finite group. The $i$-th intersection cohomology group is
  $H^{i-\dim X}(p_*(IC(X))) = H^{i-\dim X}(p_*(j_{!*} \Omega))$, where
  $p_*$ is the (derived) direct image under the projection
  $p: X \to pt$.  Thus, the underived direct image
  $p_0(j_{!*}(\Omega))$ is zero, and hence $\dim p_0(M(X))=n$.  By
  Proposition \ref{hp0}, $\dim HP_0(\OO_X)=n$.  On the other hand,
  \cite{AL} shows that $\dim HP_0(\OO_X)=\mu$. We deduce that
  $n = \mu$.
\end{proof} 

\subsection{The local systems attached to symplectic leaves}\label{locs}

We now give more precise information about the structure of $M(X)$. 

For each symplectic leaf $S$ of $X$, let $U$ be the complement in $X$
of $\bar S\setminus S$, and $i_S: S\to U$ be the corresponding closed
embedding.  Set $L_S=H^0(i_S^*(M(U)))$. It is easy to see that $L_S$
is a local system. Moreover, there is a surjective adjunction morphism
$\phi_S: M(U)\to (i_S)_*L_S$.  In particular, the intermediate
extensions of all composition factors of $L_S$ are composition factors
of $M(X)$.

To compute the local systems $L_S$, we use the equality
\begin{equation}\label{lseqn}
  \Hom(L_S,N)=\Hom(M(U),(i_S)_*(N))=(i_S)_*(N)^{\OO_U},
\end{equation}
for any local system $N$ on $S$, where the superscript $\OO_U$ denotes
the invariants with respect to Hamiltonian vector fields.

For $s\in S$, let $X_s$ be the formal neighborhood of $s$ in $X$.  By
the formal Darboux-Weinstein theorem (\cite{We}; see also
\cite[Proposition 3.3]{Kalss}), $X_s$ is Poisson isomorphic to the
product $X_s^0\times S_s$, where the ``slice'' $X_s^0$ is a reduced
formal Poisson scheme with a unique closed point $0$ and a Poisson
structure vanishing at this point.

Consider the space $HP_0(\OO_{X_s^0})$.  Since $X_s$ has finitely many
symplectic leaves (which are the intersections of $X_s$ with the
symplectic leaves of $X$), so does $X_s^0$, and thus
$HP_0(\OO_{X_s^0})$ is finite-dimensional by Proposition \ref{finit}.

Let $\delta_{S_s}:=(i_{S_s})_*\Omega_{S_s}$ be the \emph{delta-function $D$-module}
of $S_s\subset X_s$.  Let
$\mathcal M(X_s, S_s) := \Hom(M(X_s), \delta_{S_s})$.

\begin{lemma}\label{cano} 
  The space $HP_0(\OO_{X_s^0})^*$ is canonically isomorphic to
  $\mathcal M(X_s, S_s)$.
\end{lemma}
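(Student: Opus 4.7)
The plan is to reduce the computation of $\mathcal M(X_s, S_s)$ to the case of distributions at the single closed point $0 \in X_s^0$ by exploiting the Darboux--Weinstein product decomposition $X_s \cong X_s^0 \times S_s$, and then to identify this latter space with $HP_0(\OO_{X_s^0})^*$ by duality.

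First I would use the splitting $X_s \cong X_s^0 \times S_s$ of formal Poisson schemes to decompose the $D$-module $M(X_s)$ as an external tensor product. Since the Poisson bracket on a product factors as a sum of the brackets along each factor, a direct computation in $D_{X_s}$ shows that the right ideal generated by $\text{HVect}(X_s)$ coincides with the one generated by $\text{HVect}(X_s^0) \boxtimes 1 + 1 \boxtimes \text{HVect}(S_s)$; hence $M(X_s) \cong M(X_s^0) \boxtimes M(S_s) = M(X_s^0) \boxtimes \Omega_{S_s}$, using that $S_s$ is symplectic so $M(S_s) = \Omega_{S_s}$. Under the same decomposition, $S_s \subset X_s$ corresponds to $\{0\} \times S_s$, so $\delta_{S_s} \cong \delta_0 \boxtimes \Omega_{S_s}$, where $\delta_0$ is the delta-function $D$-module of $0 \in X_s^0$.

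Next, since $\Omega_{S_s}$ is irreducible with endomorphism ring $\KK$, a K\"unneth-style argument collapses
\[
\mathcal M(X_s, S_s) = \Hom\bigl(M(X_s^0) \boxtimes \Omega_{S_s},\; \delta_0 \boxtimes \Omega_{S_s}\bigr) = \Hom(M(X_s^0), \delta_0) = \mathcal M(X_s^0, 0),
\]
where the last equality is the formal analogue of Proposition \ref{invdi1}. Finally, a distribution on $X_s^0$ supported at $0$ is by definition a continuous linear functional on $\OO_{X_s^0}$ in the $\mathfrak m_0$-adic topology, and its invariance under all Hamiltonian vector fields is the condition that it vanish on $\{\OO_{X_s^0}, \OO_{X_s^0}\}$; hence it factors through $HP_0(\OO_{X_s^0})$, which is Hausdorff by Proposition \ref{clo}(i) and finite-dimensional by Proposition \ref{finit}. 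Since any linear functional on a finite-dimensional vector space is automatically continuous, we obtain the claimed canonical identification $\mathcal M(X_s^0, 0) \cong HP_0(\OO_{X_s^0})^*$.

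The main obstacle will be rigorously establishing the tensor decompositions $M(X_s) \cong M(X_s^0) \boxtimes \Omega_{S_s}$ and $\delta_{S_s} \cong \delta_0 \boxtimes \Omega_{S_s}$ in the formal-scheme setting, where the external tensor product must be interpreted with appropriate completions, and where the Darboux--Weinstein isomorphism (a theorem about the underlying Poisson scheme) needs to be promoted to a statement about the associated $D$-modules. Once these decompositions are in hand, the Hom collapse via $\End(\Omega_{S_s}) = \KK$ and the final identification with continuous Poisson traces are formal consequences.
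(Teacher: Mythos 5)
Your proposal is correct and follows essentially the same route as the paper: the Darboux--Weinstein splitting $X_s \cong X_s^0 \times S_s$ gives $M(X_s) \cong M(X_s^0) \boxtimes \Omega_{S_s}$ and $\delta_{S_s} \cong \delta_0 \boxtimes \Omega_{S_s}$, after which the Hom collapses to $\mathcal M(X_s^0,0)$ and is identified with $HP_0(\OO_{X_s^0})^*$ exactly as in Proposition \ref{invdi1} and the formal-scheme results (Propositions \ref{clo} and \ref{finit}). You merely spell out the K\"unneth collapse via $\End(\Omega_{S_s})=\KK$ and the continuity/duality step, which the paper leaves implicit.
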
 

\begin{proof} 
  By Proposition \ref{invdi1},
  $M(X_s) \cong M(X_s^0) \boxtimes M(S_s) \cong M(X_s^0) \boxtimes
  \Omega_{S_s}$,
  and
  $\Hom(M_{X_s^0}, \delta_0) = \mathcal M(X_s^0, 0) \cong
  HP_0(\OO_{X_s^0})^*$.
\end{proof}

Hence, the space $HP_0(\OO_{X_s^0})$ is
canonically attached to the point $s$, and is independent (up to a
canonical isomorphism) of the Darboux-Weinstein decomposition of
$X_s$.

\begin{proposition}\label{flatconn}
  The family $s\mapsto HP_0(\OO_{X_s^0})={\mathcal M}(X_s,S_s)^*$ is a
  vector bundle on $S$ which carries a natural flat connection. In
  other words, $HP_0(\OO_{X_s^0})$ is a local system.
\end{proposition}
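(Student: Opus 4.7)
The plan is to deduce Proposition \ref{flatconn} by realizing the family $s \mapsto HP_0(\OO_{X_s^0}) \cong \mathcal{M}(X_s,S_s)^*$ as the dual of the local system $L_S = H^0(i_S^* M(U))$ introduced in the paragraph preceding \eqref{lseqn}. This reduces the proposition to two sub-tasks: (a) showing that $L_S$ is indeed a local system on $S$, and (b) identifying its fiber at each point $s \in S$ naturally with $\mathcal{M}(X_s,S_s)$.

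For (a), I invoke the proof of Theorem \ref{holo}: the singular support of $M(U)$ is contained in the union of conormal bundles to the symplectic leaves of $U$, so along $S$ it lies in $T^*_S U$. Hence $i_S^* M(U)$ is concentrated in cohomological degree zero and is $\OO_S$-coherent, and any $\OO_S$-coherent right $D$-module on a smooth variety is automatically a vector bundle with a flat connection. Thus $L_S$ is a local system.

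For (b), I pass to the formal neighborhood $X_s$ of $s$ in $U$ and invoke the formal Darboux-Weinstein decomposition $X_s \cong X_s^0 \times S_s$. Combined with the Künneth-type identification $M(X_s) \cong M(X_s^0) \boxtimes \Omega_{S_s}$ (which holds because Hamiltonian vector fields on a product Poisson variety are generated by those coming from pullbacks from each factor, so the defining relations of $M$ split accordingly), the fiber of $L_S$ at $s$ decouples into a tangential factor that restricts trivially to $S_s$ and a transverse factor whose restriction to the unique closed point $0 \in X_s^0$ is naturally identified, via Lemma \ref{cano} together with a formal-scheme analogue of Proposition \ref{invdi1}, with $\mathcal{M}(X_s^0,0) \cong HP_0(\OO_{X_s^0})^*$. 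Taking duals of $L_S$ then endows the family $s \mapsto HP_0(\OO_{X_s^0})$ with the claimed vector bundle and flat connection structure.

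The principal obstacle is making (b) fully rigorous: one must work within the formal-scheme framework of \S 3.5 to interpret $M(X_s^0)$ and $M(X_s)$ correctly, justify the Künneth decomposition $M(X_s) \cong M(X_s^0) \boxtimes \Omega_{S_s}$, and verify that the fiber of $L_S$ at $s$ is naturally computed by these formal data in a manner compatible with varying $s$, so that the local system structure on $L_S$ transfers correctly by duality to the family of interest. Modulo these formal-geometry technicalities, the rest is immediate from (a) and the results already developed earlier in the paper.
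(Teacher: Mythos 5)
Your overall strategy---show that $L_S$ is a local system via the singular support estimate from the proof of Theorem \ref{holo}, then transfer that structure to the family $s \mapsto HP_0(\OO_{X_s^0})$ by identifying fibers---is reasonable and close in spirit to the paper's treatment of Proposition \ref{rankls}, but it has a genuine gap at precisely the step you defer as a ``formal-geometry technicality.'' The paper's proof of Proposition \ref{flatconn} is a direct construction: a Darboux--Weinstein decomposition $\zeta: X_s\cong X_s^0\times S_s$ trivializes the family in the formal neighborhood of $s$ (this is what gives the vector bundle structure and a connection $\nabla_\zeta$ there), and the one substantive point is that $\nabla_\zeta$ does not depend on $\zeta$: any two decompositions are related by a formal Hamiltonian automorphism, and the Lie algebra of Hamiltonian vector fields on $X_s$ acts trivially on ${\mathcal M}(X_s,S_s)$, so the locally defined connections glue to a canonical global one. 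This independence-of-$\zeta$ argument is exactly what your outline omits. Without it, your pointwise identifications of $L_S(s)$ with ${\mathcal M}(X_s,S_s)$ (or its dual) are defined only up to a choice of decomposition at each $s$, so it is not established that they vary algebraically in $s$ or assemble into an isomorphism of sheaves; consequently the structure you propose to transport from $L_S$ is neither well defined nor ``natural'' in the sense the proposition asserts. The content of \ref{flatconn} is not merely that some local system with the right fibers exists (that follows from your step (a) together with \ref{rankls}-type fiber computations), but that the family itself carries a canonical structure, and the triviality of the Hamiltonian action on ${\mathcal M}(X_s,S_s)$ is the heart of the matter rather than a routine technicality.

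Two smaller points. First, your claim that $i_S^*M(U)$ is concentrated in cohomological degree zero is unnecessary (and not obviously true); all that is needed is that $H^0(i_S^*M(U))=L_S$ is $\OO$-coherent, which does follow from the singular support of $M(U)$ being contained in the union of the conormal bundles of the leaves. Second, your duality bookkeeping disagrees with \eqref{lseqn}: in the point case the paper's computation in the proof of Proposition \ref{rankls} gives $L_S(s)=\Hom(M(X),\delta_s)^*={\mathcal M}(X_s,S_s)^*\cong HP_0(\OO_{X_s^0})$, i.e.\ the fiber of $L_S$ is already $HP_0(\OO_{X_s^0})$ and no further dualization should be taken; this does not affect whether one obtains a local system, but it matters for consistency with Proposition \ref{rankls}.
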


\begin{remark} 
  To be more precise, a vector bundle with a flat connection is
  naturally a left $D$-module, while we are working in the setting of
  right $D$-modules. So, strictly speaking, we must tensor our flat
  bundles with the canonical sheaf. Luckily, all flat bundles we
  consider are on symplectic varieties, so the canonical sheaf is
  canonically trivial, and the issue does not arise.
\end{remark}   

\begin{proof}
  Any Darboux-Weinstein decomposition
  $\zeta: X_s\cong X_s^0\times S_s$ trivializes the family in question
  in the formal neighborhood of $s$, which shows that this family is
  indeed a vector bundle.  Moreover, this trivialization defines a
  flat connection $\nabla_\zeta$ on this bundle in the formal
  neighborhood of $s$. Since the Lie algebra of Hamiltonian vector
  fields on $X_s$ acts trivially on ${\mathcal M}(X_s,S_s)$, the
  connection $\nabla_\zeta$ does not really depend on the choice of
  $\zeta$, and hence is defined globally on $S$.
\end{proof}

\begin{proposition}\label{rankls}
  There is a natural isomorphism of local systems
  $\psi_S: L_S\cong HP_0(\OO_{X_s^0})$.
\end{proposition}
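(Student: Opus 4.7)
The plan is to identify $L_S$ restricted to the formal neighborhood $S_s$ of each $s \in S$ with $HP_0(\OO_{X_s^0}) \otimes_\K \Omega_{S_s}$, and then verify that these pointwise identifications assemble into a global isomorphism of local systems compatible with the flat connection of Proposition \ref{flatconn}.

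For the local identification, I use the formal version of Lemma \ref{locpm}: the restriction of $M(U)$ to the formal neighborhood $X_s$ is $M(X_s)$, so $L_S|_{S_s} = H^0(\hat{i}_s^* M(X_s))$, where $\hat{i}_s : S_s \hookrightarrow X_s$ is the completion of $i_S$. The formal Darboux--Weinstein decomposition $X_s \cong X_s^0 \times S_s$ then gives, via the same K\"unneth-type factorization for $M$ used in the proof of Lemma \ref{cano}, a canonical isomorphism $M(X_s) \cong M(X_s^0) \boxtimes \Omega_{S_s}$. To pin down $L_S|_{S_s}$, I apply equation (\ref{lseqn}) in this formal setting: for any finite-dimensional $\K$-vector space $V$, viewing $V \otimes \Omega_{S_s}$ as the trivial local system of fiber $V$ on $S_s$, we have
\begin{equation*}
\Hom\bigl(L_S|_{S_s},\, V\otimes \Omega_{S_s}\bigr)
= \bigl((\hat{i}_s)_*(V \otimes \Omega_{S_s})\bigr)^{\OO_{X_s}}
= V \otimes (\delta_{S_s})^{\OO_{X_s}}
= V \otimes \mathcal{M}(X_s, S_s),
\end{equation*}
where the final space equals $V \otimes HP_0(\OO_{X_s^0})^*$ by Lemma \ref{cano}. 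Since this functor of $V$ is naturally isomorphic to $\Hom_\K(HP_0(\OO_{X_s^0}), V)$, Yoneda yields the natural isomorphism $L_S|_{S_s} \cong HP_0(\OO_{X_s^0}) \otimes_\K \Omega_{S_s}$.

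To globalize, I observe that two Darboux--Weinstein decompositions of $X_s$ differ by a Hamiltonian automorphism of $X_s$, which acts trivially on $\mathcal{M}(X_s^0, 0)$ and hence on $HP_0(\OO_{X_s^0})^*$. This is the same observation used in Proposition \ref{flatconn} and shows that the local identification is canonical, patches consistently across the formal neighborhoods $S_s$, and intertwines the flat connection on $L_S$ with the connection defined on the family $\{HP_0(\OO_{X_{s'}^0})\}_{s' \in S}$ in Proposition \ref{flatconn}. The main technical obstacle I anticipate is justifying the formal-scheme version of (\ref{lseqn}) and, in particular, the K\"unneth computation $((\hat{i}_s)_*(V \otimes \Omega_{S_s}))^{\OO_{X_s}} = V \otimes \mathcal{M}(X_s, S_s)$ on the formal product $X_s^0 \times S_s$: this requires carefully separating the action of Hamiltonian vector fields coming from the two factors of the product Poisson structure, but should follow from an argument parallel to the analysis in Sections 3.5--3.6.
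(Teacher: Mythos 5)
Your proposal is correct and follows essentially the same route as the paper: both compute the fiber (or formal-neighborhood restriction) of $L_S$ via \eqref{lseqn}, use the Darboux--Weinstein splitting $M(X_s)\cong M(X_s^0)\boxtimes \Omega_{S_s}$ and Lemma \ref{cano} to identify it with $HP_0(\OO_{X_s^0})$, and globalize by noting that different trivializations differ by formal Hamiltonian automorphisms acting trivially, which also gives compatibility with the flat connection of Proposition \ref{flatconn}. The only cosmetic difference is that you phrase the local step as a Yoneda argument on the formal neighborhood $S_s$, whereas the paper first treats the case of a point leaf and then defines $\psi_s$ fiberwise.
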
 
\begin{proof}
  Consider first the case when $S=s$ is a single point (and $X=U$).
  In this case, let $\delta := \delta_s$ be the delta-function $D$-module
  of $s \in S$. By \eqref{lseqn},
  \[
  L_S(s)=\Hom(M(X),\delta)^*,
  \]
  But, $\Hom(M(X),\delta)$ is the space of Hamiltonian invariant
  distributions on $X$ supported at $s$, which is canonically
  isomorphic to $HP_0(\OO_{X_s^0})^*$.

  In the general case, for each $s\in S$ fix a Darboux-Weinstein trivialization 
$X_s\cong X_s^0\times S_s$, as explained above. Then the above construction defines an isomorphism 
$\psi_s: L_S(s)\cong HP_0(\OO_{X_s^0})$. It is easy to check that this isomorphism 
does not depend on the choice of the Darboux-Weinstein trivialization (by relating any two such 
trivializations by a formal Hamiltonian automorphism). Therefore, the collection of maps $\psi_s$, $s\in S$,  
defines a canonical isomorphism of vector bundles 
$\psi_S: L_S\cong HP_0(\OO_{X_s^0})$. Moreover, $\psi_S$ 
preserves the flat connections on these bundles, since upon 
a choice of a Darboux-Weinstein trivialization, 
the two bundles and connections become trivial, and 
the map $\psi_s$ becomes independent on $s$.
\end{proof}

\begin{example}
  The following is an example of computation of $L_S$, which also
  demonstrates that the local systems $L_S$ (and hence the local
  systems in Proposition \ref{locsys}) need not have regular
  singularities.

  Let $Z$ be the cone of the elliptic curve $F=x^3+y^3+z^3=0$,
  equipped with the standard Poisson structure, given generically by
  the symplectic form $\frac{dx\wedge dy\wedge dz}{dF}$. This Poisson
  structure has degree zero, so the Euler derivation
  $E: \OO_Z\to \OO_Z$ is Poisson.  Let $X=Z\times \mathbb A^2$, where
  $\mathbb A^2$ has coordinate functions $p$ and $q$ with
  $\lbrace{p,f\rbrace}=Ef$ and $\lbrace{q,f\rbrace}=0$ for
  $f\in \OO_Z$, and $\lbrace{p,q\rbrace}=1$. Then, $X$ is a Poisson
  variety with two symplectic leaves: the open four-dimensional leaf,
  and the two-dimensional leaf $S=0\times \mathbb A^2$.

\begin{proposition}
  $L_S$ is isomorphic to $N_0\oplus 3N_1\oplus 3N_2\oplus N_3$, where
  $N_m$, $m\ge 0$, is the quotient of the algebra of differential
  operators in $p$ and $q$ by the right ideal generated by
  $\partial_q-m$ and $\partial_p$ (i.e., $N_0=\Omega$).
\end{proposition}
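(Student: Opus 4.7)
The plan is to apply Propositions~\ref{rankls} and~\ref{flatconn} to identify $L_S$ with the local system $s\mapsto HP_0(\OO_{X_s^0})$, then to compute both the fiber and the flat connection explicitly. First, $X$ has exactly two symplectic leaves: at any $s \in S$, the Hamiltonian fields $\xi_p = \partial_q + E$, $\xi_q = -\partial_p$, and $\xi_f$ for $f \in \OO_Z$ restrict to $\partial_q, -\partial_p, 0$ respectively (since $E$ and the $Z$-Poisson bracket both vanish at $0 \in Z$), spanning only $T_s S$; off $S$, $Z$ is symplectic and $E$ is nonzero, so the rank is full. Hence $U = X$ and $L_S = H^0(i_S^* M(X))$.

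The key observation is the formal Poisson isomorphism, valid in $\widehat X_{s_0}$ for $s_0 = (0, p_0, q_0)$,
\[ \tilde p = p, \quad \tilde q = q, \quad \tilde x = e^{-(q-q_0)} x, \quad \tilde y = e^{-(q-q_0)} y, \quad \tilde z = e^{-(q-q_0)} z. \]
Using $\{p, x\} = Ex = x$ and its cyclic analogues, one verifies $\{\tilde p, \tilde x\} = 0$, $\{\tilde p, \tilde q\} = 1$, $\{\tilde q, \tilde x\} = 0$, and $\{\tilde y, \tilde z\} = 3\tilde x^2$ (cyclic), while $F = e^{3(q-q_0)} \tilde F$, so $X$ is cut out by $\tilde F = \tilde x^3 + \tilde y^3 + \tilde z^3 = 0$ in the new coordinates. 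Thus $\widehat X_{s_0} \cong \widehat Z_0 \times \widehat{\mathbb A^2_{s_0}}$ as formal Poisson schemes with the product structure, which identifies $X_{s_0}^0 \cong \widehat Z_0$. A direct calculation then gives $\{\OO_Z, \OO_Z\} = (x^2, y^2, z^2)\OO_Z$ (it is generated as an $\OO_Z$-module by $3x^2, 3y^2, 3z^2$, and every bracket lies in this submodule by expansion), and since $F \in (x^2, y^2, z^2)$,
\[ HP_0(\OO_{X_s^0}) \cong HP_0(\OO_Z) \cong \KK[x,y,z]/(x^2, y^2, z^2), \]
which is $8$-dimensional with degree-grading Hilbert series $1 + 3t + 3t^2 + t^3$.

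For the $D$-module structure of $L_S$, I would track how the Darboux--Weinstein trivializations vary with $s_0$. A degree-$d$ class $x^\alpha \in HP_0$ with $|\alpha| = d$ is realized in the fiber at $s_0$ by $\tilde x^\alpha = e^{-d(q-q_0)} x^\alpha$ in original coordinates, differing from its realization at $s_1$ by the scalar $e^{d(q_1 - q_0)}$. Equivalently, the section $v_\alpha = e^{-dq} x^\alpha \otimes dp\, dq$ of $M(X)$ along $S$ satisfies $v_\alpha \cdot \partial_p = 0$ and $v_\alpha \cdot \partial_q = d v_\alpha$, which is consistent with the relation $\partial_q = -E$ in $M(X)$ (from $\xi_p = \partial_q + E = 0$) together with the fact that $E$ acts on the degree-$d$ component of $HP_0(\OO_Z)$ by multiplication by $d$. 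These are precisely the defining relations of $N_d$, so summing over degrees yields
\[ L_S \cong \bigoplus_{d=0}^{3} HP_0^{(d)}(\OO_Z) \otimes N_d \cong N_0 \oplus 3N_1 \oplus 3N_2 \oplus N_3. \]

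The main obstacle is the transcendental nature of the substitution $\tilde x = e^{-(q-q_0)} x$: it exists only in the formal completion at each $s_0$, not globally in the algebraic category. To handle this rigorously, one defines the Darboux--Weinstein splitting formally at each $s_0 \in S$ and observes that the nontrivial twist $e^{d(q_1 - q_0)}$ relating the splittings at different base points encodes precisely the irregular algebraic connection $N_d$ in degree $d$.
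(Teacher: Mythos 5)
Your overall route is essentially the paper's: identify $L_S$ with the local system $s\mapsto HP_0(\OO_{X_s^0})$ via Propositions \ref{flatconn} and \ref{rankls}, identify the slice with (the completion of) $Z$, and read off the $D$-module structure from $\lbrace p,f\rbrace=Ef$, $\lbrace q,f\rbrace=0$. Your explicit substitution $\tilde x=e^{-(q-q_0)}x$, etc., is a correct and rather pleasant way of making the Darboux--Weinstein splitting and the resulting flat connection concrete, and your conclusion that the degree-$d$ eigenspace contributes $N_d$ agrees with the paper, which gets this in one line from the same two bracket identities.

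The genuine gap is the input $HP_0(\OO_Z)\cong\KK[x,y,z]/(x^2,y^2,z^2)$. Your parenthetical ``direct calculation'' only establishes the easy inclusion $\lbrace\OO_Z,\OO_Z\rbrace\subseteq(x^2,y^2,z^2)\OO_Z$ (every bracket is an $\OO_Z$-combination of $F_x,F_y,F_z$ by the Jacobian formula), and the phrase ``generated as an $\OO_Z$-module by $3x^2,3y^2,3z^2$'' presupposes that the linear span $\lbrace\OO_Z,\OO_Z\rbrace$ is an $\OO_Z$-submodule, which is not automatic: the Leibniz rule gives $\lbrace a,bc\rbrace=\lbrace a,b\rbrace c+b\lbrace a,c\rbrace$, so the span of brackets need not be an ideal, and the substantive point is precisely the reverse inclusion, that every element of the Jacobian ideal is a finite sum of brackets. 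Without it you have proved neither $\dim HP_0(\OO_Z)=8$ nor the Euler eigenvalues $0,1,1,1,2,2,2,3$, i.e., exactly the multiplicities $1,3,3,1$ in the statement. The paper does not compute this by hand; it invokes the result of \cite{AL} that $HP_0(\OO_Z)$ is the Jacobi ring of the quasi-homogeneous isolated singularity $F=x^3+y^3+z^3$. To close the gap, either cite that result as the paper does, or give an actual proof of $(x^2,y^2,z^2)\OO_Z\subseteq\lbrace\OO_Z,\OO_Z\rbrace$ (for instance via the identification of brackets with exact $2$-forms on the surface); the one-line expansion you offer does not do it.
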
 

\begin{proof} 
  According to \cite{AL}, the space $HP_0(\OO_Z)$ is naturally
  isomorphic to the Jacobi ring of the elliptic singularity $F=0$
  (i.e., the ring generated by $x,y,z$ with the relations
  $F_x=F_y=F_z=0$).  This ring has a basis
  $(1,x,y,z,xy,xz,yz,xyz)$. This implies that $\dim HP_0(\OO_Z)=8$, and
  the eigenvalues of $E$ on $HP_0(\OO_Z)$ are $0,1,1,1,2,2,2$, and $3$.  So,
  the result follows from Proposition \ref{rankls} and the equalities
  $\lbrace{p,f\rbrace}=Ef$ and $\lbrace{q,f\rbrace}=0$.
\end{proof}
\end{example} 

\subsection{The structure of $M_\phi(X)$ for linear symplectic quotients}

Let $V$ be a finite-dimensional symplectic vector space, $G$ be a
finite subgroup of $Sp(V)$, and $\phi: V\to V/G$ be the tautological
map. We call a subgroup $K\subset G$ \emph{parabolic} if there exists
$v\in V$ such that $K=G_v$, the stabilizer of $v$.  Let $\Par(G,V)$
denote the set of parabolic subgroups.  For a parabolic subgroup $K$,
denote by $H(K)$ the space
$\OO_{(V^{K})^\perp}/\lbrace{\OO_{(V^K)^\perp}^K,\OO_{(V^K)^\perp}\rbrace}$.
Let $i_K: V^K\to V$ be the embedding of $V^K$ into $V$.

\begin{theorem}\label{quotsin}
  There is a canonical $G$-equivariant isomorphism
  \[
  M_\phi(V)\cong \bigoplus_{K\in \Par(G,V)}H(K)\otimes
  (i_K)_*(\Omega_{V^K}).
  \]
\end{theorem}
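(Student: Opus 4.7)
The plan is to identify the composition factors of $M_\phi(V)$ and their multiplicities via singular-support analysis and a local product decomposition, and then to upgrade the Jordan--H\"older series to a direct sum using $\mathbb{K}^\times$-equivariance.

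\emph{Singular support.} Proceeding exactly as in the proof of Theorem \ref{holo}, $\mathrm{SS}(M_\phi(V))$ is contained in $Z = \{(v,p)\in T^*V : b(p, d(\phi^*f)(v)) = 0 \text{ for all } f \in \OO_V^G\}$. Since $G$ is finite, the image of $d(\phi^*)$ at $v$ is the $G_v$-invariant subspace $(T^*_vV)^{G_v} \cong (V^{G_v})^*$; under the symplectic identification $T^*_vV \cong V$ induced by $\omega$, the defining condition on $p$ becomes $p \in (V^{G_v})^\perp$. Hence
\[
\mathrm{SS}(M_\phi(V)) \subseteq \bigcup_{K \in \Par(G,V)} T^*_{V^K} V.
\]
By the remark after Proposition \ref{locsys} (applied to the stratification of $V$ by the loci $V^K_{\mathrm{reg}} := \{v\in V : G_v = K\}$, which play the role of the strata $X_j^r$), each composition factor of $M_\phi(V)$ is the intermediate extension of an irreducible local system on some $V^K_{\mathrm{reg}}$.

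\emph{Local product decomposition and multiplicities.} Near any $v \in V^K_{\mathrm{reg}}$, the splitting $V = V^K \oplus (V^K)^\perp$ is a $K$-equivariant Poisson isomorphism with $K$ acting trivially on $V^K$. Nearby points have stabilizer inside $K$, so $G$-invariant and $K$-invariant functions coincide locally, and $\phi$ factors through the projection $V \to V^K \times ((V^K)^\perp/K)$ near $v$. This yields a local identification
\[
M_\phi(V)\big|_{\text{near } v} \;\cong\; \Omega_{V^K} \,\boxtimes\, M_{\phi_K}((V^K)^\perp)\big|_{\text{near } 0},
\]
where $\phi_K:(V^K)^\perp \to (V^K)^\perp/K$. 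Since $((V^K)^\perp)^K = 0$, the composition factor of $M_{\phi_K}((V^K)^\perp)$ supported at the origin is the delta-function $D$-module $\delta_0$, with multiplicity $\dim \Hom(M_{\phi_K}((V^K)^\perp), \delta_0) = \dim H(K)$ by Propositions \ref{invdi1} and \ref{invdi3}. Consequently $(i_K)_*\Omega_{V^K}$ occurs in the composition series of $M_\phi(V)$ with multiplicity $\dim H(K)$, and the associated local system on $V^K_{\mathrm{reg}}$ is canonically trivialized by the global splitting $V = V^K \oplus (V^K)^\perp$.

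\emph{Semisimplicity, the main obstacle.} Promoting the above Jordan--H\"older series into a canonical direct sum decomposition is the main technical difficulty. My plan is to use the $\mathbb{K}^\times$-action on $V$ by scaling, which rescales the symplectic form and hence (by the remark that $M_\phi(X)$ is unaffected by rescaling of the Poisson bracket) preserves $M_\phi(V)$. This endows $M_\phi(V)$ and each $(i_K)_*\Omega_{V^K}$ with canonical $\mathbb{Z}$-gradings. Using Verdier self-duality of each composition factor, adjunction for the closed embeddings $V^L \subseteq V^K$, and a Lemma \ref{semis}-type analysis, the vanishing of $\Ext^1$ between distinct composition factors in the $(G \times \mathbb{K}^\times)$-equivariant category reduces to the vanishing of certain cohomology groups of punctured affine subspaces $V^K \setminus V^L$; these vanish in the required degrees since each $V^K$ is a linear subspace. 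Combining this Ext-vanishing with $G$-equivariance (which organizes summands by $G$-orbits of parabolic subgroups) forces the Jordan--H\"older filtration to split, yielding the canonical $G$-equivariant isomorphism
\[
M_\phi(V) \;\cong\; \bigoplus_{K \in \Par(G,V)} H(K) \otimes (i_K)_* \Omega_{V^K}.
\]
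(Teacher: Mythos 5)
Your first two steps (the singular-support bound and the local product decomposition identifying the composition factors as $(i_K)_*\Omega_{V^K}$ with trivial local system and multiplicity space $H(K)$, via Propositions \ref{invdi1} and \ref{invdi3}) agree with the paper's argument. The gap is in the semisimplicity step, which is exactly where the real content lies. The $\Ext^1$-vanishing you need is \emph{not} true ``since each $V^K$ is a linear subspace'': for linear subspaces $V_1\subset V_2$ of codimension one, $\Ext^1(\delta_{V_1},\delta_{V_2})\neq 0$ --- already on $\mathbb{A}^1$ there is the non-split extension $0\to\Omega_{\mathbb{A}^1}\to j_*\Omega_{\mathbb{G}_m}\to\delta_0\to 0$ --- and the cohomology of the punctured space $V^K\setminus V^L$ (homotopy equivalent to a sphere $S^{2c-1}$ when $\KK=\CC$, $c$ the codimension) does not vanish in all degrees; when $c=1$ it is nonzero precisely in the degree governing $\Ext^1$ in a Lemma \ref{semis}-type computation. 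The ingredient you never invoke, and which the paper makes explicit, is that every $V^K$ is a \emph{symplectic} subspace, hence even-dimensional, so any proper inclusion $V^L\subsetneq V^K$ has codimension at least $2$; only then does the computation (the paper does it by writing each $\delta_{V^K}$ as an external product of copies of $\Omega$ and $\delta$ on lines and applying K\"unneth, so that a nonzero $\Ext^1$ would force a codimension-one inclusion) give the required vanishing.

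Two further points about your sketch. First, passing to the $(G\times\KK^\times)$-equivariant category does not eliminate the problematic extensions: $j_*\Omega_{\mathbb{G}_m}$ above is $\KK^\times$-equivariant (regular singularities, trivial monodromy), so equivariance is not what forces splitting --- only the even-codimension fact does; the $\KK^\times$-action is a red herring here. Second, to split the Jordan--H\"older filtration you need $\Ext^1$-vanishing between \emph{all} pairs of composition factors, including self-extensions $\Ext^1\bigl((i_K)_*\Omega_{V^K},(i_K)_*\Omega_{V^K}\bigr)$, not only between distinct ones; these do vanish, by the same K\"unneth argument together with $\Ext^1(\Omega,\Omega)=\Ext^1(\delta,\delta)=0$ on the line, but your argument does not address them.
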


\begin{proof} 
  Define a stratification of $V$ into strata
  $(V^K)^\circ := \{v \in V \mid \Stab(v) = K\} \subseteq V^K$, for
  $K \in \Par(G,V)$.  The rank of $d\phi$ is constant on
  $(V^K)^\circ$ and equal to $\dim(V^K) = \dim(V^K)^\circ$. Thus, it
  follows from the proof of Theorem \ref{holo} that all composition
  factors of $M_\phi(V)$ are of the form $(i_K)_*(\overline{L})$,
  where $\overline{L}$ is the intermediate extension of a local system
  $L$ on $(V^K)^\circ$.

  We claim that $L$ (and hence $\overline{L}$) is necessarily the
  trivial local system. To see this, it suffices to restrict
  $M_\phi(V)$ to the formal neighborhood of $(V^K)^\circ$, which is of
  the form $(V^K)^\circ \times ((V^K)^\perp)_0$, where
  $((V^K)^\perp)_0$ is the formal neighborhood of the origin in
  $(V^K)^\perp$.  Then, the statement reduces to the case $K=G$, with
  $V^K=0$, which is trivial.

  Moreover, we claim that the multiplicity space for this local system
  is naturally identified with $H(K)$. This reduces in the same way to
  the case $K=G$, where the multiplicity space
  $\Hom(M_\phi(V), \delta_0)^* = \mathcal M_\phi(V,0)^*$ equals
  $\OO_V/\{\OO_{V/G}, \OO_V\} = H(G)$ by Proposition \ref{invdi3}.
 
  Finally, let us show that $\Ext^1$ between any two
  $D$-modules of the form $(i_K)_*(\Omega_{V^K})$ is zero. This will imply
that $M_\phi(V)$ is semisimple, and the desired result will follow.

To this end, note that all the vector spaces $V^K$ are symplectic, i.e., 
  even-dimensional. Hence, the vanishing of $\Ext^1$ is a consequence of the following lemma. 

\begin{lemma}
Let $V_1,V_2\subset V$ be two subspaces of a finite-dimensional vector space $V$, and 
$\delta_{V_i}$ be the right delta-function $D$-modules of $V_i$. Then, if 
$\Ext^1(\delta_{V_1},\delta_{V_2})\ne 0$ then either $V_1\subset V_2$ and $\dim V_2/V_1=1$,
or $V_2\subset V_1$ and $\dim V_1/V_2=1$. 
\end{lemma}

\begin{proof}
First, recall that for right $D$-modules on the line, 
one has 
\begin{equation}\label{vani}
\Ext^1(\Omega,\Omega)=\Ext^1(\delta,\delta)=0.
\end{equation}
Next, pick a basis $B=\lbrace{v_1,...,v_n\rbrace}$ of $V$ compatible with $V_1,V_2$. 
For any $j\in [1,n]$ and $i=1,2$, let $M_i^j$ be the $D$-module 
on the line $\Bbb Kv_j$ defined by the rule: $M_i^j=\Omega$ if $v_j\in V_i$ and $M_i^j=\delta$ if not. 
Then we have
$$
\delta_{V_i}=\boxtimes_{j=1}^n M_i^j.
$$
Using this decomposiion, equality (\ref{vani}), and the K\"unneth formula, 
we see that if $\Ext^1(\delta_{V_1},\delta_{V_2})\ne 0$ then $M_1^j=M_2^j$ for all but precisely one $j$, 
i.e. $v_j\in V_1$ if and only if $v_j\in V_2$, except for exactly one $j$. 
Thus, $V_1\subset V_2$ or $V_2\subset V_1$, and the quotient is 1-dimensional, as desired.  
\end{proof}

\end{proof} 

For any parabolic subgroup $K$ in $G$, let $N(K)$ be the normalizer of
$K$ in $G$, and $N^0(K) := N(K)/K$. Let $i_K: V^K/N^0(K)\to V/G$ be
the corresponding closed embedding.

\begin{corollary}\label{quotsin1}
  There is a canonical isomorphism
  \[
  M(V/G) \cong \bigoplus_{K\in
    \Par(G,V)/G}HP_0(\OO_{(V^K)^\perp/K})\otimes
  (i_K)_*(IC(V^K/N^0(K))). 
  \]
\end{corollary}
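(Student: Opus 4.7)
The plan is to derive this from Theorem \ref{quotsin} by applying the $G$-equivariant pushforward $\pi_*^G$, where $\pi: V \to V/G$. By Corollary \ref{pushpois}, $\pi_*^G M_\phi(V) \cong M(V/G)$, so it suffices to transform the decomposition of $M_\phi(V)$ from Theorem \ref{quotsin} under this functor. Since $G$ permutes the summands indexed by $\Par(G,V)$ via conjugation, with stabilizer $N(K)$ on the summand at $K$, one has
\[
M(V/G) \;\cong\; \bigoplus_{[K] \in \Par(G,V)/G} \bigl( \pi_*(H(K) \otimes (i_K)_*\Omega_{V^K}) \bigr)^{N(K)}.
\]

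For each orbit representative $K$, factor $\pi \circ i_K = \bar i_K \circ \pi_K$, where $\pi_K : V^K \twoheadrightarrow V^K/N^0(K)$ is the quotient by $N^0(K) = N(K)/K$ and $\bar i_K : V^K/N^0(K) \hookrightarrow V/G$ is the induced closed embedding. Then $\pi_*(i_K)_*\Omega_{V^K} \cong (\bar i_K)_*(\pi_K)_*\Omega_{V^K}$, and since $K$ acts trivially on $V^K$, the $N(K)$-action on this pushforward factors through $N^0(K)$. Taking $K$-invariants therefore collapses the multiplicity to $H(K)^K$, and (writing $\OO = \OO_{(V^K)^\perp}$) I identify this with $HP_0(\OO_{(V^K)^\perp/K})$ via the averaging identity $\{\OO^K, \OO\}^K = \{\OO^K, \OO^K\}$, which holds because $\tfrac{1}{|K|}\sum_{k \in K} k \cdot \{f, g\} = \{f, \mathrm{av}_K(g)\}$ for $f \in \OO^K$. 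Taking further $N^0(K)$-invariants uses the standard identification $((\pi_K)_*\Omega_{V^K})^{N^0(K)} \cong IC(V^K/N^0(K))$, which follows from Lemma \ref{push} on the smooth locus together with the fact that the pushforward of $\Omega$ under a finite surjective map is semisimple, so that the trivial isotypic part equals the intermediate extension of its smooth-locus restriction.

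The main obstacle is the bookkeeping of the simultaneous $N^0(K)$-actions on $HP_0(\OO_{(V^K)^\perp/K})$ and on $(\pi_K)_*\Omega_{V^K}$: a naive tensor-product of invariants would pair non-trivial isotypic components of the former with twisted local systems on $V^K/N^0(K)$. To reach the clean formula in the corollary, I would cross-check via Proposition \ref{rankls}, which identifies the local system $L_{S_K}$ on the symplectic leaf $S_K = (V^K)^\circ / N^0(K)$ of $V/G$ with fiber $HP_0(\OO_{(V^K)^\perp/K})$; combined with the semisimplicity of $M_\phi(V)$ from Theorem \ref{quotsin}, which transfers through $\pi_*^G$ to semisimplicity of $M(V/G)$, this pins down the composition of $M(V/G)$ and rules out spurious twisted summands. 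I expect the delicate point to be producing a canonical isomorphism rather than merely a Jordan-H\"older identification.
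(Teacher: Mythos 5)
Your overall route is the paper's own: the proof given in the paper is precisely the two-line argument you begin with, namely apply $\pi^G_*$ to Theorem \ref{quotsin}, identify $\pi^G_*M_\phi(V)$ with $M(V/G)$ by Corollary \ref{pushpois}, group the summands into $G$-orbits (stabilizer $N(K)$), and invoke $\pi^G_*(\Omega_X)=IC(X/G)$ for a smooth $G$-variety $X$. Your reduction of the orbit sum to $\bigl(H(K)\otimes\pi_*(i_K)_*\Omega_{V^K}\bigr)^{N(K)}$ and the averaging identity giving $H(K)^K\cong HP_0(\OO_{(V^K)^\perp/K})$ (which is the last assertion of Corollary \ref{co2}) are correct and are implicit in the paper.

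The gap is in your last step, and to your credit you have isolated exactly the point on which the paper's proof is silent: the $N^0(K)$-invariants of $HP_0(\OO_{(V^K)^\perp/K})\otimes(\pi_K)_*\Omega_{V^K}$ are not the tensor product of the two spaces of invariants, and the cross-check you propose cannot ``rule out spurious twisted summands'' --- it produces them. Proposition \ref{rankls} identifies the local system $L_S$ on the leaf $S=(V^K)^\circ/N^0(K)$ with the bundle of slice invariants $HP_0(\OO_{(V^K)^\perp/K})$ \emph{together with its canonical flat connection} (Proposition \ref{flatconn}), and the monodromy of this connection, computed through the covering $(V^K)^\circ\to S$, is precisely the $N^0(K)$-action on $HP_0(\OO_{(V^K)^\perp/K})$. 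So whenever that action is nontrivial, $M(V/G)$ genuinely contains intermediate extensions of nontrivial local systems on $S$ (pairing the nontrivial isotypic parts of $HP_0$ with the corresponding isotypic parts of $(\pi_K)_*\Omega_{V^K}$), and the displayed formula, which has only the trivial $IC(V^K/N^0(K))$ with the full multiplicity space, cannot be reached by your argument. This situation does occur: take $V=\KK^2\oplus\KK^2$ and $G$ of order $12$ generated by $\sigma$ acting on the first summand by $\mathrm{diag}(\omega,\omega^{-1})$, $\omega^3=1$, and trivially on the second, and $\tau$ acting on the first summand by $x\mapsto y$, $y\mapsto -x$ and on the second by $-\id$; then $K=\langle\sigma,\tau^2\rangle\cong\ZZ/6$ is parabolic with $V^K=0\oplus\KK^2$, $N^0(K)\cong\ZZ/2$ is generated by the image of $\tau$, and $\tau$ acts on the $5$-dimensional space $HP_0(\OO_{(V^K)^\perp/K})$ (the $A_5$ singularity, spanned by the classes of $(xy)^k$, $0\le k\le 4$) by $(xy)^k\mapsto(-1)^k(xy)^k$, which is nontrivial. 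Consequently, to prove the statement as written one needs the additional input that $N^0(K)$ acts trivially on each $HP_0(\OO_{(V^K)^\perp/K})$, which you do not establish and which fails in general; the form of the $K$-summand that your argument (and the paper's) actually yields is the equivariant pushforward $(i_K)_*\bigl((\pi_K)^{N^0(K)}_*(HP_0(\OO_{(V^K)^\perp/K})\otimes\Omega_{V^K})\bigr)$, i.e. exactly the bookkeeping that the nonlinear statement, Theorem \ref{quotsin3}, builds into $H_Z$.
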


\begin{proof} 
  By definition, $M(V/G) = \phi^G_* M_\phi(V)$. Thus, the corollary
  follows from Theorem \ref{quotsin} and Corollary \ref{pushpois},
  using that $\pi_*^G(\Omega_X)=IC(X/G)$ for every smooth
  $G$-variety $X$.
\end{proof}

\subsection{Generalization to the non-linear case}

We now generalize the result of the previous subsection to the case
when $V$ is a smooth connected symplectic variety with a faithful
action of a finite group $G$.

Let $T$ be a symplectic representation of a finite group $K$.  Denote
by $H(T)$ the $K$-representation
\[
H(T):=\OO_T/\lbrace{\OO_{T/K},\OO_T\rbrace}.
\]
By \cite[\S 7]{BEG} (or our Corollary \ref{co2} with $X=T$), $H(T)$ is
finite-dimensional.

Now let $E$ be a symplectic vector bundle on a connected variety $Y$.
Assume that $E$ carries a fiberwise faithful symplectic action of a
finite group $K$.  In this case, for every $y\in Y$, we can define the
vector space $H(E_y)$ as above. All these spaces are isomorphic, as
$K$-modules, to $H(T)$, where $T$ is a certain symplectic
representation of $K$. Since $H(T)$ is finite-dimensional, $H(E)$ is
an algebraic vector bundle on $Y$.

\begin{proposition}\label{flatco}
  The bundle $H(E)$ carries a canonical flat algebraic connection
  $\nabla$.  This connection defines an $\OO$-coherent $D$-module with
  regular singularities.
\end{proposition}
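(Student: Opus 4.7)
The plan is to realize $(H(E),\nabla)$ as a local system on $Y$ with \emph{finite} monodromy; flatness and regular singularities will then follow automatically. First, fix a model fiber $T$; since $E$ is a symplectic $K$-equivariant vector bundle, $E$ is a principal bundle for the algebraic group $G := C_{\operatorname{Sp}(T)}(K)$, the centralizer of $K$ in the symplectic group of $T$. Because $G$ commutes with $K$ and preserves the symplectic form, it acts on $T$ by Poisson automorphisms preserving $\OO_{T/K}$, and therefore acts on the finite-dimensional space $H(T) = \OO_T/\{\OO_{T/K},\OO_T\}$ through a representation $\rho: G \to GL(H(T))$, making $H(E)$ the associated vector bundle.

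The key step, and the main technical point of the proof, will be to show that $\operatorname{Lie}(G)$ acts trivially on $H(T)$. Under the standard isomorphism $\operatorname{Lie}(\operatorname{Sp}(T)) \cong \operatorname{Sym}^2(T^*)$, which sends $X$ to $f_X(v) := \tfrac{1}{2}\omega(Xv,v)$ with $X$ acting on $T$ as the Hamiltonian vector field $\xi_{f_X}$, the subalgebra $\operatorname{Lie}(G)$ corresponds to $\operatorname{Sym}^2(T^*)^K \subset \OO_{T/K}$. For any $f \in \OO_{T/K}$ and $g \in \OO_T$ one has $\{f,g\} \in \{\OO_{T/K},\OO_T\}$, so $\xi_f$ acts by zero on $H(T)$. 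Hence $d\rho = 0$, and since $G$ is an algebraic group, this will force $\rho$ to factor through the finite component group $\pi_0(G) = G/G^0$.

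From here the rest is formal. The bundle $H(E)$ will be associated, via a representation of the finite group $\pi_0(G)$, to the finite \'etale cover $E/G^0 \to Y$, and I would construct the canonical flat algebraic connection $\nabla$ as follows: \'etale-locally on $Y$, trivialize $E$ and equip $H(E)$ with the corresponding trivial connection; two such trivializations over a connected \'etale neighborhood differ by an algebraic map $\sigma$ into $G$, and $\rho \circ \sigma$, being an algebraic map to the finite set $\rho(\pi_0(G))$, must be constant, so the trivial connections glue consistently. Flatness follows at once from local triviality, and regular singularities come for free: $(H(E),\nabla)$ pulls back to a trivial local system on the finite \'etale cover $E/G^0 \to Y$, and regularity descends along finite \'etale morphisms (as $M$ is a summand of $\pi_*\pi^*M$ for $\pi$ finite \'etale). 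The main obstacle in executing this plan is the Lie-algebra computation; everything else exploits only that $H(T)$ is annihilated by Hamiltonians of $K$-invariant functions.
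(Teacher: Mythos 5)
Your proposal is correct, and its engine is exactly the paper's: the identity component of $L=Sp(T)^K$ acts trivially on $H(T)$ because $\mathrm{Lie}(L)\cong \mathrm{Sym}^2(T^*)^K\subset \OO_{T/K}$ acts through Hamiltonian vector fields of $K$-invariant functions, which kill $\OO_T/\{\OO_{T/K},\OO_T\}$ (this is the paper's assertion that $\mathrm{Lie}(L_0)\subset\OO_{E_{y_0}}^K$, which you spell out rather than assert). Where you differ is in the packaging: the paper works over $\KK=\CC$, defines parallel transport along paths by choosing a continuous family of $K$-module symplectic isomorphisms of fibers, uses the Lie algebra fact to see the transport is independent of the choice, then remarks that the infinitesimal version algebraizes and that regularity holds ``as its sections are algebraic''; you instead reduce the structure group, viewing $H(E)$ as associated to the bundle of $K$-equivariant symplectic frames (note: it is this frame bundle, not the vector bundle $E$ itself, that is a $G$-torsor for $G=Sp(T)^K$ -- a small conflation in your write-up), observe $d\rho=0$ forces $\rho$ to factor through $\pi_0(G)$, glue trivial connections over an \'etale trivializing cover, and deduce regular singularities by finite \'etale descent from the $\pi_0(G)$-cover. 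What your route buys is a construction that is algebraic over any $\KK$ of characteristic zero from the start and, more substantively, an actual argument for the regular-singularities claim, which the paper only asserts; what the paper's route buys is the explicit topological description of the monodromy as $\pi_1(Y,y_0)\to \pi_0(L)\to GL(H(T))$, which it exploits immediately afterwards via Riemann--Hilbert and Proposition \ref{sw} (your finite-monodromy description recovers the same data). Both arguments implicitly use that all fibers $E_y$ are isomorphic to a fixed symplectic $K$-representation $T$, which holds since $Y$ is connected and the isomorphism type is locally constant.
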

 
\begin{proof}
  Let us first explain what $\nabla$ is in topological terms, assuming
  $\mathbb K=\mathbb C$. If $y_0$ and $y_1$ are any two points of $Y$ and
  $y_t$, for $t\in [0,1]$, is any path from $y_0$ to $y_1$, pick a
  continuous family $g(t)$ of isomorphisms of $K$-modules
  $E_{y_0}\to E_{y_t}$, with $g(0)=\operatorname{Id}$, and define the
  transport of $v\in H(E_{y_0})$ along $y_t$ to be $g(1)v$.

  We claim that this is well-defined. Indeed, if $g_1(t)$ and $g_2(t)$
  are two such families, then $b(t):=g_1(t)^{-1}g_2(t)$ belongs to the
  group $L=Sp(E_{y_0})^K$, and hence to the connected component $L_0$
  of the identity in this group. However,
  $\mathrm{Lie}(L_0)\subset \OO_{E_{y_0}}^K$, so $b(t)v=v$, and thus
  $g(1)v$ does not depend on the choice of $g$.

  If $y_0$ and $y_1$ are infinitesimally close, this makes sense
  algebraically, and defines a flat algebraic connection $\nabla$ on
  $H(E)$.  It is easy to see that this connection has regular
  singularities, as its sections are algebraic.
\end{proof}

By the Riemann-Hilbert correspondence, if $\mathbb K=\mathbb C$, the
algebraic $D$-module $(H(E),\nabla)$ is determined by the monodromy of
the connection $\nabla$. This monodromy is the composition of the maps
$\rho: \pi_1(Y,y_0)\to \pi_0(L)=L/L_0$ and
$\theta: L/L_0\to GL(H(E_{y_0}))=GL(H(T))$. The map $\theta$ depends
only on the isomorphism class of the representation $T$.

Let us describe $\rho$.  Let $R_r, R_q$, and $R_c$ be the sets of
irreducible (complex) representations of $K$ of real, quaternionic,
and complex type modulo dualization, respectively. These correspond to the irreducible symplectic representations of $K$, via tensoring with the symplectic
vector space $\KK^2$, equipping with the canonical symplectic form, and sending a pair of nonisomorphic dual representations $Q, Q^*$ to the
space $Q \oplus Q^*$ with the standard symplectic pairing, respectively.  Let
$T_Q:=\Hom_{K}(Q,T)$ (over $\KK$, for an arbitrary representation $Q$). Then,
\[
L=\prod_{Q\in R_r}Sp(T_Q)\times \prod_{Q\in R_q}O(T_Q)\times
\prod_{Q\in \overline{R_c}}GL(T_Q),
\]
and hence,
\[
L/L_0=\prod_{Q\in R_q: T_Q\ne 0}\mathbb Z_2.
\]

Now, for any quaternionic representation $Q$ of $K$ which occurs in
$T$, $E_Q=\Hom_K(Q,E)$ is an orthogonal vector bundle on
$Y$.

\begin{proposition}
  \label{sw} The $Q$-th coordinate of $\rho$ is the first
  Stiefel-Whitney class
  $w_1(E_Q)\in H^1(Y,\mathbb Z_2)=\Hom(\pi_1(Y,y_0),\mathbb Z_2)$.
\end{proposition}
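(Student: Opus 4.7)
The plan is to decompose the monodromy $\rho(\gamma)$ for each loop $\gamma\in\pi_1(Y,y_0)$ into its factor in $O(T_Q)/SO(T_Q)=\mathbb{Z}_2$ coming from a quaternionic $Q$, and to recognize this factor as the monodromy of the orientation double cover of the orthogonal bundle $E_Q$, which by definition computes $w_1(E_Q)(\gamma)$.

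First, I would spell out the orthogonal structure on $E_Q$: since $Q$ is quaternionic, it carries a nondegenerate $K$-invariant skew-symmetric pairing $\omega_Q$, and combined with the symplectic form $\omega_E$ on $E$, this induces a nondegenerate \emph{symmetric} pairing on $E_Q=\Hom_K(Q,E)$ (the tensor product of two skew forms is symmetric), which is exactly the orthogonal structure. For any symplectic $K$-equivariant isomorphism $g\colon E_y\to E_{y'}$, the induced map $g_Q:=\Hom_K(Q,g)\colon E_{Q,y}\to E_{Q,y'}$ preserves these symmetric pairings and is therefore orthogonal; under the product decomposition of $L$ recalled before the proposition, the element $g\in L$ projects to $g_Q$ on the $Q$-factor.

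Next, for a loop $\gamma\colon[0,1]\to Y$ based at $y_0$, I would pick a continuous family $g(t)\colon E_{y_0}\to E_{\gamma(t)}$ of symplectic $K$-equivariant isomorphisms with $g(0)=\id$ (used already to define $\rho$ in Proposition \ref{flatco}). Then $t\mapsto g(t)_Q$ is a continuous family of orthogonal isomorphisms $E_{Q,y_0}\to E_{Q,\gamma(t)}$ with $g(0)_Q=\id$, and by the very definition of $\rho$ applied to the $Q$-factor of $L$, the $Q$-component of $\rho(\gamma)\in L/L_0$ is the class of $g(1)_Q\in O(E_{Q,y_0})$ in $\pi_0(O(E_{Q,y_0}))=\mathbb{Z}_2$, i.e., the sign of $\det g(1)_Q$.

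Finally, I would identify this $\mathbb{Z}_2$-valued monodromy with $w_1(E_Q)(\gamma)$. By definition, $w_1$ of a complex orthogonal bundle classifies the double cover $BSO\to BO$; equivalently, evaluated on a loop, it is the obstruction to lifting a continuous orthogonal trivialization along the loop to an oriented one, which is precisely the sign of the determinant of the resulting self-isomorphism of the fiber at the endpoint. Since $g(t)_Q$ provides exactly such an orthogonal trivialization along $\gamma$, we obtain $\rho(\gamma)_Q=\operatorname{sgn}\det g(1)_Q=w_1(E_Q)(\gamma)$. The main point to verify carefully is that the $Q$-factor of the $K$-equivariant symplectic monodromy genuinely matches the orthogonal monodromy of $E_Q$; this is the step where signs and conventions could cause trouble, but it follows from functoriality of $\Hom_K(Q,-)$ and the standard interpretation of $w_1$ as an orientation obstruction, so no serious obstacle is anticipated.
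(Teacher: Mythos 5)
Your argument is correct and is exactly what the paper intends: the paper's proof is simply the remark that the statement ``follows immediately from the definition of the first Stiefel-Whitney class,'' i.e., the $Q$-component of $\rho(\gamma)$ is the class of $g(1)_Q$ in $\pi_0(O(E_{Q,y_0}))=\mathbb Z_2$, which is precisely $w_1(E_Q)$ evaluated on $\gamma$ as the orientation obstruction. You have merely filled in the details (the induced orthogonal structure on $E_Q$ and the functoriality of $\Hom_K(Q,-)$) that the paper leaves implicit, so there is nothing further to add.
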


\begin{proof} 
  This follows immediately from the definition of the first
  Stiefel-Whitney class.
\end{proof}

Now consider $V,G$ as above, and $K\in \Par(G,V)$.  Note that the
connected components of the fixed point variety $V^K$ are
symplectic. Denote by $C_K$ the set of connected components of $V^K$.
For each $Z\in C_K$, let $i_Z: Z\to V$ be the corresponding closed
embedding.

Then, we obtain the following nonlinear generalization of Theorem
\ref{quotsin}, whose proof is parallel to the linear case and omitted.
\begin{theorem}\label{quotsin2}
  There is a canonical $G$-equivariant isomorphism
  \[
  M_\phi(V) \cong \bigoplus_{K\in \Par(G,V)}\bigoplus_{Z\in
    C_K}(i_Z)_*(H(TV|_{Z}/TZ)).
  \]
\end{theorem}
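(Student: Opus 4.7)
The plan is to mimic the proof of Theorem \ref{quotsin} step by step, replacing the linear coordinates by local models coming from an equivariant symplectic tubular neighborhood theorem, and replacing the trivial multiplicity local systems by the flat bundles $H(TV|_Z/TZ)$ constructed in Proposition \ref{flatco}.

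First, stratify $V$ by setting, for each $K \in \Par(G,V)$ and each connected component $Z \in C_K$, the locally closed subvariety $Z^\circ := \{v \in Z : \Stab_G(v) = K\}$. On $Z^\circ$, the rank of $d\phi$ is constant and equal to $\dim Z$: indeed, the tangent space to the $G$-orbit through $v$ is contained in $T_v V$ and intersects $T_v Z$ trivially in directions along which $K$ acts without fixed vectors, so by the usual computation the fibers of $\phi$ transverse to $Z$ are the $K$-orbits on the normal slice $(T_vV/T_vZ)$. Thus Theorem \ref{holo} applies, and its proof shows that the singular support of $M_\phi(V)$ is contained in the union of the conormal bundles to the $Z^\circ$, so every composition factor of $M_\phi(V)$ has the form $(i_Z)_*(\overline{L})$ for $\overline{L}$ an intermediate extension of an irreducible local system on some $Z^\circ$.

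To identify these local systems and their multiplicities, I would use the formal (or étale-local) equivariant Darboux-Weinstein theorem: near a point $z \in Z^\circ$, the pair $(V,G)$ is Poisson-isomorphic to the product of $Z$ with the symplectic normal slice $T_zV/T_zZ$, equipped with its fiberwise faithful $K$-action. In this local picture, $\phi$ is (étale-locally) the product of $\id_Z$ with the linear symplectic quotient map for $K$ on $T_zV/T_zZ$, so by Theorem \ref{quotsin} and the product structure $M_\phi(V)$ looks locally like $\Omega_Z \boxtimes \bigoplus_{K' \subseteq K} H(K') \otimes (i_{K'})_*\Omega$. Only the summand with $K'=K$ lies generically on $Z^\circ$; its multiplicity space is $H(K) = H(T_zV/T_zZ)$, and the other summands account for deeper strata. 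The pointwise multiplicity spaces assemble into the bundle $H(TV|_Z/TZ)$, and the canonical connection $\nabla$ from Proposition \ref{flatco} matches the connection induced by the $D$-module structure, because $\nabla$ was defined precisely by parallel transport inside the Hamiltonian-invariant quotient. This gives the subquotient $(i_Z)_*(H(TV|_Z/TZ))$.

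It remains to establish that $M_\phi(V)$ is actually the direct sum of these summands, not merely an iterated extension. This is the main obstacle: one needs the analogue of the $\Ext^1$-vanishing lemma from the proof of Theorem \ref{quotsin}. The argument in the linear case reduced to a K\"unneth computation and the parity of dimensions of symplectic subspaces. In our setting, all components $Z$ are symplectic submanifolds of $V$, hence even-dimensional, and for two such closed embeddings $i_Z, i_{Z'}$ the $\Ext^1$ between $(i_Z)_*\Omega_Z$ and $(i_{Z'})_*\Omega_{Z'}$ (even twisted by our local systems) vanishes unless $\dim Z$ and $\dim Z'$ differ by one, which is impossible by the parity constraint. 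This reduction can be carried out étale-locally on the base, where the situation becomes the linear one, and then globalized using that the obstruction classes live in $H^0$ of a vanishing sheaf; once the splitting is established locally, it extends uniquely because the summands have disjoint singular supports. Combining this semisimplicity with the identification of subquotients above yields the claimed decomposition.
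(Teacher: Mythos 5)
Your proposal is essentially the argument the paper intends: Theorem \ref{quotsin2} is stated with its proof omitted as ``parallel to the linear case,'' and your outline --- stratification by stabilizer type, formal equivariant Darboux--Weinstein reduction to the linear Theorem \ref{quotsin}, identification of the multiplicity bundle with $H(TV|_Z/TZ)$ together with the canonical flat connection of Proposition \ref{flatco}, and an evenness-based $\Ext^1$-vanishing across strata to obtain the splitting --- is exactly that parallel argument. Only your final globalization is loosely phrased: the summands do not have disjoint singular supports, and the correct justification is that for distinct components $Z\neq Z'$ the local $\mathcal{E}xt$-sheaves between $(i_Z)_*L$ and $(i_{Z'})_*L'$ are concentrated in degree $\dim Z+\dim Z'-2\dim(Z\cap Z')\ge 2$ (cleanness of the intersection of fixed loci plus evenness of all dimensions), so both $\mathcal{E}xt^0$ and $\mathcal{E}xt^1$ vanish and the local-to-global spectral sequence kills the global $\Ext^1$ --- a repair that uses precisely the local Künneth computation you already set up.
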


\begin{corollary}\label{poishom}
  Let $d_Z$ be the dimension of $Z$. Then,
  \[
  \OO_V/\lbrace{\OO_{V/G},\OO_V\rbrace}=
  \bigoplus_{K\in \Par(G,V)}\bigoplus_{Z\in C_K}
  H^{d_Z}(Z,H(TV|_{Z}/TZ)),
  \]
  and hence
  \[
  HP_0(\OO_{V/G})=\Bigl(\bigoplus_{K\in \Par(G,V)}\, \bigoplus_{Z\in
    C_K} H^{d_Z}(Z,H(TV|_{Z}/TZ))\Bigr)^G.
  \]
\end{corollary}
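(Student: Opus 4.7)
The plan is to obtain Corollary \ref{poishom} by applying the underived direct image functor $p_0$ (for $p\colon V\to \mathrm{pt}$) to the decomposition of $M_\phi(V)$ provided by Theorem \ref{quotsin2}. By Proposition \ref{hp0}, the left-hand side is canonically identified with $\OO_V/\{\OO_{V/G},\OO_V\}$. For each $K\in\Par(G,V)$ and $Z\in C_K$, writing $q_Z\colon Z\to\mathrm{pt}$, functoriality of direct image under the closed embedding $i_Z$ gives
\[
p_0\bigl((i_Z)_*H(TV|_Z/TZ)\bigr)=(q_Z)_0\bigl(H(TV|_Z/TZ)\bigr),
\]
so the task reduces to computing $(q_Z)_0$ of each summand.

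The next step is to identify $(q_Z)_0$ of $H(TV|_Z/TZ)$ with its top de Rham cohomology. The key input is the standard fact that, for any $\OO$-coherent right $D$-module $N$ on a smooth variety $Z$ of dimension $d_Z$, the underived direct image to a point equals $H^{d_Z}(Z,N)$: the de Rham complex computes the full (derived) direct image, and the cokernel of the top differential is precisely the $D$-module coinvariants $N\otimes_{D_Z}\OO_Z=(q_Z)_0(N)$. In our setting, $H(TV|_Z/TZ)$ is a flat bundle with connection by Proposition \ref{flatco}, and because $Z$ is symplectic its canonical sheaf is canonically trivial, so this bundle is unambiguously a right $D$-module (per the remark preceding Proposition \ref{flatconn}). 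Summing over $K$ and $Z\in C_K$ then yields the first identity of the corollary.

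For the second identity, Corollary \ref{pushpois} gives $M(V/G)=\pi_*^G M_\phi(V)$. Applying $p_0$ and using that, in characteristic zero, the exact functor of $G$-invariants (for the finite group $G$) commutes with the underived direct image, we obtain
\[
HP_0(\OO_{V/G}) \;=\; p_0 M(V/G) \;=\; \bigl(p_0 M_\phi(V)\bigr)^G \;=\; \bigl(\OO_V/\{\OO_{V/G},\OO_V\}\bigr)^G,
\]
which, combined with the first identity, produces the $G$-invariant formula. I do not expect any serious obstacle here, since the substantive content of the corollary has already been absorbed into Theorem \ref{quotsin2}; what remains is a clean direct-image computation, the only bookkeeping being the cohomological degree $d_Z$ appearing in each summand and the left/right $D$-module conversion on the symplectic leaves $Z$.
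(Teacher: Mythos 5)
Your proposal is correct and is exactly the argument the paper intends (the corollary is stated without proof as an immediate consequence of Theorem \ref{quotsin2}): apply the underived direct image to the decomposition, identify the left side via Proposition \ref{hp0}, compute $p_0$ of each $(i_Z)_*H(TV|_Z/TZ)$ as the top de Rham cohomology $H^{d_Z}(Z,H(TV|_Z/TZ))$ of the flat bundle (regular singularities, Proposition \ref{flatco}, ensure this is cohomology of the local system), and pass to $G$-invariants via Corollary \ref{pushpois} (or the identification $HP_0(\OO_{V/G})=(\OO_V/\{\OO_{V/G},\OO_V\})^G$ already used in Corollary \ref{co2}).
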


We also formulate a nonlinear generalization of Corollary
\ref{quotsin1}, whose proof (which we omit) is parallel to the linear
case.  To do so, let $N(K)$ be the normalizer of $K$ in $G$, and for
each $Z\in C_K$, denote by $N_Z(K)$ the subgroup of elements of $N(K)$
that map $Z$ to itself. Let $N_Z^0(K):=N_Z(K)/K$.  Let
$\pi_Z: Z\to Z/N_Z^0(K)$ be the corresponding projection and
$i_Z: Z/N_Z^0(K)\to V/G$ be the corresponding closed embedding.  Let
$H_Z=(\pi_Z)_*^{N_Z^0(K)}(H(TV|_{Z}/TZ))$ be the equivariant
pushforward of the $N_Z^0(K)$-equivariant local system $H(TV|_{Z}/TZ)$
from $Z$ to $Z/N_Z^0(K)$.

\begin{theorem}\label{quotsin3}
  There is a canonical isomorphism
  \[
  M(V/G)\cong\bigoplus_{K\in \Par(G,V)/G}\bigoplus_{Z\in
    C_K/N(K)}(i_Z)_*(H_Z).
  \]
\end{theorem}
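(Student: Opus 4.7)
The plan is to derive Theorem \ref{quotsin3} from Theorem \ref{quotsin2} together with Corollary \ref{pushpois}, in a manner strictly parallel to the derivation of Corollary \ref{quotsin1} from Theorem \ref{quotsin}. First, Corollary \ref{pushpois} gives $M(V/G) = \pi_*^G M_\phi(V)$, where $\pi: V \to V/G$ is the quotient (so $\phi=\pi$). By Theorem \ref{quotsin2}, we have the $G$-equivariant identification
\[
M_\phi(V) \cong \bigoplus_{K\in \Par(G,V)} \bigoplus_{Z\in C_K} (i_Z)_*\bigl(H(TV|_Z/TZ)\bigr),
\]
and the $G$-action permutes the indexing pairs by $g\cdot (K,Z) = (gKg^{-1}, gZ)$, with stabilizer $N_Z(K)$. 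Hence the orbit set is naturally in bijection with $\{(K,[Z]) : K\in \Par(G,V)/G,\ [Z]\in C_K/N(K)\}$.

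Next, I would rewrite the direct sum as a sum over $G$-orbits. For each orbit representative $(K,Z)$, the subsum over the orbit is the $G$-equivariant direct image (along the inclusion of the $G$-saturation of $Z$) of the $N_Z(K)$-equivariant module $(i_Z)_*(H(TV|_Z/TZ))$. Using the standard identification between the $G$-equivariant pushforward of an induced module and the $N_Z(K)$-equivariant pushforward of the original (a routine variant of the argument underlying Lemma \ref{push}), we reduce to computing, for each representative $(K,Z)$, the $N_Z(K)$-equivariant pushforward of $(i_Z)_*(H(TV|_Z/TZ))$ along the composition $\pi\circ i_Z: Z \hookrightarrow V \to V/G$.

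Since $K$ fixes $Z$ pointwise, this composition factors as $Z \xrightarrow{\pi_Z} Z/N_Z^0(K) \xrightarrow{i_Z} V/G$, where the last arrow is the closed embedding of the theorem statement. Consequently the $N_Z(K)$-equivariant pushforward along $\pi\circ i_Z$ equals $(i_Z)_*\circ (\pi_Z)_*^{N_Z^0(K)}$ applied to $H(TV|_Z/TZ)$, viewed as an $N_Z^0(K)$-equivariant local system (the residual $K$-action on fibers of the symplectic normal bundle is absorbed into the definition of $H$ on each fiber, as in the construction before Proposition \ref{flatco}). By definition, this expression is $(i_Z)_*(H_Z)$, and assembling the contributions from all orbits yields the claimed formula.

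The main bookkeeping obstacle is the descent of $N_Z(K)$-equivariance to $N_Z^0(K)$-equivariance in a manner compatible with the nontrivial fiberwise $K$-action on the symplectic normal bundle, together with confirming that distinct $G$-orbits $(K,Z)$ contribute distinct summands after pushforward. The second point is automatic from the fact that the decomposition in Theorem \ref{quotsin2} is already into nonisomorphic irreducible pieces (intermediate extensions from the constant-rank loci of $d\phi$), so semisimplicity is preserved by the exact functor of $G$-invariants; the first point is a formal check using that $K$ acts trivially on the base $Z$.
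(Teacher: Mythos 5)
Your proposal is correct and takes essentially the same approach as the paper, which omits the proof precisely because it is the one you give: apply the equivariant pushforward $\pi_*^G$ of Corollary \ref{pushpois} to the $G$-equivariant decomposition of Theorem \ref{quotsin2} and carry out the orbit bookkeeping over pairs $(K,Z)$, exactly as Corollary \ref{quotsin1} is deduced from Theorem \ref{quotsin} in the linear case.
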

The proof is similar to that of Theorem \ref{quotsin1} and is omitted.

\subsection{The $C^\infty$ case}

Corollary \ref{poishom} can be generalized to the complex analytic and
$C^\infty$-settings.  Moreover, in the $C^\infty$-setting, the
statement simplifies due to the following lemma from elementary
representation theory.

\begin{lemma}\label{tri}
  Let $T$ be a real symplectic representation of a finite group $K$.
  Then, the centralizer $L:=Sp(T)^K$ is connected.
\end{lemma}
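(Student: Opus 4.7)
The plan is to reduce connectedness of $L$ to connectedness of a maximal compact subgroup via the Cartan (polar) decomposition, and to exhibit that subgroup explicitly as a product of compact unitary groups, which are individually connected.

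First, by averaging, choose a $K$-invariant positive-definite inner product $g$ on $T$. Together with the symplectic form $\omega$, this determines a $K$-invariant compatible complex structure $J$ on $T$, characterized by $\omega(v,w) = g(Jv,w)$, along with a positive-definite $K$-invariant Hermitian form $h(v,w) = g(v,w) + i\omega(v,w)$. Thus $(T,J,h)$ becomes a complex Hermitian $K$-representation.

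Second, $U(T,h)$ is a maximal compact subgroup of $Sp(T,\mathbb{R})$, and is stable under $K$-conjugation since $h$ is $K$-invariant. Hence $U(T,h)^K$ is a maximal compact subgroup of $L$. Decomposing $(T,J)$ into its complex irreducible $K$-isotypic components as $T = \bigoplus_Q V_Q \otimes W_Q$, one checks by averaging the isotypic projectors that distinct isotypic components are $h$-orthogonal; then Schur's lemma, together with the uniqueness up to positive scalar of a $K$-invariant positive-definite Hermitian form on each irreducible $V_Q$, yields
\[
U(T,h)^K \;\cong\; \prod_Q U(W_Q, h_{W_Q}),
\]
where $h_{W_Q}$ is the induced positive-definite Hermitian form on the multiplicity space $W_Q$. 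Each factor is a compact unitary group $U(n_Q)$, hence connected, so $U(T,h)^K$ is connected.

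Finally, the Cartan decomposition $Sp(T,\mathbb{R}) = U(T,h) \cdot \exp(\mathfrak{p})$, with $\mathfrak{p}$ the $-1$-eigenspace of the Cartan involution $g \mapsto (g^{-1})^*$ relative to $h$, is preserved by $K$-conjugation since $K$ acts by unitaries. Taking $K$-invariants yields $L = U(T,h)^K \cdot \exp(\mathfrak{p}^K)$, with $\exp(\mathfrak{p}^K)$ contractible, so $L$ deformation retracts onto $U(T,h)^K$ and therefore has the same number of connected components. The main subtlety is verifying that the polar decomposition descends cleanly to $K$-invariants, which is automatic from the $K$-invariance of $h$; granted this, $L$ is connected.
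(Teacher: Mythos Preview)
Your argument is correct and takes a somewhat different route from the paper's. The paper decomposes $T$ as a \emph{real} $K$-representation by the type (real, complex, quaternionic) of its real irreducible constituents, and thereby computes $L$ itself as a product of classical groups $Sp(d_Q)$, $U(p_Q,q_Q)$, $O^*(2d_Q)$; it then observes that the maximal compact subgroup of each factor is a unitary group, hence connected. You instead fix a $K$-invariant compatible complex structure at the outset, view $T$ as a unitary \emph{complex} $K$-representation, and directly identify the maximal compact $U(T,h)^K$ of $L$ as a product of compact unitary groups indexed by complex irreducibles of $K$, bypassing the type-by-type case analysis entirely. Your approach is more economical; the paper's has the compensating advantage of making the full isomorphism type of $L$ (not just its maximal compact) explicit. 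One small wording issue: the operator determined by $\omega(v,w)=g(Jv,w)$ is not literally a complex structure unless $g$ is already compatible with $\omega$; you mean the standard polar-decomposition construction of a compatible $J$ from the pair $(\omega,g)$, which is canonical and hence $K$-invariant. This does not affect the validity of the proof.
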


\begin{proof} 
  Let $R_r^{\mathrm{re}}, R_c^{\mathrm{re}}$, and $R_q^{\mathrm{re}}$
  be the sets of irreducible real representations of $K$ of real,
  complex, and quaternionic type, respectively (i.e., with
  endomorphism algebras $\mathbb R$, $\mathbb C$, and $\mathbb H$).  That is,
  $Q\in R_r^{\mathrm{re}}$ if and only if
  $Q_{\mathbb C} := Q \otimes_{\mathbb R} \mathbb C$ is irreducible,
  $Q\in R_c^{\mathrm{re}}$ if and only if $Q_{\mathbb C}$ is the sum of
  two non-isomorphic complex representations of $K$ which are dual to
  each other, and $Q\in R_q^{\mathrm{re}}$ if and only if $Q_{\mathbb C}$
  is the direct sum of two copies of an irreducible complex
  representation of $K$.

  Let $T_Q:=\Hom_K(Q,T)$. Then, for $Q$ of real, complex, or
  quaternionic type, $T_Q$ is a vector space over $\mathbb R,\mathbb C$, or
  $\mathbb H$, respectively, which has a natural nondegenerate
  skew-Hermitian form with values in the corresponding division
  algebra.  Moreover, the group $Sp(T)^K$ is the product over $Q$ of
  the groups of linear transformations of $T_Q$ (over $\mathbb R$,
  $\mathbb C$, or $\mathbb H$) which preserve the skew-Hermitian form.

  Let the dimension of $T_Q$ over the corresponding division algebra
  be $d=d_Q$.  Then, the group of symmetries of the skew-Hermitian
  form is $Sp(d)$ in the real case, $U(p,q)$ for some $p=p_Q$ and
  $q=q_Q$ with $p+q=d$ in the complex case, and $O^*(2d)$ in the
  quaternionic case (see \cite{He}).  Thus,
\[
L=\prod_{Q\in R_r^{\mathrm{re}}}Sp(d_Q)\times \prod_{Q\in
  R_c^{\mathrm{re}}}U(p_Q,q_Q)\times \prod_{Q\in
  R_q^{\mathrm{re}}}O^*(2d_Q).
\]
The maximal compact subgroups of $Sp(d), U(p,q)$, and $O^*(2d)$ are
$U(d/2), U(p)\times U(q)$, and $U(d)$, respectively; they are
connected, so $L$ is a connected group, as desired.
\end{proof} 

Let $V$ be a compact symplectic $C^\infty$-manifold and $G$ be a
finite group acting faithfully on $V$ by symplectic
diffeomorphisms. Lemma \ref{tri} implies (using the notation of the
previous subsection) that for any $Z\in C_K$, the local system
$z\to H(T_zV/T_zZ)$ on $Z$ is trivial\footnote{In the complex
  algebraic or analytic case, the local system $H(T_zV/T_zZ)$ need not
  be trivial; an example will be discussed in a forthcoming paper.}.
Note that the fibers $H(T_z V/T_zZ)$ should now be defined using
$C^\infty$-functions, but the result is the same as in the case of
polynomial functions, so they are in particular finite-dimensional.
Denote by $H_Z$ the space of sections of this local system; it is
naturally identified with every fiber.  The space $H_Z$ carries a
natural action of the group $N_Z^0(K)$.

\begin{proposition}
  Let ${\mathcal D}$ be the space of distributions on $V$ that are
  invariant under Hamiltonian flow produced by $G$-invariant
  Hamiltonians. Then,
\begin{enumerate}
\item[(i)]
  $\displaystyle {\mathcal D}=\bigoplus_{K\in Par(G,V)}\bigoplus_{Z\in C_K}H_Z^*$, \\
  In particular, the space ${\mathcal D}$ is finite-dimensional, and
  \[
  \dim {\mathcal D}=\sum_{K\in Par(G,V)}\sum_{Z\in C_K}\dim H_Z.
  \]
\item[(ii)] The space ${\mathcal D}^G$ of invariant distributions on
  the symplectic orbifold $V/G$ is finite-dimensional, and is given by
\[
{\mathcal D}^G=\bigoplus_{K\in Par(G,V)/G}\bigoplus_{Z\in C_K/N(K)} (H_Z^*)^{N_Z^0(K)}.
\]
\end{enumerate}
\end{proposition}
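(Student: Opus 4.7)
The plan is to adapt the proof of Theorem \ref{quotsin2} to the $C^\infty$-setting, replacing the formal Darboux-Weinstein theorem by an equivariant slice theorem. First, I would stratify $V$ by the conjugacy type of the stabilizer and further by connected components: the pieces are $(V^K)^\circ \cap Z$ for $K \in \Par(G,V)$ and $Z \in C_K$. Around each $Z \in C_K$, the equivariant tubular neighborhood theorem, combined with the equivariant Weinstein Lagrangian neighborhood theorem (applied to $Z$ as a symplectic submanifold with $N_Z(K)$-action), provides a $G$-invariant open neighborhood $U_Z$ of $G \cdot Z$ that is $N_Z(K)$-equivariantly symplectomorphic to a neighborhood of the zero section in the symplectic normal bundle $E_Z = TV|_Z/TZ$, with $K$ acting fiberwise.

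Second, I would analyze the $G$-Hamiltonian-invariant distributions supported in $\overline{G\cdot Z}$ and not supported on any deeper stratum. Pulling back along the slice, such a distribution near $Z$ factors (by the usual distributional Fubini argument) as a distribution on $Z$ tensor a family of distributions on the fibers $E_{Z,z}$ supported at the origin. Invariance under Hamiltonian vector fields coming from $G$-invariant functions that are constant on fibers near $Z$ forces the $Z$-factor to be a smooth multiple of the Liouville form on $Z$; invariance under fiberwise Hamiltonian vector fields generated by $K$-invariant functions identifies the fiber distribution at $z$ with an element of $H(E_{Z,z})^*$. Lemma \ref{tri} guarantees that the resulting local system of fiber contributions is trivial, so these fiberwise distributions glue to a globally defined element of $H_Z^*$, and every element of $H_Z^*$ is realized by such a distribution (cut off by a $G$-invariant bump function supported in $U_Z$, which does not affect invariance since the cutoff is constant on fibers).

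The hardest step will be showing that assembling these contributions yields an isomorphism $\bigoplus_{K, Z} H_Z^* \xrightarrow{\sim} \mathcal{D}$. Injectivity follows from support and transversality: the contribution attached to $(K, Z)$ produces a distribution whose leading transverse jet along $Z$ is the chosen element of $H_Z^*$, and contributions from other strata either have different support closures or have vanishing transverse jet along $Z$. For surjectivity, I would argue by descending induction on the dimension of the support: given $\xi \in \mathcal{D}$, locate a stratum component $Z$ of maximal dimension meeting $\mathrm{supp}(\xi)$, use the slice to extract the leading transverse jet along $Z$ as an element of $H_Z^*$, subtract the corresponding canonical distribution, and verify via the slice decomposition that the remainder is supported on strata of strictly smaller dimension. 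Compactness of $V$ ensures finiteness and convergence of this process.

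Finally, part (ii) is obtained from (i) by taking $G$-invariants. The group $G$ permutes the index set $\{(K, Z)\}$ by $g \cdot (K, Z) = (gKg^{-1}, gZ)$; the stabilizer of $(K, Z)$ is $N_Z(K)$, and $K$ itself acts trivially on the fiber contribution, so the residual action on $H_Z^*$ factors through $N_Z^0(K) = N_Z(K)/K$. Averaging over orbits yields the asserted formula $\mathcal{D}^G = \bigoplus_{K \in \Par(G,V)/G} \bigoplus_{Z \in C_K/N(K)} (H_Z^*)^{N_Z^0(K)}$, and finite-dimensionality follows from finiteness of the index set together with finite-dimensionality of each $H_Z$ (the latter from Corollary \ref{co2} applied to $H(T)$).
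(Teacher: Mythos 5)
Your proposal is correct and follows essentially the same route as the paper: an equivariant symplectic normal form along each component $Z$ of $V^K$ (the paper uses local $K$-compatible Darboux models $T\times X$ glued along $Z$, you use a global equivariant tubular neighborhood), identification of the fiberwise invariant distributions with $H(T)^*$ via the earlier results on distributions supported at a point, triviality of the local system from Lemma \ref{tri}, and passage to $G$-invariants for part (ii). The only slight imprecisions are terminological (you want the equivariant symplectic, not Lagrangian, neighborhood theorem, since $Z$ is a symplectic submanifold) and the phrase ``smooth multiple of the Liouville form,'' which should be a constant multiple on each connected $Z$ --- this constancy is exactly what makes the contribution a flat section, i.e., an element of $H_Z^*$, as you in fact conclude.
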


\begin{proof}
  Let $T$ be a symplectic real vector space, $K\subset Sp(T)$.  By
  Proposition \ref{invdi1}, the space of distributions on $T$
  supported at $0$ and invariant under $K$-invariant Hamiltonian flow
  is naturally identified with $H(T)^*$.  Moreover, the same is true
  about the space of such distributions on $T\times X$ which are
  supported at $0\times X$, where $X$ is any connected symplectic
  manifold.  Now, by an equivariant version of the Darboux theorem,
  for every $K\in \Par(G,V)$, $Z\in C_K$, and $z\in Z$, $V$ can be
  represented near $z$ in the form $T\times X$ in a $K$-compatible way
  (where $X$ is an open set in the standard symplectic vector space of
  the appropriate dimension). Gluing together such representations
  along $Z$, we see that every element $\xi\in {\mathcal D}$ can be
  canonically written as a sum of contributions $\xi_Z$ supported on
  $Z\in C_K$, for $K\in \Par(G,V)$, and each such contribution belongs to
  $H^0(Z, H(TV|_{Z}/TZ)^*)$.  By Lemma \ref{tri}, the local systems
  $H(TV|_{Z}/TZ)$ are trivial.  This proves (i). Statement (ii)
  follows from (i) by taking $G$-invariants.
\end{proof}

\newpage

\appendix
\section{Prime and primitive ideals}

\vskip .1in
\centerline{\bf By Ivan Losev}
\vskip .1in

Let $A$ be an associative unital algebra equipped with an increasing exhaustive filtration
$\F_i A, i\geqslant 0$.
Consider the associated graded algebra $A_0$ of $A$ and the center $Z$ of $A_0$.
Then $Z$ is a graded commutative algebra. Let $Z^i$ denote the $i$-th graded subspace.

We assume that there is an integer $d>0$ and a linear embedding $g:Z\rightarrow A$ satisfying the following conditions
\begin{itemize}
\item $g(Z^i)\subset \F_i A$ for all $i$ and, moreover, the composition of $g$ with the projection
$\F_i A\twoheadrightarrow \F_i A/\F_{i-1} A$ coincides with the inclusion $Z^i\hookrightarrow \F_i A/\F_{i-1} A$.
\item $[g(Z^i),\F_j A]\subset \F_{i+j-d}A$ for all $i,j$.
\end{itemize}

Under this assumption the algebra $Z$ has a natural Poisson bracket: for $a\in Z^i, b\in Z^j$ for
$\{a,b\}$ take the image of $[g(a),g(b)]$ in $F_{i+j-d}A/\F_{i+j-d-1} A$ (this image lies in $Z^{i+j-d}$).
The claim that the map $(a,b)\mapsto \{a,b\}$ extends to a Poisson bracket on $Z$ is checked directly.
It is also checked directly that this Poisson bracket is independent on the choice of the embedding $g$. 

For example, if $A$ is {\it almost commutative}, that is $Z=A_0$, then for $d$ one can take the maximal integer
with $[\F_i A, \F_j A]\subset \F_{i+j-d}A$ for all $i,j$ (and for $g$ the direct sum of sections
of the projections $\F_i A\twoheadrightarrow \F_i A/\F_{i-1}A=Z^i $).

Another example, where $d$ and $g$ exist is that of symplectic reflection algebras (see Example 1.5), there
$d=2$.

The goal of this appendix is to prove the following claim.

\begin{theorem}
Suppose that there are $g:Z\rightarrow A$ and $d>0$ as above. 
Further, suppose that
\begin{itemize} \item $Z$ is finitely generated and its spectrum
has finitely many symplectic leaves.
\item $A_0$ is a finitely generated  module over $Z$.
\end{itemize}
Then the following assertions hold
\begin{enumerate}
\item
$A$ has finitely many prime ideals,
\item every prime ideal is primitive.
\end{enumerate}
\end{theorem}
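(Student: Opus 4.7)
The plan is to first reduce the finiteness of $\Spec A$ to a local question at each closed Poisson subvariety of $\Spec Z$, resolve that local question via Corollary~\ref{fintracepm}, and then deduce primitivity of every prime from the fact that $A$ is Jacobson. To each prime $P\subseteq A$ I attach its associated variety $V(P):=V(\sqrt{\gr P\cap Z})\subseteq\Spec Z$. The hypothesis $[g(Z^i),\F_j A]\subseteq\F_{i+j-d}A$ passes through the associated graded and shows that $\gr P\cap Z$ is stable under the Poisson bracket on $Z$, so $V(P)$ is a closed Poisson subvariety. Since $\Spec Z$ has finitely many symplectic leaves, it admits only finitely many closed Poisson subvarieties (each being a union of leaf closures), so $P\mapsto V(P)$ takes only finitely many values.

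It therefore suffices to fix a closed Poisson subvariety $Y\subseteq \Spec Z$ with generic point $\mathfrak{q}$ and bound the number of primes $P$ of $A$ with $V(P)=Y$. Completing the filtered pair $(A,Z)$ along $\mathfrak{q}$—via the Rees construction followed by $\mathfrak{q}$-adic completion—produces an $\hbar$-adic formal deformation of the completion $\widehat{(A_0)}_{\mathfrak{q}}$ that fits the hypotheses of Corollary~\ref{fintracepm}. That corollary then yields that the resulting $\KK((\hbar))$-algebra has a finite-dimensional space of cocommutators, and hence only finitely many simple finite-dimensional modules by linear independence of characters. Via a dictionary between primes of $A$ with a prescribed associated variety and finite-dimensional simples of a formal slice quantization, this bounds the number of primes with $V(P)=Y$, and summing over the finitely many possible $Y$ gives the finiteness of $\Spec A$.

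For assertion (2), note that $A$ is finite over the commutative finitely generated subalgebra $g(Z)$—lift a finite generating set of $A_0$ over $Z$ and induct on the filtration—so $A$ is a Noetherian Jacobson ring, and every prime equals the intersection of the primitives containing it. With only finitely many primes overall, each prime $P$ equals an intersection $Q_1\cap\dots\cap Q_n$ of finitely many primitives containing it; then $Q_1Q_2\cdots Q_n\subseteq Q_1\cap\dots\cap Q_n=P$ as two-sided ideals, and primality of $P$ forces some $Q_i\subseteq P$, so $P=Q_i$ is primitive.

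The technical heart of the argument is the slice-completion step: one must translate the purely trace-theoretic content of Corollary~\ref{fintracepm} into an honest bound on prime ideals of $A$ with a prescribed associated variety. This requires both verifying that the formal slice really does fall under the corollary's hypotheses (getting the filtration and the factor $\hbar^d$ correct in the deformation), and establishing a clean correspondence between primes at a given leaf and finite-dimensional simples of the slice quantization, so that no primes are double-counted or lost between strata. Making this precise in the stated generality—$d$ arbitrary, $A_0$ not necessarily commutative, and $Z$ possibly strictly smaller than $A_0$—is what I expect to be the main obstacle.
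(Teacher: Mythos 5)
Your overall strategy is the one the paper follows (stratify primes by their Poisson associated variety, pass to a completion, invoke Corollary~\ref{fintracepm} to bound finite-dimensional simples, then combine Jacobson-ness with finiteness of $\Spec A$ to get primitivity), but two essential steps are not actually carried out, and one is justified by a false claim. For part (1), the ``dictionary between primes of $A$ with prescribed associated variety and finite-dimensional simples of a formal slice quantization'' is not a routine verification to be deferred: it \emph{is} the proof. Concretely, one must (a) pass to the Rees algebra and identify primes of $A$ with homogeneous $\hbar$-saturated primes of $A_\hbar$; (b) complete at a \emph{closed} point $y$ of the leaf $Y$ (not at the generic point $\mathfrak q$ of $V(P)$: the completion at $\mathfrak q$ has residue field the function field of $Y$, so it is not a complete local $\KK$-algebra and Corollary~\ref{fintracepm} does not apply to it), establish Noetherianity and an Artin--Rees lemma there; (c) split the completion as $\W^\wedge_\hbar\widehat{\otimes}_{\KK[[\hbar]]}\underline{A}^\wedge_\hbar$ with $\W^\wedge_\hbar$ a completed Weyl algebra quantizing the leaf directions, so that the slice algebra $\underline{A}^\wedge_\hbar$ is the object satisfying the corollary's hypotheses; (d) show $\hbar$-saturated ideals of the completion correspond bijectively to those of the slice, and construct a derivation $\underline D$ on the slice lifting the Euler grading, since without tracking homogeneity ($D$-stability) you cannot pass back from ideals of the completion to honest primes of $A$; and (e) prove that every $I_\hbar\in\Prim_Y$ is recovered from a finite-codimension prime of the slice, which in the paper requires a minimal-prime argument and a Gelfand--Kirillov dimension comparison (Borho--Kraft) to see that $J_\hbar^{fin}=I_\hbar$, i.e.\ that no primes are lost or conflated. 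None of this is supplied in your proposal, and you acknowledge it as ``the main obstacle''; as written, part (1) is therefore asserted rather than proved.

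For part (2), your reduction ``Jacobson $+$ finitely many primes $\Rightarrow$ every prime is primitive'' agrees with the paper, but your justification of Jacobson-ness is incorrect: $g$ is only a \emph{linear} lift of $Z$, so $g(Z)$ is in general not a subalgebra of $A$, and even the subalgebra it generates is not commutative (the hypothesis only gives $[g(Z^i),\F_jA]\subset\F_{i+j-d}A$, not vanishing commutators), so ``$A$ is finite over the commutative finitely generated subalgebra $g(Z)$'' has no content. This is exactly why the paper goes through the Rees algebra: it takes $Z_\hbar\subset A_\hbar$ to be the preimage of the center of $A_\hbar/\hbar^dA_\hbar$, forms the filtered algebra $\widetilde A=Z_\hbar/(\hbar-1)Z_\hbar$ whose associated graded is commutative and finitely generated, applies Quillen's lemma and the McConnell--Robson results to conclude $\widetilde A$ (hence its quotient $A$, since $\widetilde A\to A$ is surjective) is Jacobson. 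You need this, or some equally substantive substitute, before the final intersection-of-primitives argument can run.
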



We start by deriving part (2) from part (1).  It is enough show that every prime ideal of $A$
is the intersection of primitive ideals containing it
(if this is the case one says that $A$ is a {\it Jacobson} ring).

Consider the {\it Rees algebra} $A_\hbar$ of $A$ defined by $A_\hbar:=\bigoplus_{i\geqslant 0} (\F_i A)\hbar^i\subset A[\hbar]$. This is a graded algebra. The quotient $A_\hbar/\hbar A_\hbar$ 
is naturally identified with $A_0$, while for $a\neq 0$ we have $A_\hbar/(\hbar-a)A_\hbar\cong A$.

Consider the algebra $A_\hbar/\hbar^{d}A_\hbar$ and let $Z_{(d)}$ denote its center. The existence of $g$
implies that the image of $Z_{(d)}$ in $A_0$ under the natural epimorphism $A_\hbar/ \hbar^{d}A_\hbar\twoheadrightarrow A_0$ coincides with $Z$. Let $Z_\hbar$ denote the preimage of $Z_{(d)}$ in $A_\hbar$.
Then $Z_\hbar$ is a graded subalgebra of
$A_\hbar$ and $\widetilde{A}_0:=Z_\hbar/\hbar Z_\hbar$ is commutative.
Moreover, the kernel of the natural epimorphism $\widetilde{A}_0\rightarrow Z$ is naturally identified
with $A_0/Z$. The square of the kernel is zero. So the kernel is a finitely generated $Z$-module.
It follows that $\widetilde{A}_0$ is  finitely generated. Now we remark that $Z_\hbar$
is flat and hence free as a graded $\K[\hbar]$-module. 
The algebra $\widetilde{A}:=Z_\hbar/(\hbar -1)Z_\hbar$ is equipped with a filtration induced from the grading
on $Z_\hbar$. Its associated graded is $\widetilde{A}_0$.  
So we can apply Quillen's lemma, see e.g., \cite{MCR}, 9.7.3, to $\widetilde{A}[x]$ to get that $\widetilde{A}[x]$ acts by a character on any irreducible module.
The   algebra $\widetilde{A}[x]$ is $\Z_{\geqslant 0}$-filtered
($\K[x]$ has degree 0) and $\gr \widetilde{A}[x]=\widetilde{A}_0\otimes \K[x]$ is finitely generated.
Applying \cite{MCR}, Proposition 1.6 and Lemma 1.2, we see that $\widetilde{A}$ is Jacobson.
But the embedding $Z_\hbar\hookrightarrow A_\hbar$ gives rise to a homomorphism $\widetilde{A}\rightarrow A$
and this homomorphism is surjective. Being a quotient of a Jacobson ring, $A$ is Jacobson too.


Now let us prove  part (1). We will do this in seven steps.

{\it Step 1.}
 To a two-sided ideal
$I\subset A$ assign the ideal $R_\hbar(I):=\bigoplus_{i\geqslant 0} (I\cap \F_iA)\hbar^i$ in $A_\hbar$. This ideal is
homogeneous and $\hbar$-saturated in the sense that $\hbar a\in R_\hbar(I)$ implies $a\in R_\hbar(I)$.
Of course, an ideal $I_\hbar\subset A_\hbar$ is $\hbar$-saturated if and only if $A_\hbar/I_\hbar$ is $\K[\hbar]$-flat.
Conversely, to a homogeneous $\hbar$-saturated ideal $I_\hbar\subset A_\hbar$ assign
the ideal $I_\hbar/(\hbar-1)I_\hbar$ in $A$. It is easy to check that the maps $I\mapsto R_\hbar(I), I_\hbar\mapsto
I_\hbar/(\hbar-1)I_\hbar$ are mutually inverse bijections. Moreover, the ideal $R_\hbar(I)$ is prime if and only if
the ideal $I$ is prime. So we need to check that there are finitely many homogeneous $\hbar$-saturated prime
ideals in $A_\hbar$.

Recall that to any finitely generated module $M$ over a Noetherian
algebra one can assign its Gelfand-Kirillov dimension $\Dim M$. Now to a finitely generated
$A_\hbar$-module $M_\hbar$ one assigns its associated variety $\VA(M_\hbar)\subset \Spec(Z)$. By definition,
$\VA(M_\hbar)$ is the support of $M_\hbar/ \hbar M_\hbar$. We can define the associated
variety of a finitely generated $A$-module $M$ by equipping $M$ with a filtration and setting
$\VA(M)=\VA(R_\hbar(M))$. We have $\dim \VA(M)=\Dim M, \dim \VA(M_\hbar)=\Dim M_\hbar-1$.

Let $Y$ be a symplectic  leaf on $\Spec(Z)$. Consider the set $\Prim_Y$ consisting of all
homogeneous $\hbar$-saturated prime ideals  $I_\hbar\subset A_\hbar$ such that $\overline{Y}$ is an
irreducible component of maximal dimension in the associated variety $\VA(A_\hbar/I_\hbar)$. Since there are only
finitely many symplectic leaves, it is enough to show that each set $\Prim_Y$ is finite.

{\it Step 2.}
We are going to compare the set $\Prim_Y$ with the set of  prime ideals in a certain
completion of $A_\hbar$. A similar technique was used in \cite{Losqsa}.

Pick a point $y\in Y$ and let $\m_y$  be its maximal ideal in $Z$. Let $\widetilde{\m}_y$
denote the preimage of $\m_y$ in $Z_\hbar$ under the natural epimorphism $Z_\hbar\twoheadrightarrow Z$.
Consider the completion $A^\wedge_\hbar:=\varprojlim_k A_\hbar/A_\hbar\widetilde{\m}_y^k$.
Since $[A_\hbar,Z_\hbar]\subset \hbar^d A_\hbar$, we see that $A_\hbar \widetilde{\m}_y^k$
is actually a two-sided ideal coinciding with $\widetilde{\m}_y^k A_\hbar$.
Therefore $A^\wedge_\hbar$ acquires a natural
complete topological algebra structure. Moreover, $(A_\hbar\widetilde{\m}_y^{k})(A_\hbar\widetilde{\m}_y^l)=A_\hbar\widetilde{\m}^{k+l}$.
One can easily show that $A_\hbar^\wedge/\hbar A_\hbar^\wedge=A_0^\wedge:=\varprojlim_k A_0/A_0\m_y^k$.
The algebra $A_0^\wedge$ is finite over the completion $\varprojlim_k Z/\m_y^k$. The latter is the completion
of a commutative finitely generated algebra and hence is Noetherian. Therefore $A_0^\wedge$ is also Noetherian.
It follows that the algebra $A_\hbar^\wedge$ is Noetherian too.


For a finitely generated $A_\hbar$-module $M_\hbar$ consider the completion $M_\hbar^\wedge:=\varprojlim_{k\rightarrow\infty} M_\hbar/ \widetilde{\m}^k_y M_\hbar$.
To ensure good properties of completions one needs an analog of the Artin-Rees lemma
from Commutative algebra, see \cite{Eisca}, Chapters 5 and 7.
The following lemma incorporates all results related to the Artin-Rees lemma we need.

\begin{lemma}\label{Lem:Rees}
\begin{enumerate}
\item The blow-up algebra $\bigoplus_{i\geqslant 0} A_\hbar \widetilde{\m}^{i}_y$ is Noetherian.
\item The Artin-Rees lemma holds for any submodule $N_\hbar$ in a finitely generated  left $A_\hbar$-module $M_\hbar$. That is,
there exists $k\in \N$ such that $N_\hbar \cap \widetilde{\m}_{y}^{k+l}M_\hbar=\widetilde{\m}_y^l(N_\hbar\cap \widetilde{\m}^k_y M_\hbar)$ for all $l\geqslant 0$.
\item The completion functor $M_\hbar\mapsto M_\hbar^\wedge$ is exact and isomorphic to
$M_\hbar\mapsto A_\hbar^\wedge\otimes_{A_\hbar}M_\hbar$.
\item If $M_\hbar$ is $\K[\hbar]$-flat, then $M^\wedge_\hbar$ is $\K[[\hbar]]$-flat.
\item The quotient $M_\hbar^\wedge/\hbar M_\hbar^\wedge$ coincides with the completion of
the $Z$-module $M_\hbar/\hbar M_\hbar$ at $y$.
\item $M_\hbar^\wedge=\{0\}$ if and only if $y\not\in\VA(M_\hbar)$.
\item Any finitely generated $A^\wedge_\hbar$-module is complete and separated in the
$\widetilde{\m}_y$-adic topology.
\item Any submodule in a finitely generated $A^\wedge_\hbar$-module is closed in the
$\widetilde{\m}_y$-adic topology.
\end{enumerate}
\end{lemma}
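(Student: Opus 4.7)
The strategy is to reduce everything to the Noetherianness statement in part (1), which plays the role of the commutative Rees/Artin-Rees setup, and then derive (2)--(8) in the standard order. The key observation making the reduction possible is that, although $A_\hbar$ is not commutative, $\widetilde{\m}_y$ sits inside the ``almost central'' subalgebra $Z_\hbar$, so one has $A_\hbar \widetilde{\m}_y^k = \widetilde{\m}_y^k A_\hbar$ and $\widetilde{\m}_y$-adic arguments behave as in commutative algebra up to controlled errors in powers of $\hbar$.

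For part (1), I would proceed in two steps. First, I would check that $A_\hbar$ is finitely generated as a module over $Z_\hbar$: this is a graded Nakayama argument using that $A_0$ is finite over $Z$ (equivalently over the quotient $\widetilde{A}_0 = Z_\hbar/\hbar Z_\hbar$, which surjects onto $Z$ with square-zero kernel), combined with $\hbar$-adic completeness/gradedness of $A_\hbar$. Second, the commutative Rees algebra $R := \bigoplus_{i\geq 0}\widetilde{\m}_y^i$ is Noetherian (since $Z_\hbar$ is a finitely generated commutative $\K$-algebra and $\widetilde{\m}_y$ is a finitely generated ideal). The blow-up algebra $\bigoplus_{i\geq 0} A_\hbar\widetilde{\m}_y^i$ is a finitely generated module over $R$ (inheriting generators from $A_\hbar$ over $Z_\hbar$), hence Noetherian.

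Parts (2)--(8) then unfold as in the commutative theory. For (2), apply (1) to the graded submodule $\bigoplus_i (N_\hbar \cap \widetilde{\m}_y^i M_\hbar)$ of $\bigoplus_i \widetilde{\m}_y^i M_\hbar$; Noetherianness over the blow-up algebra yields finitely many generators and hence the Artin-Rees bound. Part (3) follows from (2) in the standard way: the Artin-Rees lemma makes the relevant inverse systems Mittag-Leffler, so completion is exact, and right-exactness plus finite generation identifies it with $A_\hbar^\wedge \otimes_{A_\hbar} -$. Part (4) is obtained by applying exactness of completion to the injection $M_\hbar \xrightarrow{\hbar} M_\hbar$. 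Part (5) follows by completing the short exact sequence $0 \to M_\hbar \xrightarrow{\hbar} M_\hbar \to M_\hbar/\hbar M_\hbar \to 0$ and using that on the finitely generated $Z$-module $M_\hbar/\hbar M_\hbar$, the $\widetilde{\m}_y$-adic and $\m_y$-adic topologies agree. Part (6) is then immediate, since $y \in \VA(M_\hbar)$ means $(M_\hbar/\hbar M_\hbar)_{\m_y}^\wedge \neq 0$. Parts (7) and (8) follow from (3) applied to finite presentations over $A_\hbar^\wedge$: any finitely generated $A_\hbar^\wedge$-module is a quotient of $(A_\hbar^\wedge)^n$, which is complete and separated, and any submodule is the image of a finitely generated $A_\hbar^\wedge$-module, so it is closed.

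The main obstacle is part (1), specifically verifying that $A_\hbar$ is finitely generated as a $Z_\hbar$-module and that this finiteness passes cleanly to the blow-up algebra; once that is in hand, the rest is a routine transplant of the commutative Artin-Rees machinery to the central-enough setting provided by $[A_\hbar, Z_\hbar] \subseteq \hbar^d A_\hbar$.
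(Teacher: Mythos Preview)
Your overall architecture is correct: everything hinges on (1), and (2)--(8) are indeed standard consequences along the lines you describe. The paper says exactly this, pointing to Subsection~2.4 of \cite{Losfdr} for the derivations of (2)--(8).

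The genuine gap is in your argument for (1). You assert that $Z_\hbar$ is a finitely generated \emph{commutative} $\K$-algebra, and then invoke the commutative Rees/blow-up result. But $Z_\hbar$ is \emph{not} commutative: by definition it is the preimage in $A_\hbar$ of the center $Z_{(d)}$ of $A_\hbar/\hbar^d A_\hbar$, so it contains $\hbar^d A_\hbar$ and one only has $[Z_\hbar,Z_\hbar]\subset \hbar^d Z_\hbar$. In particular the Rees algebra $\bigoplus_{i\geq 0}\widetilde{\m}_y^i$ is genuinely noncommutative, and your reduction to the commutative case breaks down precisely here.

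The paper's fix is as follows. One still shows $A_\hbar$ is finite over $Z_\hbar$ (your Nakayama argument is fine for this), so it suffices to prove $\bigoplus_{i\geq 0}\widetilde{\m}_y^i$ is Noetherian. Since $Z_\hbar/\hbar Z_\hbar$ is commutative and finitely generated, $Z_\hbar$ is Noetherian and $\widetilde{\m}_y$ is finitely generated; hence $\bigoplus_{i\geq 0}\widetilde{\m}_y^i$ is finite over the even-degree subalgebra $\bigoplus_{i\geq 0}\widetilde{\m}_y^{2i}$. The point of squaring is that $[\widetilde{\m}_y^2,\widetilde{\m}_y^2]\subset \hbar\,\widetilde{\m}_y^2$ (coming from $[Z_\hbar,Z_\hbar]\subset \hbar^d Z_\hbar$), which is exactly the hypothesis of the noncommutative Noetherianity criterion Lemma~2.4.2 of \cite{Losfdr} for the pair $\widetilde{\m}_y^2\subset Z_\hbar$. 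That lemma then gives Noetherianness of $\bigoplus_{i\geq 0}\widetilde{\m}_y^{2i}$, completing (1). So the missing idea in your proposal is this passage to $\widetilde{\m}_y^2$ to gain the commutator control needed for a noncommutative Artin--Rees setup.
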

\begin{proof}
Let us prove (1). The remaining statements are more or less standard corollaries of (1) and are proved analogously to the corresponding statements in Subsection 2.4 of \cite{Losfdr} (the Artin-Rees lemma (assertion 2) is not proved there explicitly). The algebra $\bigoplus_{i\geqslant 0} A_\hbar \widetilde{\m}^i_y$ is generated by $A_\hbar$ (=the component of degree 0) as a module over the blow-up algebra $\bigoplus_{i\geqslant 0}\widetilde{\m}^i_y$. Since $A_\hbar$
is a finite $Z_\hbar$-module, we see that $\bigoplus_{i\geqslant 0} A_\hbar \widetilde{\m}^i_y$ is finite
over $\bigoplus_{i\geqslant 0}\widetilde{\m}^i_y$.

The algebra $Z_\hbar/\hbar Z_\hbar$ is commutative and finitely generated
(see the proof of the implication (2)$\Rightarrow$(1) above).
From here it is easy to deduce that $Z_\hbar$ is Noetherian.
So $\widetilde{\m}_y$ is a finitely generated $Z_\hbar$-module.
It follows that $\bigoplus_{i\geqslant 0}\widetilde{\m}^i_y$
is a finitely generated module over $\bigoplus_{i\geqslant 0}\widetilde{\m}^{2i}_y$. Indeed, the former
is generated by $\widetilde{\m}_y$ (its component of degree 1) over the latter.

 Therefore it is enough to check that
$\bigoplus_{i\geqslant 0}\widetilde{\m}^{2i}_y$ is Noetherian.
This will follow if we check that the pair $\widetilde{\m}_y^2\subset Z_\hbar$ satisfies the assumptions
of Lemma 2.4.2 in \cite{Losfdr}.
We have already checked that $Z_\hbar/\hbar Z_\hbar$ is commutative and Neotherian. Also  $[\widetilde{\m}^2_y,\widetilde{\m}^2_y]\subset \hbar \widetilde{\m}^2_y$ because $[\widetilde{\m}_y,\widetilde{\m}_y]= [Z_\hbar,Z_\hbar]\subset \hbar^d Z_\hbar$.
\end{proof}

Now let us  construct a certain derivation $D$ of $A_\hbar^\wedge$. Consider  the derivation $D$ of $A_\hbar$ induced by the grading: $D(a)=ka$ for $a\in A_\hbar$
of degree $k$. Then $D$ is continuous in the $\widetilde{\m}_y$-adic topology so we can extend
it to $A_\hbar^\wedge$, the extension is the derivation we need.

Let $I_\hbar$ be a homogeneous ideal in $A_\hbar$. Then its completion $\overline{I}_\hbar$ with
respect to the $\widetilde{\m}_y$-adic topology is a $D$-stable ideal in $A^\wedge_\hbar$.
Assertions (3) and (4) of Lemma \ref{Lem:Rees} imply that $I^\wedge_\hbar$ is $\hbar$-saturated whenever $I_\hbar$ is.
By assertion (7), $I^\wedge_\hbar$ is closed in $A_\hbar^\wedge$ with respect to the $A\widetilde{\m}_y$-adic topology.

To a two-sided ideal $J_\hbar\subset A^\wedge_\hbar$ we assign its inverse image $J_\hbar^{fin}$ under the natural homomorphism $A_\hbar\rightarrow A^\wedge_\hbar$.  Clearly, $J_\hbar^{fin}$ is  homogeneous and $\hbar$-saturated
provided $J_\hbar$ is so.

{\it Step 3.} Now we would like to decompose $A^\wedge_\hbar$ into the completed tensor
product of a Weyl algebra and of some "slice" algebra. Let $\widetilde{X}$ denote the spectrum of
$Z_\hbar/\hbar Z_\hbar$. This is a non-reduced Poisson scheme, the corresponding reduced scheme is $X$.

Consider the completions $\widetilde{X}^\wedge,Y^\wedge$ of $\widetilde{X},Y$ at $y$. Then $Y^\wedge$ is a symplectic
leaf of the Poisson formal  scheme $\widetilde{X}^\wedge$. According to \cite{Kalss}, Proposition 3.3,
$\widetilde{X}^\wedge$ can be decomposed into the product $Y^\wedge\times \underline{X}^\wedge$, where
$\underline{X}^\wedge$ is a Poisson formal scheme such that a point $y\in \underline{X}^\wedge$
forms a  symplectic leaf. We are going to show that one can lift  this decomposition   to a decomposition of $A$.

More precisely, let $\W^\wedge_\hbar$  denote the completed Weyl algebra of $T_yY$, i.e., the algebra $\K[[T_yY,\hbar]]$ of formal power series in $T_yY$ and $\hbar$, where the multiplication is given by the Moyal-Weyl star-product $f*g=\mu\exp(\frac{\hbar^d P}{2} f\otimes g)$. Here $\mu$ denotes the multiplication map, $P$ is a Poisson bivector on $T_yY$.
We claim  there is a complete topological subalgebra $\underline{A}^\wedge_\hbar\subset A^\wedge_\hbar$
such that $\Spec(\underline{A}^\wedge_\hbar/(\hbar))=\underline{X}^\wedge_x$ and
\begin{equation}\label{eq:A1} A^\wedge_\hbar=
\W^\wedge_\hbar\widehat{\otimes}_{\K[[\hbar]]}\underline{A}_\hbar^\wedge,
\end{equation} where $\widehat{\otimes}$
stands for the completed (in the $\widetilde{\m}_y$-adic topology) tensor product.


Similarly to the proof of Proposition 7.1 in \cite{Los1dr}, we see that the embedding $\K[Y]^\wedge_y\hookrightarrow
\K[X]^\wedge_y$ lifts to an embedding $\W^\wedge_\hbar\hookrightarrow Z_\hbar^\wedge\subset A_\hbar^\wedge$.
For $\underline{A}_\hbar^\wedge$ take the centralizer of $\W^\wedge_\hbar$ in $A_\hbar^\wedge$. We have a natural topological algebra homomorphism $\W^\wedge_\hbar\widehat{\otimes}_{\K[[\hbar]]}\underline{A}^\wedge_\hbar$.
We need to check that it is an isomorphism. This is done analogously to the proof of Proposition 3.3.1 from
\cite{Losfdr}.

In the sequel, we identify $A^\wedge_\hbar$ with the completed tensor product
above.

{\it Step 4.} We want to construct a bijection between the set of $\hbar$-saturated two-sided ideals
in $A^\wedge_\hbar$ and the analogous set for $\underline{A}^\wedge_\hbar$.
It is easy to see (compare with Lemma 3.4.3 in \cite{Losqsa}) that any $\hbar$-saturated two-sided
ideal $J_\hbar\subset A^\wedge_\hbar$ (which is automatically closed, see Lemma \ref{Lem:A2})
is of the form $\W^\wedge_\hbar\widehat{\otimes}_{\K[[\hbar]]}\underline{J}_\hbar$
for a uniquely determined $\hbar$-saturated two-sided ideal $\underline{J}_\hbar\subset \underline{A}^\wedge_\hbar$.
Also
\begin{equation}\label{eq:A3}
\VA(A^\wedge_\hbar/I^\wedge_\hbar)=Y^\wedge\times \VA(\underline{A}_\hbar^\wedge/\underline{I}^\wedge_\hbar).
\end{equation}

{\it Step 5.}
Recall that we are primarily interested in {\it $D$-stable} $\hbar$-saturated
ideals in $A^\wedge_\hbar$. So  we want to construct  a derivation
$\underline{D}$ of $\underline{A}^\wedge_\hbar$ such that $\underline{D}(\hbar)=\hbar$
and $\underline{\J}_\hbar$ is $\underline{D}$-stable provided $\J_\hbar$ is $D$-stable.

Pick a lagrangian subspace $U\subset T_yY$. Note that
\begin{equation}\label{eq:A2}
\underline{A}^\wedge_\hbar:=(A^\wedge_\hbar/A^\wedge_\hbar U)^{\ad U}.
\end{equation}

Pick a basis $u_1,\ldots,u_k\in U$. We claim that there is $a\in A^\wedge_\hbar$ such that
\begin{equation}\label{eq:A11}[u_1,a]=\hbar^d D(u_1).\end{equation} Indeed, choose a complimentary lagrangian subspace $U'\subset T_yY$
and pick the basis $v_1,\ldots,v_k\in U'$ dual to $u_1,\ldots,u_k$. There are unique elements $$P_i\in \K[[u_1,\ldots,u_k,v_2,\ldots, v_k,\hbar]]\widehat{\otimes}_{\K[[\hbar]]}\underline{A}^\wedge_\hbar$$
such that $D(u_1)=\sum_{i=0}^\infty
P_i v_1^i$.
Then  $a:=\int D(u_1) dv_1:= \sum_{i=0}^\infty P_i \frac{v_1^{i+1}}{i+1}$ satisfies (\ref{eq:A11}).

Now $D+\frac{1}{\hbar^d}\ad(a)$ induces a derivation $D^{(1)}$ of
$$\K[[u_2,\ldots,u_n,v_2,\ldots,v_n,\hbar]]\widehat{\otimes}_{\K[[\hbar]]}\underline{A}^\wedge_\hbar=
(A^\wedge_\hbar/A^\wedge_\hbar u_1)^{\ad u_1},$$
with $D^{(1)}(\hbar)=\hbar$.

Proceeding in the same way, we get a derivation  $\underline{D}'$ of $\underline{A}^\wedge_\hbar$
with $\underline{D}'(\hbar)=\hbar$.

Now suppose that $\underline{D}$ is a derivation of $\underline{A}^\wedge_\hbar$ such that
$D(\hbar)=\alpha \hbar,\alpha\in \K$. We can uniquely
extend $\underline{D}$ to $A^\wedge_\hbar$ by requiring that $\underline{D}$
acts on $T_yY$ by $\frac{\alpha d}{2}\operatorname{id}$.

\begin{lemma}\label{Lem:A5}
There is a derivation $\underline{D}$ of $\underline{A}^\wedge_\hbar$ such that $\underline{D}(\hbar)=\hbar$
and $D-\underline{D}=\frac{1}{\hbar^d}\ad(a)$ for some element $a\in A^\wedge_\hbar$.
\end{lemma}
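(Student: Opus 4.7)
The plan is to take $\underline{D} := \underline{D}'$, the derivation of $\underline{A}^\wedge_\hbar$ built by the iterative procedure in Step 5, which already satisfies $\underline{D}(\hbar) = \hbar$. I then extend $\underline{D}$ to a derivation $\widetilde{\underline{D}}$ of $A^\wedge_\hbar$ via the canonical rule $\widetilde{\underline{D}}|_{T_yY} = \tfrac{d}{2}\operatorname{id}$ described just before the lemma. The goal is to verify that $E := D - \widetilde{\underline{D}}$ equals $\tfrac{1}{\hbar^d}\ad(a)$ for a suitable $a \in A^\wedge_\hbar$. Both $D$ and $\widetilde{\underline{D}}$ act as $\id$ on $\hbar$ and (modulo an inner correction on $D$ that will be absorbed into $a$) as $\tfrac{d}{2}\operatorname{id}$ on $T_yY$; consequently $E$ annihilates the Weyl factor $\W^\wedge_\hbar$, which already forces any candidate $a$ satisfying $\tfrac{1}{\hbar^d}\ad(a) = E$ to centralize $\W^\wedge_\hbar$, i.e., to lie in $\underline{A}^\wedge_\hbar$.

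The Step 5 construction produces an accumulated inner element $b = a_1 + \cdots + a_k \in A^\wedge_\hbar$ such that $\underline{D}'|_{\underline{A}^\wedge_\hbar} = D|_{\underline{A}^\wedge_\hbar} + \tfrac{1}{\hbar^d}\ad(b)|_{\underline{A}^\wedge_\hbar}$, so $E|_{\underline{A}^\wedge_\hbar} = -\tfrac{1}{\hbar^d}\ad(b)|_{\underline{A}^\wedge_\hbar}$. To promote $b$ to an element actually lying in $\underline{A}^\wedge_\hbar$, I invoke the decomposition \eqref{eq:A1} and expand $b = \sum_\alpha p_\alpha \otimes c_\alpha$ with $\{p_\alpha\}$ the standard monomial PBW basis $u^Iv^J$ of $\W^\wedge_\hbar$ and $c_\alpha \in \underline{A}^\wedge_\hbar$. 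For any $x \in \underline{A}^\wedge_\hbar$ we have $[b, x] = \sum_\alpha p_\alpha \otimes [c_\alpha, x]$ since each $p_\alpha$ centralizes $\underline{A}^\wedge_\hbar$; because $E$ preserves $\underline{A}^\wedge_\hbar$, this commutator must lie in $1 \otimes \underline{A}^\wedge_\hbar$, and uniqueness of the monomial expansion forces $[c_\alpha, x] = 0$ for every $\alpha \neq 0$ and every $x$. Thus $c_\alpha$ is central in $\underline{A}^\wedge_\hbar$ for all $\alpha \neq 0$, and $\ad(b)|_{\underline{A}^\wedge_\hbar} = \ad(c_0)|_{\underline{A}^\wedge_\hbar}$; I set $a := -c_0 \in \underline{A}^\wedge_\hbar$.

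It remains to check that $\tfrac{1}{\hbar^d}\ad(a) = E$ on all of $A^\wedge_\hbar$. By construction the two derivations agree on $\underline{A}^\wedge_\hbar$, and both annihilate $\W^\wedge_\hbar$ (the left side because $a$ centralizes $\W^\wedge_\hbar$, the right side as observed above). Since $\W^\wedge_\hbar$ and $\underline{A}^\wedge_\hbar$ generate $A^\wedge_\hbar$ as a $\K[[\hbar]]$-algebra by \eqref{eq:A1}, two derivations that agree on both factors must coincide, giving the desired identity.

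The main technical obstacle I anticipate is the uniqueness of the monomial expansion step: I need the completed tensor product $\W^\wedge_\hbar \widehat{\otimes}_{\K[[\hbar]]} \underline{A}^\wedge_\hbar$ to behave like a free left $\underline{A}^\wedge_\hbar$-module on the basis $\{u^Iv^J\}$ with respect to the $\widetilde{\m}_y$-adic topology, so that the vanishing of $\sum_\alpha p_\alpha \otimes [c_\alpha, x]$ can be read off term by term. This should follow from a PBW-type statement for $\W^\wedge_\hbar$ combined with the flatness and Artin-Rees properties of Lemma \ref{Lem:Rees}, but care is required in the completed setting. A secondary issue is verifying that $D$ indeed acts as $\tfrac{d}{2}\operatorname{id}$ on $T_yY$ under the chosen embedding $\W^\wedge_\hbar \hookrightarrow A^\wedge_\hbar$; any discrepancy can be absorbed either by a homogeneous refinement of the embedding or by folding the correction into the iterative choice of the $a_i$'s, ultimately enlarging $b$ (and hence $c_0$) without changing the shape of the argument.
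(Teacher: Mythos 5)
Your plan hinges on the claim that with $\underline{D}:=\underline{D}'$ the difference $E:=D-\underline{D}'$ (after the canonical extension) is already of the form $\frac{1}{\hbar^d}\ad(a)$, and you justify this by asserting (i) that Step 5 produces a single element $b=a_1+\cdots+a_k\in A^\wedge_\hbar$ with $\underline{D}'|_{\underline{A}^\wedge_\hbar}=D|_{\underline{A}^\wedge_\hbar}+\frac{1}{\hbar^d}\ad(b)|_{\underline{A}^\wedge_\hbar}$, and (ii) that $E$ preserves $\underline{A}^\wedge_\hbar$. Neither is available. In Step 5 only the first correction lives in $A^\wedge_\hbar$; the later ones are elements of the successive subquotients $(A^\wedge_\hbar/A^\wedge_\hbar u_1\cdots)^{\ad}$, and at each stage the corrected derivation is only arranged to kill the $u_i$'s, so there is no element $b\in A^\wedge_\hbar$ for which $D+\frac{1}{\hbar^d}\ad(b)$ is known to preserve $\underline{A}^\wedge_\hbar$ and induce $\underline{D}'$ there. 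Moreover $D$ does not preserve the embedded Weyl factor (the embedding $\W^\wedge_\hbar\hookrightarrow A^\wedge_\hbar$ is not $D$-homogeneous), hence does not preserve its centralizer $\underline{A}^\wedge_\hbar$; so $E$ need not map $\underline{A}^\wedge_\hbar$ to itself, which is the premise your ``central $c_\alpha$'' expansion argument rests on. Finally, your remark that any discrepancy of $D$ on $T_yY$ can be ``folded into the $a_i$'s'' is where the real content sits: once you correct $D_0:=D-\underline{D}'$ by inner derivations $\frac{1}{\hbar^d}\ad\bigl(\int D_0(u_1)\,dv_1\bigr)$, $\frac{1}{\hbar^d}\ad\bigl(\int D_0(v_1)\,du_1\bigr)$, etc., so that it annihilates all of $T_yY$, the residual derivation restricts to a $\K[[\hbar]]$-linear derivation $\underline{D}_0$ of $\underline{A}^\wedge_\hbar$, and there is no reason whatsoever for $\underline{D}_0$ to be of the form $\frac{1}{\hbar^d}\ad(c)$ with $c\in\underline{A}^\wedge_\hbar$ (the slice algebra may have outer derivations). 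So the strengthened statement you are actually aiming at, that $\underline{D}'$ itself satisfies the lemma, is both unproven and false in general.

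This is precisely why the lemma leaves $\underline{D}$ unspecified. The paper's proof sets $D_0=D-\underline{D}'$, kills $D_0|_{T_yY}$ by the inner corrections just described, observes that the resulting derivation then maps the centralizer $\underline{A}^\wedge_\hbar$ of $T_yY$ into itself and hence defines a derivation $\underline{D}_0$ with $\underline{D}_0(\hbar)=0$, and takes $\underline{D}:=\underline{D}'+\underline{D}_0$; with this extra correction term the remaining check (extending $\underline{D}$ by $\frac{d}{2}\operatorname{id}$ on $T_yY$ and comparing on the two factors of \eqref{eq:A1}) goes through as in your outline.
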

\begin{proof}
Set $D_0:=D-\underline{D}'$. This is a $\K[[\hbar]]$-bilinear derivation of $A^\wedge_\hbar$.
Recall a basis $u_1,\ldots,u_k,v_1,\ldots,v_k\in T_yY$. Replacing $D_0$ with $D_0+\frac{1}{\hbar^d}\ad(\int D_0(u_1)dv_1)$ we get $D_0(u_1)=0$. Therefore $D_0(v_1)$ commutes with $u_1$. In other words,
$$D_0(v_1)\in \K[[u_1,\ldots,u_n,v_2,\ldots,v_n,\hbar]]\widehat{\otimes}_{\K[[\hbar]]}\underline{A}^\wedge_\hbar.$$
So replacing $D_0$ with $D_0-\frac{1}{\hbar^d}\ad(\int D_0(v_1)du_1)$ we get $D_0(u_1)=D_0(v_1)=0$. Proceeding
in the same way, we get $D_0|_{T_yY}=0$. Hence $D_0(\underline{A}^\wedge_\hbar)$ commutes
with $T_yY$. Therefore $D_0$ is induced by  a derivation
$\underline{D}_0$ of $\underline{A}^\wedge_\hbar$ such that $\underline{D}_0(\hbar)=0$.
We can put $\underline{D}:=\underline{D}'+\underline{D}_0$.
\end{proof}

We see that an $\hbar$-saturated ideal $J_\hbar$ is $D$-stable if and only if
$\underline{J}_\hbar$ is $\underline{D}$-stable.

{\it Step 6.} Let $\underline{\Prim}_{fin}$ denote the set of all prime $\underline{D}$-stable
$\hbar$-saturated ideals $\underline{J}_\hbar\subset \underline{A}^\wedge_\hbar$
such that the $\K[[\hbar]]$-module $\underline{A}_\hbar^\wedge/\underline{J}_\hbar$
is of finite rank.

Consider a map $\underline{\Prim}_{fin}\rightarrow \Prim_Y$ given by $\underline{J}_\hbar\mapsto J_\hbar^{fin}$, where,
as on Step 4, $J_\hbar:=\W^\wedge_\hbar\widehat{\otimes}_{\K[[\hbar]]}\underline{J}_\hbar$.
We claim that this map is surjective.

First of all, let us note that $\underline{J}_\hbar\mapsto J_\hbar$ gives
a bijection between $\underline{\Prim}$ and the set $\Prim^\wedge_Y$ of all prime $D$-stable
$\hbar$-saturated ideals $J_\hbar\subset A^\wedge_\hbar$. Now the claim that
the map in consideration is surjective stems from the following lemma.

\begin{lemma}\label{Lem:A2}
Let $I_\hbar\subset \Prim_Y$. Then any minimal prime ideal  $J_\hbar$ of $I^\wedge_\hbar$
lies in $\Prim^\wedge_Y$ and $J^{ fin}_\hbar=I_\hbar$.
\end{lemma}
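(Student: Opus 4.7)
The plan is to establish three properties of the minimal prime $J_\hbar$ — $\hbar$-saturation, $D$-stability, and the equality $J_\hbar^{fin} = I_\hbar$ — since primality is already part of the hypothesis.

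For $\hbar$-saturation, I would start from the fact that $I_\hbar$ is $\hbar$-saturated, so $A_\hbar/I_\hbar$ is $\K[\hbar]$-flat, and hence by Lemma \ref{Lem:Rees}(4) the completion $A_\hbar^\wedge/I_\hbar^\wedge$ is $\K[[\hbar]]$-flat. Therefore $\hbar$ is a non-zero-divisor in $A_\hbar^\wedge/I_\hbar^\wedge$, so it lies in no associated prime of $(0)$ and in particular in no minimal prime; thus $\hbar \notin J_\hbar$, and since $J_\hbar$ is prime this gives $\hbar$-saturation. For $D$-stability, I would use that $D$ descends to a derivation of the Noetherian ring $A_\hbar^\wedge/I_\hbar^\wedge$ (the completion $I_\hbar^\wedge$ is $D$-stable, as noted in Step 2) together with the (noncommutative) characteristic-zero extension of Seidenberg's theorem: any derivation of a Noetherian ring preserves every minimal prime. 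This yields $D(J_\hbar) \subseteq J_\hbar$.

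For the equality $J_\hbar^{fin} = I_\hbar$, the inclusion $J_\hbar^{fin} \supseteq I_\hbar$ is automatic. Note that $J_\hbar^{fin}$ is prime (preimage of a prime), $\hbar$-saturated, and $D$-stable because the homomorphism $A_\hbar \to A_\hbar^\wedge$ intertwines the two copies of $D$; consequently it is also homogeneous by the standard Vandermonde argument applied to the eigenspaces of the grading derivation. Now set $B := A_\hbar/I_\hbar$, a Noetherian domain and a finitely generated $\K$-algebra (since $A_0$ is a finite $Z$-module and $Z$ is a finitely generated $\K$-algebra, so that $A_\hbar$ is a finitely generated $\K$-algebra with extra generator $\hbar$). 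Being excellent, its completion at the maximal ideal corresponding to $y$ is equidimensional, so every minimal prime of $A_\hbar^\wedge/I_\hbar^\wedge = B^\wedge$ — including $J_\hbar/I_\hbar^\wedge$ — has dimension equal to $\dim B$. On the other hand, the injection $A_\hbar/J_\hbar^{fin} \hookrightarrow A_\hbar^\wedge/J_\hbar$ implies $\dim(A_\hbar/J_\hbar^{fin}) \geq \dim(A_\hbar^\wedge/J_\hbar) = \dim B$, while $A_\hbar/J_\hbar^{fin}$ being a prime quotient of $B$ gives $\dim(A_\hbar/J_\hbar^{fin}) \leq \dim B$. Equality of dimensions forces $J_\hbar^{fin}/I_\hbar = 0$ in the domain $B$, i.e., $J_\hbar^{fin} = I_\hbar$.

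The main obstacle will be adapting these dimension-theoretic arguments to the (possibly) noncommutative setting: one should work throughout with Gelfand-Kirillov dimension (as introduced in Step 1) rather than Krull dimension, and transfer the equidimensionality of the excellent completion of the commutative skeleton to the corresponding statement for the noncommutative algebras $A_\hbar^\wedge/J_\hbar$. A secondary technical point is the noncommutative Seidenberg-type theorem used for $D$-stability, but this can be bypassed using continuity of $D$ and the finiteness of the set of minimal primes, on which $D$ acts by a formal one-parameter group that must fix each element.
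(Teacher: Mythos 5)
Your treatment of the two easy points is fine: $\hbar$-saturation of $J_\hbar$ does follow from $\K[[\hbar]]$-flatness of $A_\hbar^\wedge/I_\hbar^\wedge$ (Lemma \ref{Lem:Rees}), and $D$-stability of minimal primes over a $D$-stable ideal is the standard characteristic-zero fact that the paper simply asserts. The genuine gap is in the core of the lemma, the identity $J_\hbar^{fin}=I_\hbar$ (and, implicitly, the fact that $\VA(A_\hbar^\wedge/J_\hbar)=Y^\wedge$, which is what is actually needed downstream in Step 6). Your argument rests on transferring equidimensionality of completions of excellent rings to the noncommutative completion, and you yourself defer this as ``the main obstacle'' without resolving it; but no purely commutative transfer is available. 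Excellence controls the analytic branches of the commutative support $\VA(A_\hbar/I_\hbar)$ at $y$, whereas a minimal prime $J_\hbar$ of $I_\hbar^\wedge$ in the noncommutative algebra $A_\hbar^\wedge$ only gives a nonempty closed subscheme $\VA(A_\hbar^\wedge/J_\hbar)\subseteq Y^\wedge$, which a priori need not be a union of branches and could be as small as the closed point. What rules this out in the paper is precisely the symplectic geometry, which your proposal never invokes: $\VA(A_\hbar^\wedge/J_\hbar)$ is a Poisson subscheme (because $[Z_\hbar,A_\hbar]\subset\hbar^d A_\hbar$) of $Y^\wedge$, and since $Y^\wedge$ is a formal symplectic leaf, any nonempty closed Poisson subscheme of it is all of $Y^\wedge$. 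This use of the leaf hypothesis is the heart of the lemma and cannot be replaced by excellence.

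A second, smaller but real, flaw is the inequality $\dim(A_\hbar/J_\hbar^{fin})\geq\dim(A_\hbar^\wedge/J_\hbar)$ drawn from the injection $A_\hbar/J_\hbar^{fin}\hookrightarrow A_\hbar^\wedge/J_\hbar$: for Gelfand--Kirillov dimension the monotonicity for subalgebras goes the other way (and $\Dim$ of the completed algebra is infinite anyway), while for Krull-type dimensions the claim is simply false for subrings. The correct mechanism is the surjection $(A_\hbar/J_\hbar^{fin})^\wedge\twoheadrightarrow A_\hbar^\wedge/J_\hbar$ (using that $J_\hbar$ is closed), which together with $\VA(A_\hbar^\wedge/J_\hbar)=Y^\wedge$ gives $Y\subseteq\VA(A_\hbar/J_\hbar^{fin})$, hence $\Dim(A_\hbar/J_\hbar^{fin})\geq\dim Y+1=\Dim(A_\hbar/I_\hbar)$, with the reverse inequality from $I_\hbar\subseteq J_\hbar^{fin}$. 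Finally, your closing step ``equal dimension forces equality in the domain $B$'' needs a noncommutative substitute: $A_\hbar/I_\hbar$ is prime, not a domain, and the statement that a proper inclusion of primes strictly decreases $\Dim$ is exactly the Borho--Kraft result (\cite{BK}, Corollary 3.6) that the paper invokes; with that citation this last step is fine, but the two issues above mean the proposal as written does not prove the lemma.
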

\begin{proof}
Clearly, $\VA(A^\wedge_\hbar/J_\hbar)\subset \VA(A^\wedge_\hbar/I^\wedge_\hbar)$. The right hand side coincides
with $Y^\wedge$ by Lemma \ref{Lem:Rees}. On the other hand, $Y^\wedge$ is a  symplectic
leaf, so $\VA(A^\wedge_\hbar/J_\hbar)$ needs to coincide with $Y^\wedge$. Since $I^\wedge_\hbar$
is $D$-stable, so is every its minimal prime ideal.

We have
$I_\hbar\subset J_\hbar^{fin}$ whence $\VA(A_\hbar/J_\hbar^{fin})\subset \VA(A_\hbar/I_\hbar)$.
It follows that $\Dim(A_\hbar/I_\hbar)\geqslant \Dim (A_\hbar/J_\hbar^{fin})$.
On the other hand $Y\subset \VA(A_\hbar/J_\hbar^{fin})$ and $\dim \VA(A_\hbar/I_\hbar)=\dim Y$.
So $\Dim (A_\hbar/I_\hbar)=\Dim (A_\hbar/ J^{fin}_\hbar)$.  
Applying \cite{BK}, Corollar 3.6, we see that $I_\hbar=J^{fin}_\hbar$.
\end{proof}

{\it Step 7.} To complete the proof of assertion (1) of the theorem it is enough to check
that $\overline{\Prim}_{fin}$ is finite.
  By Corollary \ref{fintracepm},
$(\underline{A}^\wedge_\hbar/[\underline{A}^\wedge_\hbar,\underline{A}^\wedge_\hbar])[\hbar^{-1}]$
is finite-dimensional over $\K((\hbar))$. Hence, as in Theorem \ref{th2}, $\underline{A}^\wedge_\hbar[\hbar^{-1}]$ has finitely many finite-dimensional irreducible representations.  To each distinct ($\hbar$-saturated) ideal $\underline{J}_\hbar \in \underline{\Prim}_{fin}$ we can associate a distinct ideal $\underline{J}_\hbar[\hbar^{-1}] \subset \underline{A}^\wedge_\hbar[\hbar^{-1}]$. Each of these finite codimension ideals is the kernel of at least one
 irreducible finite-dimensional representation of $\underline{A}^\wedge_\hbar[\hbar^{-1}]$.  Thus, the set $\overline{\Prim}_{fin}$ must be itself finite.

{\bf Acknowledgements.} The author would like to thanks Pavel Etingof and Alexander Premet
for useful discussions.

\section{Possible generalizations}

In this second appendix, we briefly discuss some possible
generalizations of the preceding appendix which were suggested by
I. Losev (in particular, he suggested questions (1)--(3) below).

One can ask the following more general questions of $A$, which seem to
have positive answers in all known cases:
\begin{enumerate}
\item If $\dim A = \infty$, is it true that $A$ is not residually
finite-dimensional? I.e., is the intersection of the kernels of all finite-dimensional representations of $A$ nontrivial?
\item Is it true that the category $\Rep_{f.d.}(A)$ of finite-dimensional representations of $A$ is equivalent to the category $\Rep_{f.d.}(A')$ for a finite-dimensional algebra $A'$? Equivalently: (i) Does $\Rep_{f.d.}(A)$ have enough projectives?  (ii) Does $A$ have a minimal finite-codimensional ideal?
\item Is $A$ of finite length as an $A$-bimodule?
\end{enumerate}
We claim that (3) $\Rightarrow$ (2) $\Leftrightarrow$ (1). Moreover,
we expect that all three are equivalent in the more general
\emph{deformational} setting of Remark \ref{hbrem}.

First we show the implications $\Rightarrow$.  Indeed,
for a fixed algebra $A$, property (3) implies (2),
since if $A$ has finite length as an $A$-bimodule, then in
particular there is a bound on the codimension of any
finite-codimensional ideal, so the intersection of all
finite-codimensional ideals is the minimal such.  Next, again for a fixed
algebra $A$, (2) implies (1),
since if $A$ has a minimal ideal of finite codimension, then this must
lie in the kernel of every finite-dimensional representation.

Next, we show that (1) $\Rightarrow$ (2). Assume that (1) holds for
all algebras $A$ satisfying the assumptions of the appendix. Take such
an algebra $A$. We claim that the intersection of all
finite-codimensional ideals of $A$ has finite codimension.  Let $J$ be
this intersection.  Consider the new algebra $A' = A/J$.  Then, in
$A'$, the intersection of all finite-codimensional ideals is zero, i.e.,
$A'$ is residually finite-dimensional.  On the other hand, if $Z$ is
the center of $\gr A$, then $\gr A'$ is finite over $Z/(Z \cap \gr
J)$, and the latter is a Poisson subscheme of $\Spec Z$, whose
symplectic leaves are therefore a subset of the symplectic leaves of
$\Spec Z$ itself, and therefore there are finitely many.  Hence, by
(1), $A'$ is itself finite-dimensional, as desired.

Finally, let us consider the analogue of the above questions in the
setting of deformation quantizations $A_\hbar$ as in Remark
\ref{hbrem}. In this case, one is interested in ideals of
$A_\hbar[\hbar^{-1}]$ over $\KK((\hbar))$.  We expect that all three
questions are then equivalent (and specialize to the case of filtered
quantizations $A$ of a graded algebra $A_0$, by letting $A_\hbar$ be
the $\hbar$-adic completion of the Rees algebra of $A$).  Here, we
explain why (2) for such $A_\hbar$ implies (3) for filtered
quantizations. Fix $A_0$ and a filtered quantization $A$.  Let
$A_\hbar$ be the Rees algebra of $A$, and let $Z$ be the center of
$A_0$.  We claim that any strictly decreasing chain of ideals of
$A_\hbar[\hbar^{-1}]$ is finite.  We can replace $A_\hbar$ by its
completion $A^\wedge_\hbar$ at a closed point $y \in \Spec Z$.  By
\eqref{eq:A1}, it is enough to show the claim for the slice algebra
$\underline{A}^\wedge_\hbar$.  As above, every ideal of
$\underline{A}^\wedge_\hbar[\hbar^{-1}]$ has finite codimension over
$\KK((\hbar))$ (its associated graded has zero-dimensional support, at
the unique closed point of $\Spec
Z(\underline{A}^\wedge_\hbar[\hbar^{-1}])$ over
$\KK((\hbar))$). Hence, it follows from (2) that the chain is finite.

\bibliographystyle{amsalpha}
\bibliography{master}

\providecommand{\bysame}{\leavevmode\hbox to3em{\hrulefill}\thinspace}
\providecommand{\MR}{\relax\ifhmode\unskip\space\fi MR }
\providecommand{\MRhref}[2]{%
  \href{http://www.ams.org/mathscinet-getitem?mr=#1}{#2}
}
\providecommand{\href}[2]{#2}
\begin{thebibliography}{RVW96}

\bibitem[AF03]{AlFa}
J.~Alev and D.~Farkas, \emph{Finite group actions on {P}oisson algebras}, The
  orbit method in geometry and physics (Marseille, 2000) (Boston, MA), Progr.
  Math., vol. 213, Birkh\"auser Boston, 2003, pp.~9--28.

\bibitem[AL98]{AL}
J.~Alev and T.~Lambre, \emph{Comparaison de l'homologie de {H}ochschild et de
  l'homologie de {P}oisson pour une d{\'e}formation des surfaces de {K}lein},
  Algebra and operator theory ({T}ashkent, 1997) (Dordrecht), Kluwer Acad.
  Publ., 1998, pp.~25--38.

\bibitem[BEG04]{BEG}
Y.~Berest, P.~Etingof, and V.~Ginzburg, \emph{Morita equivalence of {C}herednik
  algebras}, J. Reine Angew. Math. \textbf{568} (2004), 81--98,
  arXiv:math/0207295.

\bibitem[Bj{\"o}93]{Bj}
J.-E. Bj{\"o}rk, \emph{Analytic {$\mathcal{D}$}-modules and applications},
  Mathematics and Its Applications, Kluwer Academic Publishers, 1993.

\bibitem[BK76]{BK}
W.~Borho and W.~Kraft, \emph{{\"U}ber die {G}elfand-{K}irillov-{D}imension},
  Math. Ann. \textbf{220} (1976), 1--24.

\bibitem[Bry88]{Br}
J.-L. Brylinski, \emph{A differential complex for {P}oisson manifolds}, J.
  Differential Geom. \textbf{28} (1988), no.~1, 93--114.

\bibitem[EG02]{EGch}
P.~Etingof and V.~Ginzburg, \emph{Symplectic reflection algebras,
  {C}alogero-{M}oser space, and deformed {H}arish-{C}handra homomorphism},
  Invent. Math. \textbf{147} (2002), no.~2, 243--348.

\bibitem[Eis95]{Eisca}
D.~Eisenbud, \emph{Commutative algebra}, Graduate Texts in Mathematics, vol.
  150, Springer-Verlag, New York, 1995, With a view toward algebraic geometry.

\bibitem[Hel78]{He}
S.~Helgason, \emph{Differential geometry, {L}ie groups and symmetric spaces},
  Academic Press, New York, 1978.

\bibitem[Kal03]{Kalnpa}
D.~Kaledin, \emph{Normalisation of a {P}oisson algebra is {P}oisson},
  arXiv:math/0310173, 2003.

\bibitem[Kal06]{Kalss}
\bysame, \emph{Symplectic singularities from the {P}oisson point of view}, J.
  Reine Angew. Math. \textbf{600} (2006), 135--156. \MR{MR2283801
  (2007j:32030)}

\bibitem[Kas03]{Kash}
M.~Kashiwara, \emph{{$D$}-modules and microlocal calculus}, Translations of
  Mathematical Monographs, vol. 217, Amer. Math. Soc., Providence, RI, 2003,
  translated from the 2000 Japanese original.

\bibitem[Los08]{Losfdr}
I.~Losev, \emph{Finite dimensional representations of {W}-algebras},
  arXiv:0807.1023, 2008.

\bibitem[Los09]{Los1dr}
\bysame, \emph{{$1$}-dimensional representations and parabolic induction for
  {$W$}-algebras}, arXiv:0906.0157, 2009.

\bibitem[Los10]{Losqsa}
\bysame, \emph{Quantized symplectic actions and {$W$}-algebras}, J. Amer. Math.
  Soc. \textbf{23} (2010), no.~1, 35--59.

\bibitem[MR01]{MCR}
J.~C. McConnell and J.~C. Robson, \emph{Noncommutative noetherian rings}, 2nd
  ed., GTM 30, Amer. Math. Soc., 2001, with the cooperation of L. W. Small.

\bibitem[RVW96]{RVWsg}
N.~Reshetikhin, A.~Voronov, and A.~Weinstein, \emph{Semiquantum geometry}, J.
  Math. Sci. \textbf{82} (1996), no.~1, 3255--3267, Algebraic geometry, 5.

\bibitem[Wei83]{We}
A.~Weinstein, \emph{The local structure of {P}oisson manifolds}, J.
  Differential Geom. \textbf{18} (1983), no.~3, 523--557.

\end{thebibliography}
\end{document}